\documentclass[american,12pt]{amsart}

\usepackage{type1cm} 
\usepackage{geometry}
\usepackage{babel}
\usepackage{graphicx}
\usepackage{cases}
\usepackage{mathtools}
\usepackage{amsthm}
\usepackage{amssymb}
\usepackage{tikz}
\usepackage{xcolor}
\usepackage{dashbox}
\usepackage{skak}
\usepackage{bm}  
\usepackage{verbatim}
\usepackage{enumitem}
\setlist{itemsep=0pt,topsep=0pt,parsep=1pt,partopsep=0pt}

\usepackage[noadjust]{cite}
\usepackage{xurl}

\definecolor{darkblue}{rgb}{0, 0, .6}
\definecolor{grey}{rgb}{.7, .7, .7}

\makeatletter

\numberwithin{equation}{section}
\numberwithin{figure}{section}
\numberwithin{table}{section}
\theoremstyle{definition}
\newtheorem{thm}{\protect\theoremname}[section]
\theoremstyle{definition}
\newtheorem{example}[thm]{\protect\examplename}
\newtheorem{prop}[thm]{\protect\propositionname}
\newtheorem{conjecture}[thm]{\protect\conjecturename}
\newtheorem{rem}[thm]{\protect\remarkname}
\newtheorem{corollary}[thm]{\protect\corollaryname}
\newtheorem{lemma}[thm]{Lemma}

\usepackage{pgf,tikz}
\tikzstyle{vert} = [circle, draw, fill=grey!20,inner sep=0pt, minimum size=5mm]
\tikzstyle{med vert} = [circle, draw, fill=grey!20,inner sep=0pt, minimum size=4mm]
\tikzstyle{small vert} = [circle, draw, fill=grey!20,inner sep=0pt, minimum size=3mm]
\tikzstyle{b} = [draw, very thick, black,-]
\tikzstyle{a} = [draw, black,-stealth]

\usetikzlibrary{arrows,backgrounds,calc,trees}
\usetikzlibrary{arrows.meta,positioning,decorations.pathmorphing}

\pgfdeclarelayer{background}
\pgfsetlayers{background,main}

\DeclareRobustCommand{\cc}[1]{%
\begin{tikzpicture}
\node[draw,circle,red,inner sep=.7] {$\scriptscriptstyle #1$};
\end{tikzpicture}%
}

\newcommand{\convexpath}[2]{
[   
    create hullnodes/.code={
        \global\edef\namelist{#1}
        \foreach [count=\counter] \nodename in \namelist {
            \global\edef\numberofnodes{\counter}
            \node at (\nodename) [draw=none,name=hullnode\counter] {};
        }
        \node at (hullnode\numberofnodes) [name=hullnode0,draw=none] {};
        \pgfmathtruncatemacro\lastnumber{\numberofnodes+1}
        \node at (hullnode1) [name=hullnode\lastnumber,draw=none] {};
    },
    create hullnodes
]
($(hullnode1)!#2!-90:(hullnode0)$)
\foreach [
    evaluate=\currentnode as \previousnode using \currentnode-1,
    evaluate=\currentnode as \nextnode using \currentnode+1
    ] \currentnode in {1,...,\numberofnodes} {
  let
    \p1 = ($(hullnode\currentnode)!#2!-90:(hullnode\previousnode)$),
    \p2 = ($(hullnode\currentnode)!#2!90:(hullnode\nextnode)$),
    \p3 = ($(\p1) - (hullnode\currentnode)$),
    \n1 = {atan2(\y3,\x3)},
    \p4 = ($(\p2) - (hullnode\currentnode)$),
    \n2 = {atan2(\y4,\x4)},
    \n{delta} = {-Mod(\n1-\n2,360)}
  in 
    {-- (\p1) arc[start angle=\n1, delta angle=\n{delta}, radius=#2] -- (\p2)}
}
-- cycle
}

\tikzset{
    my box/.style = {
        , line cap = round
        , line join = round
    }
}

\usetikzlibrary{arrows}
\usetikzlibrary{shapes}
\usetikzlibrary{patterns}

\renewcommand*\env@cases[1][1]{%
  \let\@ifnextchar\new@ifnextchar
  \left\lbrace
  \def\arraystretch{#1}%
  \array{@{}l@{\quad}l@{}}%
}

\makeatother

\providecommand{\conjecturename}{Conjecture}
\providecommand{\examplename}{Example}
\providecommand{\propositionname}{Proposition}
\providecommand{\remarkname}{Remark}
\providecommand{\theoremname}{Theorem}
\providecommand{\corollaryname}{Corollary}

\global\long\def\TER{\text{TER}}%
\global\long\def\DNT{\text{DNT}}%
\global\long\def\dawsonschess{\text{DC}}%
\global\long\def\SPL{\text{SPL}}%

\DeclareMathOperator{\Wd}{Wd}
\global\long\def\Opt{\text{Opt}}%
\global\long\def\mex{\text{mex}}%
\global\long\def\nim{\text{nim}}%
\global\long\def\pty{\text{pty}}%
\global\long\def\Tr{\mathrm{Tr}}%

\newcommand{\m}[1]{\{\!\!\{ #1 \}\!\!\}}

\tikzstyle{small vert} = [circle, draw, fill=grey!60,inner sep=0pt, minimum size=3mm]
\tikzstyle{b} = [draw, very thick, black,-]
\tikzstyle{a} = [draw, black,-stealth]

\begin{document}

\title[Impartial Geodetic Removing Games on Graphs]{Impartial Geodetic Removing Games on Graphs}

\author{Bret J.~Benesh}
\address{
Department of Mathematics,
College of Saint Benedict and Saint John's University,
37 College Avenue South,
Saint Joseph, MN 56374-5011, USA
}
\email{bbenesh@csbsju.edu}

\author{Dana C.~Ernst}
\address{
Department of Mathematics and Statistics,
Northern Arizona University PO Box 5717,
Flag\-staff, AZ 86011-5717, USA
}
\email{Dana.Ernst@nau.edu, Nandor.Sieben@nau.edu}

\author{Marie Meyer}
\address{
Department of Mathematics and Statistics,
University of South Florida,
4202 E. Fowler Avenue,
Tampa, FL 33620, USA
}
\email{mariemeyer@usf.edu}

\author{Sarah K.~Salmon}
\address{
Department of Mathematics,
University of Colorado Boulder
Campus Box 395,
2300 Colorado Avenue,
Boulder, CO 80309, USA}
\email{Sarah.Salmon@colorado.edu}

\author{N\'andor Sieben}

\thanks{Date: \the\month/\the\day/\the\year}

\keywords{impartial hypergraph game, convex hull, case analysis diagram, ordinal sum}

\subjclass[2010]{91A46, 52A01, 52B40}

\begin{abstract}
A subset of the vertex set of a graph is geodetically convex if it contains every vertex on any shortest path between two elements of the subset. The convex hull of a set of vertices is the smallest convex set containing the set. 
We study two games in which two players take turns selecting vertices of a graph until the convex hull of the jointly unselected vertices is too small. The last player to move is the winner. The achievement game ends when the convex hull of the jointly unselected vertices is not the vertex set. In the avoidance game, the convex hull of the jointly unselected vertices must always be the vertex set.  We study the nim-values for several graph families, including cycle graphs, hypercube graphs, complete multipartite graphs, wheel graphs, generalized wheel graphs, and graphs with a unique minimal generating set. 
\end{abstract}

\maketitle

\section{Introduction}

Given a subset of vertices $S$ in a finite simple graph, the set of vertices lying on any shortest path between elements of $S$ is known as the geodetic closure of $S$. This concept formed the basis for a pair of impartial combinatorial games introduced by Harary~\cite{HARARY1984323}. In these achievement and avoidance building games, two players alternately select unselected vertices, updating the geodetic closure of their joint selections at each turn. 
The achievement game ends as soon as the geodetic closure becomes the vertex set, whereas the avoidance game does not allow the geodetic closure to be equal to the vertex set. The player who is unable to move loses the games.
Variants of geodetic closure building games have been extensively studied across standard graph families, including cycles, wheels, complete multipartite graphs, and split graphs~\cite{BuckleyHarary85,BuckleyHarary86,FraenkelHarary89,HaynesHenningTiller,Necascova,wang17}.

In~\cite{BEMSS2024}, we introduced a variation of the geodetic closure games that uses the convex hull. A vertex subset $S$ is geodetically convex if it contains all vertices along shortest paths between its members. The convex hull of $S$ is the smallest geodetically convex set containing $S$. Despite its name, the geodetic closure is only a pre-closure, while the convex hull is a closure operator. Although the convex hull and geodetic closure operators are identical for various graph families, they diverge on others. While most of the results in the literature have restricted their focus to the outcomes of geodetic closure games, we determined the nim-values of convex hull games across multiple graph families.

The geodetic closure and convex hull games described above are examples of building hypergraph games, which were formalized in~\cite{SiebenHypergraph}. In contrast, removing hypergraph games represent a complementary variation of hypergraph games. We initiated a study of removing games played on graphs using the convex hull operator in~\cite{impartialremovinggamesgrid}. For the removing paradigm, players select vertices from a finite graph, steadily shrinking the pool of unchosen vertices until their convex hull becomes too small. Specifically, the achievement game $\TER$ (terminate) is won by the player who is able to select a vertex such that the convex hull of the unselected vertices no longer equals the vertex set.  The avoidance game $\DNT$ (do not terminate) requires the players to keep the convex hull of the unselected vertices equal to the vertex set. In~\cite{impartialremovinggamesgrid}, we determined the nim-value of these games for the family of grid graphs and also provided some results for higher-dimensional lattice graphs. 

In the present paper, we compute nim-values for $\TER$ and $\DNT$ across an array of graph classes, such as complete split, corona, block, cycle, hypercube, complete multipartite, wheel, and generalized wheel graphs. To accomplish this, our analysis employs several combinatorial strategies, including characterizing maximal nonterminating sets, establishing structural equivalences via option-preserving maps, reducing positions to the game of Dawson's Chess, and using case analysis diagrams to describe complex winning strategies.

For a foundational treatment of impartial game theory, readers are directed to~\cite{albert2007lessons,ONAG,SiegelBook}. The remainder of this paper is organized as follows. Section~\ref{sec:Preliminaries} establishes preliminary definitions regarding transversals, impartial games, and geodetic convexity. Section~\ref{sec:RemovingGames} formally defines the $\TER$ and $\DNT$ removing games. In Section~\ref{sec:UniqueMinGenSet}, we resolve the nim-values for graphs possessing a unique minimal generating set. Section~\ref{sec:GraphFamilies} details our nim-value computations for several graph families. Finally, Section~\ref{sec:Closing} highlights open questions and potential avenues for future research.

\section{Preliminaries\label{sec:Preliminaries}}

We recall some terminology and notation.

\subsection{Notation}
If $f:X\to Y$ and $A\subseteq X$, then we often use the standard
$f(A):=\{f(a)\mid a\in A\}$ notation for the image of $A$. 
As a special case, for a family $\mathcal{A}$ of subsets of $X$ we define
\[
\complement(\mathcal{A}):=\{A^c\mid A\in\mathcal{A}\},
\]
where $A^c=X\setminus A$ is the complement of $A$.

We define the \emph{parity of the integer} $k$ as $\pty(k):=k\!\mod2$. The cardinality of a set $A$ is denoted by $|A|$, and we write $\pty(A):=\pty(|A|)$
for the \emph{parity of a set}.

\subsection{Transversals}

Let $\mathcal{A}$ be a family of sets.  A set $T$ is a \emph{transversal of $\mathcal{A}$} if $T \cap A \not= \emptyset$ for all $A \in \mathcal{A}$. We define $\Tr(\mathcal{A})$ to be the set of minimal transversals of $\mathcal{A}$. Transversals are sometimes called blocking sets, in which case $\Tr(\mathcal{A})$ is called the \emph{blocker} of $\mathcal{A}$.

\begin{example} 
One can verify that 
\[
\Tr(\left\{\{1,2\}, \{2,3,4\}, \{2,3,5\}\right\})=\{\{2\},\{1,3\},\{1,4,5\}\}.
\]
The special cases $\Tr( \emptyset ) = \{ \emptyset\}$ and $\Tr( \{ \emptyset \} )= \emptyset$ play important roles.
\end{example}

A family $\mathcal{A}$ of sets is a \emph{Sperner family} or \emph{clutter} if no element of $\mathcal{A}$ contains another element of $\mathcal{A}$. If $\mathcal{A}$ is a Sperner family, then $\Tr(\Tr(\mathcal{A}))=\mathcal{A}$.

\subsection{Impartial games}

Our general references for combinatorial game theory are~\cite{albert2007lessons,SiegelBook}.

In an \emph{impartial game}, two players take turns to replace the current position of the game with one of its options. The game ends when the current position has no options. The player unable to move is the \emph{loser}. Every game must finish in finitely many steps. In particular, no position can be reached twice. We model an impartial game with a \emph{gamegraph}, which is a finite set $\mathsf{G}$ of \emph{positions} and a collection
$\Opt(p)\subseteq\mathsf{G}$ of \emph{options} for each position
$p\in\mathsf{G}$. A gamegraph has a \emph{starting position}, which is a unique position not in the option set of any position.
We visualize gamegraphs with a diagram showing an arrow from a position to every option of that position. Game play is moving from one vertex to another along the arrows. The game ends when a position without options is reached. 

The \emph{minimum excludant} $\mex(A)$ of a set $A$ of non-negative integers is the smallest non-negative integer that is not in $A$. The \emph{nim-value} $\nim(p)$ of a position $p$ is defined recursively as the minimum excludant of the nim-values of the options of $p$. That is,
\[
\nim(p):=\mex(\nim(\Opt(p))).
\]
The \emph{nim-value of the game} is the nim-value of the starting position.

A position is \emph{terminal} if it has no options. A terminal position $p$ has nim-value $\nim(p)=\mex(\nim(\emptyset))=\mex(\emptyset)=0$.
A position $p$ is \emph{losing} for the player about to move ($P$-position) if $\nim(p)=0$ and \emph{winning} ($N$-position) otherwise. The winning strategy is to always move to a losing option with nim-value $0$ if available. 

The \emph{game sum} $\mathsf{G}+\mathsf{H}$ has position set $\mathsf{G}\times\mathsf{H}$ with $\Opt(p,q):=(\Opt(p)\times\{q\})\cup(\{p\}\times\Opt(q))$. A convenient way to show that $\nim(\mathsf{G})=k$ is to find a strategy for the second player to win $\mathsf{G}+*k$, where $*k$ is the nimber with $\Opt(*k)=\{*0,\ldots,*(k-1)\}$. 

We will often use another useful result.

\begin{prop}
\label{prop:all-same general}
\cite[Proposition 2.2]{BEMSS2024}
If every terminal position of an impartial game $\mathsf{G}$ has the same parity $r$, then $\nim(\mathsf{G})=r$.
\end{prop}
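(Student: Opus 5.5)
We argue by strong induction on the positions of $\mathsf{G}$, ordered by the length of the longest play starting from them, which is finite because every game finishes in finitely many steps. The parity of a position is measured relative to the starting position, i.e.\ the parity of the number of moves made to reach it; this is well defined for the terminal positions by hypothesis. The inductive claim is that for every position $p$, $\nim(p)=\pty(r+d(p))$, where $d(p)$ is the number of moves already made. For this to make sense, every play from the start to $p$ must have the same length parity. Hence the cleanest formulation proves, for every position $p$ reachable from the start, that all maximal plays from $p$ have length parity $\pty(r - d)$ for any $d$ realizing $p$. It then follows that $\nim(p)$ equals that common parity.

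\textbf{Base case.} If $p$ is terminal, then every maximal play from $p$ has length $0$, and $\nim(p)=\mex(\emptyset)=0$. This is consistent because the hypothesis forces $\pty(d)=r$ for any $d$ realizing $p$.

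\textbf{Inductive step.} Let $p$ be nonterminal, and suppose every maximal play from $p$ has length of a fixed parity $s$. Every option $q$ of $p$ also has all its maximal plays of one parity, namely $1-s$: each maximal play from $q$ extends to a maximal play from $p$ by prepending the move $p\to q$. By induction, $\nim(q)=1-s$ for every option $q$, so
\[
\nim(p)=\mex\{1-s\}=s,
\]
since $\mex\{1\}=0$ and $\mex\{0\}=1$. The option set is nonempty, so this set is exactly $\{1-s\}$.

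\textbf{Conclusion.} Apply this to the starting position. Every maximal play from it ends at a terminal position, whose length has parity $r$ by hypothesis, so $\nim(\mathsf{G})=r$.

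The only delicate point is the interpretation of ``the parity of a terminal position.'' In the removing games of this paper, a position is naturally the set of selected vertices, and its parity is $\pty$ of that set, which equals the number of moves made, since each move selects one vertex. So the hypothesis states exactly that every complete play has length parity $r$, which is what the induction uses. I expect no real obstacle beyond making this identification explicit.
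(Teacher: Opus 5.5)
Your proof is correct. The paper gives no proof of this statement; it quotes it from \cite[Proposition 2.2]{BEMSS2024}, where positions are sets of selected vertices and parity means $\pty(P)$. The natural argument in that setting is the same alternating induction you run, indexed by set size rather than by play length. Each move adds one element and every terminal position has parity $r$, so induction gives two facts: $\nim(P)=0$ when $\pty(P)=r$, because all options have parity $1-r$ and none of them is terminal; and $\nim(P)=1$ otherwise, because such a $P$ is nonterminal and all its options have parity $r$. Evaluating at $\emptyset$ then gives $r$.

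Your lemma, that $\nim(p)=s$ whenever all maximal plays from $p$ have length parity $s$, is the abstract-gamegraph form of this argument. It buys two things. It does not need positions to be sets. It also makes explicit that parity is measured relative to the starting position, which the set-parity formulation gets only because the games it is applied to, such as $\DNT(G)$, start at $\emptyset$. If a game started at a set of odd size, the true nim-value would be $1-r$.

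Two small remarks. First, the claim in your opening paragraph that every reachable $p$ has all maximal plays of parity $\pty(r-d)$ is never proved, and it is not used; applying the conditional lemma once at the starting position is the whole proof. Second, there is an even shorter route in the spirit of the paper's $\mathsf{G}+*k$ technique. Every complete play of $\mathsf{G}+*r$ with $r\in\{0,1\}$ consists of a complete play of $\mathsf{G}$ together with exactly $r$ moves in $*r$, so its length is congruent to $r+r\equiv 0 \pmod 2$. Hence the second player wins $\mathsf{G}+*r$ regardless of strategy, and $\nim(\mathsf{G})=r$.
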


\subsection{Option-preserving maps}
A function $f:\mathsf{G}\to\mathsf{H}$ between two gamegraphs is \emph{ option-preserving} \cite{Baltushkin,Basic} if $\Opt(f(p))=f(\Opt(p))$ for all $p\in\mathsf{G}$. We use option-preserving maps to study a complicated game through its simpler image. This is possible because of the following result. 

\begin{prop}
\cite[Proposition 4.17]{Basic}
If $f:\mathsf{G}\to\mathsf{H}$ is option-preserving, then $\nim(f(p))=\nim(p)$ for all $p\in\mathsf{G}$.
\end{prop}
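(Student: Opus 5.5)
We prove the statement by strong induction on the positions of $\mathsf{G}$, ordered by the length of the longest play that starts there. Since every play is finite and no position repeats, this length is a well-defined non-negative integer for every position, and it strictly decreases when we pass to an option. So it suffices to show $\nim(f(p))=\nim(p)$ for each $p\in\mathsf{G}$, assuming the equality already holds for every option of $p$.

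\textbf{Base case.} Let $p$ be terminal, so $\Opt(p)=\emptyset$. Option-preservation gives $\Opt(f(p))=f(\emptyset)=\emptyset$, hence $f(p)$ is terminal as well, and
\[
\nim(f(p))=\mex(\emptyset)=0=\nim(p).
\]

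\textbf{Inductive step.} Let $p$ be arbitrary and assume $\nim(f(q))=\nim(q)$ for every $q\in\Opt(p)$. By option-preservation, $\Opt(f(p))=f(\Opt(p))=\{f(q)\mid q\in\Opt(p)\}$. Therefore the set of nim-values of the options of $f(p)$ is
\[
\nim(\Opt(f(p)))=\{\nim(f(q))\mid q\in\Opt(p)\}=\{\nim(q)\mid q\in\Opt(p)\}=\nim(\Opt(p)),
\]
where the middle equality is the inductive hypothesis. Taking the minimum excludant of both sides gives $\nim(f(p))=\nim(p)$.

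\textbf{Well-foundedness on the $\mathsf{H}$ side.} The recursion for $\nim(f(p))$ runs through options of $f(p)$, so we need every option of $f(p)$ to be the image of an option of $p$. This is exactly what the equality $\Opt(f(p))=f(\Opt(p))$ provides, and it is used in the displayed computation above. The reverse direction matters too: nim-values in $\mathsf{H}$ are defined by recursion in $\mathsf{H}$, which is finite and terminating by assumption, so $\nim(f(p))$ is well-defined independently of $f$. The identity in the inductive step then holds because the two $\mex$ arguments are literally equal sets.

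The only subtle point is the choice of induction measure. It must be taken in $\mathsf{G}$ (the longest play from $p$) rather than in $\mathsf{H}$, so that the inductive hypothesis applies to the options $q\in\Opt(p)$. Everything else is direct substitution.
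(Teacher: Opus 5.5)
Your proof is correct and is essentially the standard argument. The paper gives no proof of its own and only cites the result, but the intended proof is exactly this structural induction over $\mathsf{G}$: the equality $\Opt(f(p))=f(\Opt(p))$ makes the set of option nim-values of $f(p)$ equal to that of $p$, so the two $\mex$ values coincide.
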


This implies that if the starting position of $\mathsf{G}$ maps to the starting position of $\mathsf{H}$, then $\nim(\mathsf{G})=\nim(\mathsf{H})$. Note that a surjective option-preserving map always takes starting positions to starting positions.

\subsection{Geodetic convexity}

A graph is an ordered pair $G=(V,E)$, where $V$ is a finite nonempty set of vertices and $E\subseteq 2^V$ is the set of edges. We do not allow loop edges. A \emph{geodesic} of a graph is a shortest path between two vertices.  A set $P$ of vertices of a graph $(V,E)$ is called \emph{geodetically convex} or simply \emph{convex} if it contains all vertices along the geodesics connecting two vertices of $P$. The \emph{convex hull} $[P]:=\bigcap\{K\mid P\subseteq K, K \text{ is convex} \}$ of $P$ is the smallest convex set containing $P$. The convex hull function $P\mapsto[P]:2^V\to 2^V$ is a closure operator. In particular, $[P]$ is convex if and only if $[P]=P$. 
A comprehensive reference about geodetic convexity is \cite{PelayoBook}.

We say that a set $P$ of vertices is \emph{generating} if $[P]=V$. Otherwise, $P$ is called \emph{nongenerating}.
The family of maximal nongenerating sets is denoted by $\mathcal{N}$ while the family of minimal generating sets is denoted by $\mathcal{G}$. 

We say that a set $P$ of vertices is \emph{terminating} if $[P^c]\ne V$. Otherwise, $P$ is called \emph{nonterminating}. The family of maximal nonterminating sets is denoted by $\mathcal{N}^\star$ while the family of minimal terminating sets is denoted by $\mathcal{G}^\star$. 
The generating and terminating sets are Sperner families that are related according to the following result from~\cite[Subsection 2.2]{SiebenHypergraph}.

\begin{prop}
For all graphs, $\mathcal{N}^\star=\complement(\mathcal{G})$, $\mathcal{N}=\complement(\mathcal{G}^\star)$, and $\mathcal{G}^*=\Tr(\mathcal{G})$.
\end{prop}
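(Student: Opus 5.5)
We prove the three identities in turn, using only the definitions of generating and terminating sets together with the stated facts about transversals and Sperner families. The central observation is the tautology $P$ is terminating if and only if $P^c$ is nongenerating. Equivalently, $P$ is nonterminating if and only if $P^c$ is generating.

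For the first identity, note that complementation $P\mapsto P^c$ is an inclusion-reversing bijection of $2^V$. By the tautology it carries the family of nonterminating sets exactly onto the family of generating sets. Inclusion-maximal members of the first family therefore correspond to inclusion-minimal members of the second. This gives $\mathcal{N}^\star=\complement(\mathcal{G})$. Applying the same reasoning to terminating versus nongenerating sets, minimal terminating sets correspond to maximal nongenerating sets. Hence $\complement(\mathcal{G}^\star)=\mathcal{N}$, and the second identity follows since complementation is an involution.

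For the third identity, first characterize terminating sets as transversals of $\mathcal{G}$. Suppose $P$ is terminating. Then $P^c$ is nongenerating, so no generating set, in particular no $G\in\mathcal{G}$, is contained in $P^c$. Thus every $G\in\mathcal{G}$ meets $P$, and $P$ is a transversal of $\mathcal{G}$.

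Conversely, suppose $P$ meets every $G\in\mathcal{G}$. If $P^c$ were generating, it would contain a minimal generating set $G$, because $V$ is finite and generating is upward closed ($[\cdot]$ is monotone). That $G$ would be disjoint from $P$, which is a contradiction. So $P^c$ is nongenerating and $P$ is terminating.

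Hence the terminating sets are exactly the transversals of $\mathcal{G}$. Taking inclusion-minimal members on both sides gives $\mathcal{G}^\star=\Tr(\mathcal{G})$.

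The only step requiring care is the upward-closure fact that every generating set contains a member of $\mathcal{G}$. This follows from monotonicity of the closure operator and finiteness of $V$. We should also check the degenerate cases, such as $\mathcal{G}=\{\emptyset\}$, which does not occur since $V\neq\emptyset$ forces $[\emptyset]=\emptyset\neq V$. These cases are consistent with the conventions $\Tr(\emptyset)=\{\emptyset\}$ and $\Tr(\{\emptyset\})=\emptyset$.
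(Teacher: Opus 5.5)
Your proof is correct. The paper gives no argument of its own here; it imports the three identities from Subsection~2.2 of \cite{SiebenHypergraph}, where they belong to the general framework of hypergraph games.

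Your route is a self-contained verification from the definitions:
\begin{itemize}
\item the complementation bijection $P\mapsto P^c$ turns ``nonterminating'' into ``generating'' and ``terminating'' into ``nongenerating'', which gives the first two identities;
\item the characterization ``terminating $=$ transversal of $\mathcal{G}$'' gives the third.
\end{itemize}
Your version makes explicit what the citation leaves implicit: nothing about geodesics is used. You only need that $V$ is finite and that generating sets are upward closed, i.e., $[\cdot]$ is monotone. That is exactly why the statement holds for all graphs.

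One small bookkeeping point concerns where each hypothesis enters.
\begin{itemize}
\item Monotonicity is what the \emph{forward} direction needs, and you use it there silently: a generating $G\subseteq P^c$ would force $[P^c]\supseteq[G]=V$.
\item The \emph{converse} needs only finiteness, although you attribute it partly to upward closure. A minimal generating subset of a generating $P^c$ is automatically minimal among all generating sets, so it lies in $\mathcal{G}$.
\end{itemize}
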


\begin{rem}\label{remark:transversal complement voodoo}
The relationships can be summarized by the following diagram:
\[
\mathcal{N}^\star
\overset{\complement}{\longleftrightarrow} 
\mathcal{G}
\overset{\Tr}{\longleftrightarrow}
\mathcal{G}^\star
\overset{\complement}{\longleftrightarrow}
\mathcal{N}
\]
\end{rem}

\begin{example}\label{ex:kite example}
The kite graph $G$ with its generating and terminating sets is shown in Figure~\ref{fig:kite}. 
\end{example}

\begin{figure}[h]

\begin{tikzpicture}[scale=1,auto,rotate=45,baseline=5mm]
\node (4) at (0,0) [ vert] {\scriptsize $v$};
\node (2) at (1,0) [ vert 
] {\scriptsize $y$};
\node (3) at (1,1) [ vert] {\scriptsize $u$};
\node (5) at (0,1) [ vert] {\scriptsize $w$};
\node (1) at (1.707106,-0.707106) [ vert] {\scriptsize $x$};
\path [b] (4) to (2) to (3) to (5) to (4);
\path [b] (1) to (2);
\end{tikzpicture} 
$\qquad
\begin{aligned}
\mathcal{N}   &= \{\{x,y,u\},\{x,y,v\},\{y,u,v,w\}\} \\
\mathcal{G}^* &= \{\{v,w\},\{u,w\},\{x\}\} \\
\mathcal{G}   &= \{ \{x,w\},\{x,u,v\} \} \\
\mathcal{N}^* &= \{\{y,u,v\},\{y,w\}\} 
\end{aligned}
$

\caption{
\label{fig:kite}
A graph with its generating and terminating sets. 
}
\end{figure}

\section{Removing games}\label{sec:RemovingGames}

Our goal is to study two \emph{impartial removing hypergraph games} \cite{SiebenHypergraph}. We play both games on a graph $G$ with vertex set $V$ and edge set $E$. Two players take turns selecting previously unselected vertices of $G$ until certain conditions are met. Each game position is the set $P$ of jointly selected vertices.

The achievement game \emph{terminate} $\TER(G)$ ends as soon as the set of unchosen vertices of $G$ no longer generates $V$. So the player who first removes a vertex that prevents generating the whole vertex set wins. This happens as soon as $P$ is terminating. That is, $G\subseteq P$ for some $G\in\mathcal{G}^*$.

In the avoidance game \emph{do not terminate} $\DNT(G)$, each position $P$ must be nonterminating. That is, $P\subseteq N$ for some $N\in\mathcal{N}^*$. The game ends if no additional vertex can be selected while maintaining this condition. 

\begin{example}
The gamegraphs for the path graph $P_3$ with $V=\{u,v,w\}$ are shown in Figure~\ref{fig:P3}. Note that $\mathcal{G}^*=\{\{u\},\{w\}\}$ and $\mathcal{N}^\star=\{\{v\}\}$.
\end{example}

\begin{figure}[h!]

\begin{tabular}{cccc}
\begin{tabular}{c}
\begin{tikzpicture}[scale=.8,auto]
\node (u) at (1,0) [ vert ] {\scriptsize $u$};
\node (v) at (2,0) [ vert] {\scriptsize $v$};
\node (w) at (3,0) [ vert] {\scriptsize $w$};
\path [b] (u) to (v) to (w) ;
\end{tikzpicture} 
\end{tabular} & %
 & %
\begin{tabular}{c}
\begin{tikzpicture}[yscale=.9]
\node[label={[label distance=-4pt]left:{\scriptsize $\color{cyan}{*1}$}}] (0) at (1,3) {$\emptyset$};
\node[label={[label distance=-4pt]left:{\scriptsize $\color{cyan}{*0}$}}] (1) at (1,2) {$\{v\}$};
\draw[->] (0) -- (1);
\end{tikzpicture}
\end{tabular} & %
\begin{tabular}{c}
\begin{tikzpicture}[xscale=2,yscale=.9]
\node[label={[label distance=-4pt]left:{\scriptsize $\color{cyan}{*2}$}}] (0) at (1,3) {$\emptyset$};
\node[label={[label distance=-4pt]left:{\scriptsize $\color{cyan}{*1}$}}] (00) at (1,2) {$\{v\}$};
\node[label={[label distance=-4pt]left:{\scriptsize $\color{cyan}{*0}$}}] 
(1) at (0,2) {$\{u\}$}; 
\node[label={[label distance=-4pt]left:{\scriptsize $\color{cyan}{*0}$}}] (2) at (2,2) {$\{w\}$}; 
\node[label={[label distance=-4pt]left:{\scriptsize $\color{cyan}{*0}$}}] (3) at (0,1) {$\{u,v\}$}; 
\node[label={[label distance=-4pt]left:{\scriptsize $\color{cyan}{*0}$}}] (4) at (2,1) {$\{v,w\}$}; 
\draw[->] (0) -- (00);
\draw[->] (0) -- (1);
\draw[->] (0) -- (2);
\draw[->] (00) -- (3);
\draw[->] (00) -- (4);
\end{tikzpicture}
\end{tabular}\\
$P_3$ &  & $\DNT(P_3)$ & $\TER(P_3)$ 
\end{tabular}

\caption{
\label{fig:P3} 
Gamegraphs with nim-values for $P_3$.
}
\end{figure}

\begin{example}
\label{ex:single vertex}
Let $G$ be the graph with a single vertex $v$, so that $\mathcal{G}^*=\{\{v\}\}$ and $\mathcal{N}^*=\{\emptyset\}$. The nim-value of $\DNT(G)$ is $0$ since the only position of the game is $\emptyset$.
The nim-value of $\TER(G)$ is $1$ since the game has only two positions $\emptyset$ and $\{v\}$. 
\end{example}

The terminal positions of $\DNT(G)$ are the elements of $\mathcal{N}^*$. Hence we have the following consequence of Proposition~\ref{prop:all-same general}.

\begin{prop}\label{prop:all-same}
If every element of $\mathcal{N}^*$ has the same parity $r$, then $\nim(\DNT(G))=r$.
\end{prop}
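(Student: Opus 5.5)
This is a direct application of Proposition~\ref{prop:all-same general}. The plan is to identify the terminal positions of $\DNT(G)$ exactly and then observe that they all have parity $r$.

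\textbf{Positions and terminal positions.} The positions of $\DNT(G)$ are the nonterminating sets $P$, that is, the sets with $P\subseteq N$ for some $N\in\mathcal{N}^*$. Nonterminating sets are closed under taking subsets: if $Q\subseteq P$ then $Q^c\supseteq P^c$, so $[Q^c]\supseteq[P^c]=V$. Hence every position is reachable from $\emptyset$ by adding vertices one at a time, and the options of $P$ are exactly the sets $P\cup\{v\}$, with $v\notin P$, that are again nonterminating.

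\textbf{Terminal positions are exactly $\mathcal{N}^*$.} Suppose $P$ is terminal. Then $P$ is nonterminating, but every $P\cup\{v\}$ with $v\notin P$ is terminating. If $P\subsetneq Q$ with $Q$ nonterminating, pick $v\in Q\setminus P$. By downward closure $P\cup\{v\}$ is nonterminating, which is a contradiction. So $P$ is maximal nonterminating, i.e.\ $P\in\mathcal{N}^*$. Conversely, an element of $\mathcal{N}^*$ has no nonterminating proper superset, so it has no options and is terminal.

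\textbf{Conclusion.} By hypothesis every element of $\mathcal{N}^*$ has parity $r$, so every terminal position of $\DNT(G)$ has parity $r$. Proposition~\ref{prop:all-same general} then gives $\nim(\DNT(G))=r$.

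The only substantive step is the downward closure of nonterminating sets, which follows from monotonicity of the hull operator. I do not expect any real obstacle.
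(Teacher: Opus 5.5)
Your proposal is correct and follows the paper's argument: the paper also notes that the terminal positions of $\DNT(G)$ are exactly the elements of $\mathcal{N}^*$ and then applies Proposition~\ref{prop:all-same general}. The only difference is that you justify that identification through the downward closure of nonterminating sets, a step the paper states without proof.
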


\begin{example}
\label{ex:W5}
Consider the wheel graph $W_5$. Representative quotient gamegraphs for $\DNT(W_5)$ and $\TER(W_5)$ are given in Figure~\ref{fig:W5}. In this quotient, we identified geometrically congruent positions. The canonical quotient map is option-preserving. In both cases, we have labeled positions with their corresponding nim-values. Every position of $\DNT(W_5)$ contains two unmarked antipodal noncentral vertices that generate $W_5$. The terminal positions of $\TER(W_5)$ do not have such pairs of unmarked vertices.
\end{example}

\begin{figure}[h]
\centering
\begin{tabular}{cc}
\begin{tikzpicture}[yscale=.6]
\node[inner sep=0pt,label=left:{\scriptsize $\color{cyan}{*1}$}] (top) at (3.125,5.25) {
\begin{tikzpicture}[scale=.7,auto]
\node (1) at (0,0) [small vert] {};
\node (2) at (1,0) [small vert] {};
\node (3) at (1,1) [small vert] {};
\node (4) at (0,1) [small vert] {};
\node (5) at (.5,.5) [small vert] {};
\path [b] (1) to (2) to (3) to (4) to (1);
\path [b] (1) to (5);
\path [b] (2) to (5);
\path [b] (3) to (5);
\path [b] (4) to (5);
\end{tikzpicture}
};

\node[inner sep=0pt,label=left:{\scriptsize $\color{cyan}{*0}$}] (left1) at (2,3) {
\begin{tikzpicture}[scale=.7,auto]
\node (1) at (0,0) [small vert] {};
\node (2) at (1,0) [small vert] {};
\node (3) at (1,1) [small vert] {};
\node (4) at (0,1) [small vert,fill=orange] {};
\node (5) at (.5,.5) [small vert] {};
\path [b] (1) to (2) to (3) to (4) to (1);
\path [b] (1) to (5);
\path [b] (2) to (5);
\path [b] (3) to (5);
\path [b] (4) to (5);
\begin{pgfonlayer}{background}
\fill[white,opacity=0.3] \convexpath{4,4}{7pt};
\end{pgfonlayer}
\end{tikzpicture}
};
\node[inner sep=0pt,label=right:{\scriptsize $\color{cyan}{*0}$}] (right1) at (4.25,3) {
\begin{tikzpicture}[scale=.7,auto]
\node (1) at (0,0) [small vert] {};
\node (2) at (1,0) [small vert] {};
\node (3) at (1,1) [small vert] {};
\node (4) at (0,1) [small vert] {};
\node (5) at (.5,.5) [small vert,fill=orange] {};
\path [b] (1) to (2) to (3) to (4) to (1);
\path [b] (1) to (5);
\path [b] (2) to (5);
\path [b] (3) to (5);
\path [b] (4) to (5);
\begin{pgfonlayer}{background}
\fill[white,opacity=0.3] \convexpath{4,4}{7pt};
\end{pgfonlayer}
\end{tikzpicture}
};
\node[inner sep=0pt,label=left:{\scriptsize $\color{cyan}{*1}$}] (left2) at (2,.75) {
\begin{tikzpicture}[scale=.7,auto]
\node (1) at (0,0) [small vert] {};
\node (2) at (1,0) [small vert,fill=orange] {};
\node (3) at (1,1) [small vert] {};
\node (4) at (0,1) [small vert,fill=orange] {};
\node (5) at (.5,.5) [small vert] {};
\path [b] (1) to (2) to (3) to (4) to (1);
\path [b] (1) to (5);
\path [b] (2) to (5);
\path [b] (3) to (5);
\path [b] (4) to (5);
\begin{pgfonlayer}{background}
\end{pgfonlayer}
\end{tikzpicture}
};
\node[inner sep=0pt,label=right:{\scriptsize $\color{cyan}{*1}$}] (right2) at (4.25,.75) {
\begin{tikzpicture}[scale=.7,auto]
\node (1) at (0,0) [small vert] {};
\node (2) at (1,0) [small vert] {};
\node (3) at (1,1) [small vert] {};
\node (4) at (0,1) [small vert,fill=orange] {};
\node (5) at (.5,.5) [small vert,fill=orange] {};
\path [b] (1) to (2) to (3) to (4) to (1);
\path [b] (1) to (5);
\path [b] (2) to (5);
\path [b] (3) to (5);
\path [b] (4) to (5);
\begin{pgfonlayer}{background}
\end{pgfonlayer}
\end{tikzpicture}
};
\node[inner sep=0pt,label=left:{\scriptsize $\color{cyan}{*0}$}] (left3) at (3.125,-1.5) {
\begin{tikzpicture}[scale=.7,auto]
\node (1) at (0,0) [small vert] {};
\node (2) at (1,0) [small vert,fill=orange] {};
\node (3) at (1,1) [small vert] {};
\node (4) at (0,1) [small vert,fill=orange] {};
\node (5) at (.5,.5) [small vert,fill=orange] {};
\path [b] (1) to (2) to (3) to (4) to (1);
\path [b] (1) to (5);
\path [b] (2) to (5);
\path [b] (3) to (5);
\path [b] (4) to (5);
\begin{pgfonlayer}{background}
\end{pgfonlayer}
\end{tikzpicture}
};
\path [a] (top) to (left1);
\path [a] (top) to (right1);
\path [a] (left1) to (left2);
\path [a] (right1) to (right2);
\path [a] (left1) to (right2);
\path [a] (left2) to (left3);
\path [a] (right2) to (left3);
\end{tikzpicture} 
\qquad & \qquad
\begin{tikzpicture}[rotate=-90,xscale=.6]
\node[inner sep=0pt,label=left:{\scriptsize $\color{cyan}{*2}$}] (middle1) at (0,0) {
\begin{tikzpicture}[scale=.7,auto]
\node (1) at (0,0) [small vert] {};
\node (2) at (1,0) [small vert] {};
\node (3) at (1,1) [small vert] {};
\node (4) at (0,1) [small vert] {};
\node (5) at (.5,.5) [small vert] {};
\path [b] (1) to (2) to (3) to (4) to (1);
\path [b] (1) to (5);
\path [b] (2) to (5);
\path [b] (3) to (5);
\path [b] (4) to (5);
\end{tikzpicture}
};
\node[inner sep=0pt,label=left:{\scriptsize $\color{cyan}{*1}$}] (middle2) at (2.25,0) {
\begin{tikzpicture}[scale=.7,auto]
\node (1) at (0,0) [small vert] {};
\node (2) at (1,0) [small vert] {};
\node (3) at (1,1) [small vert] {};
\node (4) at (0,1) [small vert,fill=purple] {};
\node (5) at (.5,.5) [small vert] {};
\path [b] (1) to (2) to (3) to (4) to (1);
\path [b] (1) to (5);
\path [b] (2) to (5);
\path [b] (3) to (5);
\path [b] (4) to (5);
\begin{pgfonlayer}{background}
\fill[white,opacity=0.3] \convexpath{4,4}{7pt};
\end{pgfonlayer}
\end{tikzpicture}
};
\node[inner sep=0pt,label=right:{\scriptsize $\color{cyan}{*0}$}] (right2) at (2.25,2.25) {
\begin{tikzpicture}[scale=.7,auto]
\node (1) at (0,0) [small vert] {};
\node (2) at (1,0) [small vert] {};
\node (3) at (1,1) [small vert] {};
\node (4) at (0,1) [small vert] {};
\node (5) at (.5,.5) [small vert,fill=purple] {};
\path [b] (1) to (2) to (3) to (4) to (1);
\path [b] (1) to (5);
\path [b] (2) to (5);
\path [b] (3) to (5);
\path [b] (4) to (5);
\begin{pgfonlayer}{background}
\fill[white,opacity=0.3] \convexpath{4,4}{7pt}; 
\end{pgfonlayer}
\end{tikzpicture}
};
\node[inner sep=0pt,label=left:{\scriptsize $\color{cyan}{*0}$}] (left3) at (4.5,-2.25) {
\begin{tikzpicture}[scale=.7,auto]
\node (1) at (0,0) [small vert] {};
\node (2) at (1,0) [small vert] {};
\node (3) at (1,1) [small vert,fill=purple] {};
\node (4) at (0,1) [small vert,fill=purple] {};
\node (5) at (.5,.5) [small vert] {};
\path [b] (1) to (2) to (3) to (4) to (1);
\path [b] (1) to (5);
\path [b] (2) to (5);
\path [b] (3) to (5);
\path [b] (4) to (5);
\begin{pgfonlayer}{background}
\end{pgfonlayer}
\end{tikzpicture}
};
\node[inner sep=0pt,label=left:{\scriptsize $\color{cyan}{*2}$}] (middle3) at (4.5,0) {
\begin{tikzpicture}[scale=.7,auto]
\node (1) at (0,0) [small vert] {};
\node (2) at (1,0) [small vert,fill=purple] {};
\node (3) at (1,1) [small vert] {};
\node (4) at (0,1) [small vert,fill=purple] {};
\node (5) at (.5,.5) [small vert] {};
\path [b] (1) to (2) to (3) to (4) to (1);
\path [b] (1) to (5);
\path [b] (2) to (5);
\path [b] (3) to (5);
\path [b] (4) to (5);
\begin{pgfonlayer}{background}
\end{pgfonlayer}
\end{tikzpicture}
};
\node[inner sep=0pt,label=right:{\scriptsize $\color{cyan}{*2}$}] (right3) at (4.5,2.25) {
\begin{tikzpicture}[scale=.7,auto]
\node (1) at (0,0) [small vert] {};
\node (2) at (1,0) [small vert] {};
\node (3) at (1,1) [small vert] {};
\node (4) at (0,1) [small vert,fill=purple] {};
\node (5) at (.5,.5) [small vert,fill=purple] {};
\path [b] (1) to (2) to (3) to (4) to (1);
\path [b] (1) to (5);
\path [b] (2) to (5);
\path [b] (3) to (5);
\path [b] (4) to (5);
\begin{pgfonlayer}{background}
\end{pgfonlayer}
\end{tikzpicture}
};
\node[inner sep=0pt,label=left:{\scriptsize $\color{cyan}{*1}$}] (middle4) at (6.75,0) {
\begin{tikzpicture}[scale=.7,auto]
\node (1) at (0,0) [small vert] {};
\node (2) at (1,0) [small vert,fill=purple] {};
\node (3) at (1,1) [small vert] {};
\node (4) at (0,1) [small vert,fill=purple] {};
\node (5) at (.5,.5) [small vert,fill=purple] {};
\path [b] (1) to (2) to (3) to (4) to (1);
\path [b] (1) to (5);
\path [b] (2) to (5);
\path [b] (3) to (5);
\path [b] (4) to (5);
\begin{pgfonlayer}{background}
\end{pgfonlayer}
\end{tikzpicture}
};
\node[inner sep=0pt,label=left:{\scriptsize $\color{cyan}{*0}$}] (left4) at (6.75,-2.25) {
\begin{tikzpicture}[scale=.7,auto]
\node (1) at (0,0) [small vert] {};
\node (2) at (1,0) [small vert,fill=purple] {};
\node (3) at (1,1) [small vert,fill=purple] {};
\node (4) at (0,1) [small vert,fill=purple] {};
\node (5) at (.5,.5) [small vert] {};
\path [b] (1) to (2) to (3) to (4) to (1);
\path [b] (1) to (5);
\path [b] (2) to (5);
\path [b] (3) to (5);
\path [b] (4) to (5);
\begin{pgfonlayer}{background}
\end{pgfonlayer}
\end{tikzpicture}
};
\node[inner sep=0pt,label=right:{\scriptsize $\color{cyan}{*0}$}] (right4) at (6.75,2.25) {
\begin{tikzpicture}[scale=.7,auto]
\node (1) at (0,0) [small vert] {};
\node (2) at (1,0) [small vert] {};
\node (3) at (1,1) [small vert,fill=purple] {};
\node (4) at (0,1) [small vert,fill=purple] {};
\node (5) at (.5,.5) [small vert,fill=purple] {};
\path [b] (1) to (2) to (3) to (4) to (1);
\path [b] (1) to (5);
\path [b] (2) to (5);
\path [b] (3) to (5);
\path [b] (4) to (5);
\begin{pgfonlayer}{background}
\end{pgfonlayer}
\end{tikzpicture}
};
\node[inner sep=0pt,label=left:{\scriptsize $\color{cyan}{*0}$}] (middle5) at (9,0) {
\begin{tikzpicture}[scale=.7,auto]
\node (1) at (0,0) [small vert] {};
\node (2) at (1,0) [small vert,fill=purple] {};
\node (3) at (1,1) [small vert,fill=purple] {};
\node (4) at (0,1) [small vert,fill=purple] {};
\node (5) at (.5,.5) [small vert,fill=purple] {};
\path [b] (1) to (2) to (3) to (4) to (1);
\path [b] (1) to (5);
\path [b] (2) to (5);
\path [b] (3) to (5);
\path [b] (4) to (5);
\begin{pgfonlayer}{background}
\end{pgfonlayer}
\end{tikzpicture}
};
\path [a] (middle1) to (middle2);
\path [a] (middle1) to (right2);
\path [a] (middle2) to (left3);
\path [a] (middle2) to (middle3);
\path [a] (middle2) to (right3);
\path [a] (right2) to (right3);
\path [a] (middle3) to (left4);
\path [a] (middle3) to (middle4);
\path [a] (middle4) to (middle5);
\path [a] (right3) to (right4);
\path [a] (right3) to (middle4);
\end{tikzpicture}\\
$\DNT(W_5)$ \qquad & \qquad $\TER(W_5)$
\end{tabular}

\caption{
\label{fig:W5}
Representative quotients of $\DNT(W_5)$ and $\TER(W_5)$. 
Note that the removal of the terminal positions from $\TER(W_5)$ produces $\DNT(W_5)$.}
\end{figure}

\section{Graphs with a unique minimal generating set}\label{sec:UniqueMinGenSet}

Graphs with a unique minimal generating set are common and relatively easy to analyze.
A vertex is called \emph{simplicial} if the subgraph induced by the neighbors of the vertex is a complete graph. A generating set contains every simplicial vertex by \cite[Proposition 2.7]{BEMSS2024}. Furthermore, if $L$ is a generating set that contains only simplicial vertices, then $\mathcal{G}=\{L\}$ by \cite[Proposition 6.2]{BEMSS2024}.

\begin{example}
Figure~\ref{fig:ugs} shows a graph with a generating set $L=\{u,v,w\}$ consisting of the simplicial vertices. Hence $\mathcal{G}=\{L\}$.
\end{example}

\begin{figure}[h]

\begin{tikzpicture}[scale=.8,auto]
\node (3) at (1,.5) [vert  
] {\scriptsize $x$};
\node (1) at (0,0) [vert] {\scriptsize $u$};
\node (2) at (0,1) [vert] {\scriptsize $v$};
\node (4) at (2,.5) [vert] {\scriptsize $w$};
\path [b] (4) to (3) to (1) to (2) to (3);
\end{tikzpicture}

\caption{
\label{fig:ugs}
A graph with a unique minimal generating set $L=\{u,v,w\}$.
}
\end{figure}

The next result follows easily by either using the definitions of generating and terminating or by utilizing the approach outlined in Remark~\ref{remark:transversal complement voodoo}. 

\begin{prop}\label{prop:unique G}
If $\mathcal{G}=\{L\}$, then $\mathcal{G}^\star=\{\{l\}\mid l\in L\}$ and $\mathcal{N}^\star=\{L^c\}$.
\end{prop}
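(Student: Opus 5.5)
The plan is to read the statement straight off Remark~\ref{remark:transversal complement voodoo}, and then to check both claims directly from the definitions as a sanity check. Assume $\mathcal{G}=\{L\}$.

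\emph{Transversal route.} First compute $\mathcal{G}^\star=\Tr(\mathcal{G})=\Tr(\{L\})$. A set $T$ is a transversal of $\{L\}$ exactly when $T\cap L\neq\emptyset$. The minimal such sets are therefore the singletons $\{l\}$ with $l\in L$. This gives $\mathcal{G}^\star=\{\{l\}\mid l\in L\}$. The second claim is immediate from $\mathcal{N}^\star=\complement(\mathcal{G})=\{L^c\}$.

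\emph{Direct route.} Generating sets are closed under supersets, since $P\subseteq Q$ implies $[P]\subseteq[Q]$. Every generating set contains a minimal generating set, and $L$ is the only one. Hence a set $P$ is generating if and only if $L\subseteq P$.

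\begin{itemize}
\item \emph{Terminating sets.} $P$ is terminating exactly when $P^c$ is nongenerating, that is, when $L\not\subseteq P^c$, that is, when $P\cap L\neq\emptyset$. The minimal such $P$ are the singletons $\{l\}$ with $l\in L$, which recovers $\mathcal{G}^\star$.
\item \emph{Nonterminating sets.} $P$ is nonterminating exactly when $P\cap L=\emptyset$, that is, when $P\subseteq L^c$. The unique maximal such set is $L^c$, so $\mathcal{N}^\star=\{L^c\}$.
\end{itemize}

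The only point needing care is the edge case $L=\emptyset$. This cannot occur, because $V$ is nonempty and $[\emptyset]=\emptyset$. Even if it did, both routes consistently give $\mathcal{G}^\star=\emptyset$ and $\mathcal{N}^\star=\{V\}$. I do not expect any real obstacle.
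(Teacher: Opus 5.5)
Your proposal is correct and matches the paper. The paper simply states that the result follows either directly from the definitions of generating and terminating sets or from the relations $\mathcal{G}^\star=\Tr(\mathcal{G})$ and $\mathcal{N}^\star=\complement(\mathcal{G})$ in Remark~\ref{remark:transversal complement voodoo}, and you carry out both routes correctly, including the check that $L\neq\emptyset$.
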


The following is a consequence of Proposition~\ref{prop:all-same}.

\begin{prop}\label{prop:unique DNT}
If $\mathcal{G}=\{L\}$, then $\nim(\DNT(G))=\pty(L^c)$.
\end{prop}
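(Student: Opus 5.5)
The plan is to combine Proposition~\ref{prop:unique G} with Proposition~\ref{prop:all-same}, which together give the result almost immediately.

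First, since $\mathcal{G}=\{L\}$, Proposition~\ref{prop:unique G} yields $\mathcal{N}^\star=\{L^c\}$. This can also be seen from Remark~\ref{remark:transversal complement voodoo}: $\mathcal{N}^\star=\complement(\mathcal{G})=\{L^c\}$. So the family of maximal nonterminating sets consists of the single set $L^c$.

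Second, recall that the terminal positions of $\DNT(G)$ are exactly the elements of $\mathcal{N}^\star$. Here there is only one, namely $L^c$. Every element of $\mathcal{N}^\star$ therefore trivially has the same parity $r=\pty(L^c)$. Proposition~\ref{prop:all-same} then gives $\nim(\DNT(G))=\pty(L^c)$.

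There is no real obstacle. The only point worth checking is the degenerate case $L=V$, where $L^c=\emptyset$ and $\mathcal{N}^\star=\{\emptyset\}$. In that case the game has the single position $\emptyset$, whose nim-value is $0=\pty(\emptyset)$, matching Example~\ref{ex:single vertex}. This case is already covered by Proposition~\ref{prop:all-same}, so no separate treatment is needed.
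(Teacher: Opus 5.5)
Your proposal is correct and matches the paper's argument: $\mathcal{N}^\star=\{L^c\}$ by Proposition~\ref{prop:unique G}, and Proposition~\ref{prop:all-same} then gives $\nim(\DNT(G))=\pty(L^c)$. Your check of the degenerate case $L=V$ is consistent with this and needs no separate treatment.
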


Using our notation, the next result is a special case of~\cite[Proposition 7.5]{SiebenHypergraph}.

\begin{prop}
If $\mathcal{N}^*$ is pairwise disjoint, $\{\pty(P) \mid  P\in\mathcal{N}^*\}=\{a\}$, and $V(G)\ne\bigcup\mathcal{N}^*$, then $\nim(\TER(G))=a+1$.
\end{prop}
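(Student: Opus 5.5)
The plan is to compute every nim-value in $\TER(G)$ directly. Pairwise disjointness of $\mathcal{N}^\star$ makes the nonterminal positions split into disjoint chains of subsets, one for each $N\in\mathcal{N}^\star$, glued together only at $\emptyset$. A position $P$ of $\TER(G)$ is terminal exactly when $P$ is terminating. So the nonterminal positions are the nonterminating sets, that is, the sets $P$ with $P\subseteq N$ for some $N\in\mathcal{N}^\star$. Since the elements of $\mathcal{N}^\star$ are pairwise disjoint, every nonempty nonterminating $P$ lies in a \emph{unique} $N\in\mathcal{N}^\star$.

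\textbf{Step 1 (options of a nonempty nonterminal position).} Fix such $P\subseteq N$ and take an unselected vertex $x$. If $x\in N\setminus P$, then $P\cup\{x\}\subseteq N$ is again nonterminal. If $x\notin N$, then $P\cup\{x\}$ meets $N$ and contains $x\notin N$. By disjointness it is contained in no member of $\mathcal{N}^\star$, so it is terminal with nim-value $0$. Such an $x$ always exists, because $V\ne\bigcup\mathcal{N}^\star$ provides a vertex outside every member of $\mathcal{N}^\star$, and this vertex is not in $P$. Hence every nonempty nonterminal $P$ has a terminal option.

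\textbf{Step 2 (induction on $|N\setminus P|$).} I will show that for nonempty $P\subseteq N$,
\[
\nim(P)=\begin{cases}1 & \text{if } \pty(N\setminus P)=0,\\ 2 & \text{if } \pty(N\setminus P)=1.\end{cases}
\]
For the base case $P=N$, all options are terminal, so $\nim(P)=\mex\{0\}=1$. For the inductive step, the options have values $0$ together with $\nim(P\cup\{x\})$ for $x\in N\setminus P$. By induction these latter values all equal $1$ if $|N\setminus P|$ is odd, and all equal $2$ if $|N\setminus P|$ is even and positive. So $\nim(P)=\mex\{0,1\}=2$ or $\mex\{0,2\}=1$ respectively.

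\textbf{Step 3 (the starting position).} The starting position $\emptyset$ is nonterminating, since $\mathcal{N}^\star=\complement(\mathcal{G})$ is nonempty. Its options are of three kinds:
\begin{itemize}
\item singletons $\{x\}$ with $x$ outside $\bigcup\mathcal{N}^\star$, which are terminal with value $0$ and exist by hypothesis;
\item singletons $\{x\}$ with $x\in N\in\mathcal{N}^\star$, for which $|N\setminus\{x\}|=|N|-1$ has parity $1-a$.
\end{itemize}
By Step 2, the second kind of option has value $2$ if $a=0$ and value $1$ if $a=1$. Therefore $\nim(\emptyset)=\mex\{0,2\}=1=a+1$ when $a=0$, and $\mex\{0,1\}=2=a+1$ when $a=1$. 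The degenerate case $\mathcal{N}^\star=\{\emptyset\}$ has $a=0$ and only terminal options from $\emptyset$, giving $\nim=1$ as well.

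The main point to handle carefully is Step 1: the existence of a terminal option at every nonempty nonterminal position. This is exactly where both disjointness and $V\ne\bigcup\mathcal{N}^\star$ are used. Without disjointness, adding a vertex outside $N$ could keep the position nonterminal inside another member of $\mathcal{N}^\star$, and the two-valued pattern of Step 2 would break.
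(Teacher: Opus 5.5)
Your proof is correct, but it takes a different route from the paper. The paper gives no argument of its own; it obtains the statement as a special case of a general result on impartial hypergraph games (Proposition~7.5 of the cited hypergraph paper).

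You instead compute every nim-value directly:
\begin{enumerate}
\item disjointness places each nonempty nonterminal position in a unique $N\in\mathcal{N}^\star$;
\item the uncovered vertex supplies a terminal option at every such position;
\item induction on $|N\setminus P|$ shows the values alternate between $1$ and $2$ along each chain, and the starting position is then handled by a single $\mex$ computation.
\end{enumerate}
Your route is self-contained, determines the value of every position rather than only the start, and shows exactly where each hypothesis is used. It also uses nothing about geodetic convexity beyond the facts that the nonterminating sets form a down-closed family with maximal elements $\mathcal{N}^\star$ and that $\emptyset$ is nonterminating. So it already works at the generality of the cited hypergraph result. The paper's citation buys brevity and places the statement within that general framework.

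Two cosmetic points:
\begin{enumerate}
\item In Step~3 you announce three kinds of options but list only two.
\item When $a=1$, say explicitly that options of the second kind exist: every $N\in\mathcal{N}^\star$ has odd size and is therefore nonempty. The equality $\mex\{0,1\}=2$ needs the value $1$ to actually occur. For $a=0$ this does not matter, since $\mex\{0,2\}=\mex\{0\}=1$.
\end{enumerate}
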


This together with Proposition~\ref{prop:unique G} immediately implies the following.

\begin{prop}\label{prop:unique TER}
If $\mathcal{G}=\{L\}$, then $\nim(\TER(G))=1+\pty(L^c)$.
\end{prop}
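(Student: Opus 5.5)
The plan is to apply the preceding proposition (the special case of \cite[Proposition 7.5]{SiebenHypergraph}) with $a=\pty(L^c)$, and to confirm the value with a short self-contained induction.

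\textbf{Checking the hypotheses.} By Proposition~\ref{prop:unique G}, $\mathcal{N}^\star=\{L^c\}$.
\begin{enumerate}
\item A one-element family is trivially pairwise disjoint.
\item Its set of parities is $\{\pty(L^c)\}$, so we may take $a=\pty(L^c)$.
\item We need $V\ne\bigcup\mathcal{N}^\star=L^c$, which amounts to $L\neq\emptyset$. Since $V$ is nonempty and $[\emptyset]=\emptyset\ne V$, the empty set is not generating. Hence the minimal generating set $L$ is nonempty.
\end{enumerate}
The proposition then gives $\nim(\TER(G))=a+1=1+\pty(L^c)$. Here $a+1$ is ordinary integer addition, so the value is $1$ or $2$.

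\textbf{Direct confirmation.} By Proposition~\ref{prop:unique G}, $\mathcal{G}^\star=\{\{l\}\mid l\in L\}$. So a position $P$ is terminal exactly when $P\cap L\neq\emptyset$. Every nonterminal position is therefore a subset of $L^c$, and by symmetry its nim-value depends only on $m=|L^c\setminus P|$, the number of unselected vertices of $L^c$. From such a position there are two kinds of moves:
\begin{itemize}
\item selecting any $l\in L$, which leads to a terminal position of value $0$ (available because $L\ne\emptyset$);
\item when $m>0$, selecting a vertex of $L^c$, which leads to the position with $m-1$.
\end{itemize}

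Write $f(m)$ for the nim-value of such a position. We induct on $m$:
\begin{itemize}
\item $f(0)=\mex\{0\}=1$.
\item If $f(m-1)=1$, then $f(m)=\mex\{0,1\}=2$.
\item If $f(m-1)=2$, then $f(m)=\mex\{0,2\}=1$.
\end{itemize}
Thus $f(m)=1+\pty(m)$. The starting position $\emptyset$ has $m=|L^c|$, which gives $\nim(\TER(G))=1+\pty(L^c)$.

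\textbf{Main obstacle.} The only delicate point is the nondegeneracy condition $V\ne\bigcup\mathcal{N}^\star$, equivalently $L\ne\emptyset$. It guarantees that a move into $L$ to a terminal position is always available. Without it the value would be $\pty(L^c)$ rather than $1+\pty(L^c)$, so this check must not be skipped.
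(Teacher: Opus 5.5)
Your proposal is correct and follows the paper's argument. The paper derives this statement immediately from Proposition~\ref{prop:unique G}, which gives $\mathcal{N}^\star=\{L^c\}$, together with the preceding special case of \cite[Proposition 7.5]{SiebenHypergraph}, exactly as you do. Your explicit check that $L\neq\emptyset$ (so $V\neq\bigcup\mathcal{N}^\star$) and your self-contained induction $f(m)=\mex\{0,f(m-1)\}$ are sound, and they supply details the paper leaves implicit.
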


\subsection{Complete split graphs}

A \emph{complete split graph} is the join $K_m +\overline{K_n}$ of the complete graph $K_m$ and the complement graph $\overline{K_n}$. Recall from \cite[Proposition~6.8]{BEMSS2024} that $\mathcal{G}=\{V(\overline{K}_n)\}$ for $n\geq 2$. So we have the following by Propositions~\ref{prop:unique DNT} and \ref{prop:unique TER}.

\begin{prop}
\label{prop:splitTER}
If $G=K_m+\overline{K}_n$ and $n\ge 2$, then
$\nim(\DNT(G))=\pty(m)$ and $\nim(\TER(G))=1+\pty(m)$.
\end{prop}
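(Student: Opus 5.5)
The plan is to reduce the statement directly to Propositions~\ref{prop:unique DNT} and \ref{prop:unique TER}, which require only knowing the unique minimal generating set $L$ and the parity of its complement.

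The first step is to identify $L$. For $n\ge 2$ we invoke \cite[Proposition~6.8]{BEMSS2024}, which gives $\mathcal{G}=\{V(\overline{K}_n)\}$. If we wanted a self-contained check instead, it is short:
\begin{itemize}
\item Each vertex of $\overline{K}_n$ is simplicial, since its neighborhood is the clique $K_m$.
\item Any two distinct vertices of $\overline{K}_n$ are at distance $2$, and every vertex of $K_m$ is a common neighbor of them. Hence every vertex of $K_m$ lies on a geodesic between them, so $[V(\overline{K}_n)]=V$.
\end{itemize}
Thus $L=V(\overline{K}_n)$ is a generating set consisting only of simplicial vertices, and \cite[Proposition 6.2]{BEMSS2024} yields $\mathcal{G}=\{L\}$.

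Next we compute the complement: $L^c=V(K_m)$, so $\pty(L^c)=\pty(m)$.

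Substituting into Proposition~\ref{prop:unique DNT} gives $\nim(\DNT(G))=\pty(m)$. Substituting into Proposition~\ref{prop:unique TER} gives $\nim(\TER(G))=1+\pty(m)$.

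There is no real obstacle. The only point needing care is the hypothesis $n\ge 2$. It guarantees that the two vertices of $\overline{K}_n$ used above exist, so that the $K_m$ vertices lie on geodesics between them and $L$ generates. When $n=1$ the graph is complete, and the vertices of $K_m$ become simplicial as well, so $L$ would be different.
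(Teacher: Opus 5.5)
Your proposal is correct and follows the paper's route: cite \cite[Proposition~6.8]{BEMSS2024} for $\mathcal{G}=\{V(\overline{K}_n)\}$, note $\pty(L^c)=\pty(m)$, and apply Propositions~\ref{prop:unique DNT} and~\ref{prop:unique TER}. Your optional self-contained check, using simplicial vertices and \cite[Proposition~6.2]{BEMSS2024}, is also sound. It tacitly uses $m\ge 1$, so that two vertices of $\overline{K}_n$ are at distance $2$ with every vertex of $K_m$ on a geodesic between them.
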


\begin{example}
The diamond graph shown in Figure \ref{fig:diamond} is the complete split graph $G:=K_2+\overline{K_2}$ with $\mathcal{G}=\{\{x,y\}\}$ and $\mathcal{N}^*=\{\{u,v\}\}$. We have $\nim(\DNT(G))=\pty(\{u,v\})=0$ and $\nim(\TER(G))=1+\pty(\{u,v\})=1$.
\end{example}

\begin{figure}[h]

\begin{tikzpicture}[scale=.8,auto]
\node (3) at (-1,.5) [vert ] {\scriptsize $x$};
\node (1) at (0,0) [vert] {\scriptsize $v$};
\node (2) at (0,1) [vert] {\scriptsize $u$};
\node (4) at (1,.5) [vert] {\scriptsize $y$};
\path [b] (4) to (1) to (2) to (3) to (1) to (4) to (2);
\end{tikzpicture}

\caption{
\label{fig:diamond}
The complete split graph $K_2+\overline{K_2}$ with a unique minimal generating set $L=\{x,y\}$.
}
\end{figure}

\subsection{Corona graphs}

The \emph{corona} $H \circ K_1$ is formed from the graph $H$ by adding for each $v \in V(H)$ a new vertex $v'$ and a new edge $vv'$. A corona graph has a unique minimal generating set $L=\{v'\mid v\in V(G)\}$ by~\cite[Proposition 6.10]{BEMSS2024}. Hence we have the following.

\begin{prop}
If $H$ is a nontrivial graph and $G=H\circ K_1$, then
$\nim(\DNT(G))=\pty(V)$ and $\nim(\TER(G))=1+\pty(V)$.
\end{prop}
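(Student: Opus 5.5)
The plan is to apply Propositions~\ref{prop:unique DNT} and \ref{prop:unique TER} with the unique minimal generating set $L=\{v'\mid v\in V(H)\}$ supplied by \cite[Proposition 6.10]{BEMSS2024}. The only real work is to compute $L^c$ and its parity; note that in the statement $V$ must be read as $V(H)$, the vertex set of the original graph, not the vertex set of the corona.

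First I will confirm that $\mathcal{G}=\{L\}$ applies, since $H$ is nontrivial. As a sanity check independent of the citation, each $v'$ has degree one, so it is simplicial and must lie in every generating set by \cite[Proposition 2.7]{BEMSS2024}. Moreover, $L$ generates $G$: for any $v\in V(H)$, pick a neighbor $u$ of $v$ in $H$, or more generally any $u\ne v$ in the same component. Then the geodesic from $v'$ to $u'$ passes through $v$, because $v'$ is a pendant vertex attached only to $v$. Hence $v\in[L]$.

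The hidden subtlety is that this argument needs each component of $H$ to have at least two vertices. If $H$ has an isolated vertex, then the corresponding component of $G$ is $K_2$, whose two vertices are both simplicial. In that case $L$ must be enlarged, and the parity computation changes. So I will rely on the cited proposition's hypotheses, reading ``nontrivial'' as the condition under which \cite[Proposition 6.10]{BEMSS2024} holds.

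Given $\mathcal{G}=\{L\}$, the complement $L^c$ inside $V(G)$ is exactly the set of original vertices $V(H)$. Therefore $\pty(L^c)=\pty(V(H))$. Proposition~\ref{prop:unique DNT} then gives $\nim(\DNT(G))=\pty(V(H))$, and Proposition~\ref{prop:unique TER} gives $\nim(\TER(G))=1+\pty(V(H))$.

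The main obstacle is not the game theory, which is immediate from the earlier results. It is checking that the hypotheses of the cited uniqueness result are met, in particular handling or excluding isolated vertices of $H$ as described above.
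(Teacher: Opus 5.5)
Your proposal is correct and follows the paper's argument: the paper likewise takes $\mathcal{G}=\{L\}$, with $L$ the set of pendant vertices from \cite[Proposition 6.10]{BEMSS2024}, notes that $L^c=V(H)$, and applies Propositions~\ref{prop:unique DNT} and~\ref{prop:unique TER}. Your reading of $V$ as $V(H)$ is the intended one (the paper's $L=\{v'\mid v\in V(G)\}$ is a typo for $V(H)$), and your caveat about isolated vertices of $H$ is a valid point that the paper leaves implicit in the word ``nontrivial'' and in the cited result.
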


\subsection{Block graphs}

A \emph{block} of a graph is a maximal connected subgraph without a cut vertex. A \emph{block graph} or \emph{clique tree} is a graph whose blocks are complete graphs. A vertex is called \emph{simplicial} if the subgraph induced by the neighbors of the vertex is a complete graph. The simplicial vertices of a block graph form the unique minimal generating set $L$ by ~\cite[Proposition 6.15]{BEMSS2024}.

\begin{example}\label{ex:complete graphs}
The complete graph $K_n$ is a block graph with $L=V$. Hence $\pty(L^c)=0$, and so $\nim(\DNT(K_n))=0$ and $\nim(\TER(K_n))=1$. 
\end{example}

\begin{example}
A generalized windmill graph $G = \Wd(\vec n)$ for  $\vec n = (n_1 , \ldots , n_{\ell} ) \in \mathbb{N}^{\ell}_{\ge 2}$ and $\ell \ge 2$ is the block graph built from the complete graphs $K_{n_1} , \ldots , K_{n_{\ell}}$ by gluing at a common vertex $c$. Since $L=\{c\}^c$, $\nim(\DNT(G))=1-\pty(V)$ and $ \nim(\TER(G))=2-\pty(V)$.
\end{example}

\begin{example}
A forest graph $G$ is a block graph where $L$ is the set of leaves. In particular, 
$\nim(\DNT(P_n))=\pty(n)$ and
$\nim(\TER(P_n))=1+\pty(n)$
for the path graph $P_n$, while
$\nim(\DNT(K_{1,n}))=1$ and
$\nim(\TER(K_{1,n}))=1$
for the star graph $K_{1,n}$.
\end{example}

\section{Graph families}\label{sec:GraphFamilies}

We study the impartial games on several graph families. 

\subsection{Cycle graphs}

For $n\ge 4$, we define $C_n$ to be the \emph{cycle graph} with vertex set $\{v_1,\ldots,v_n\}$ and $v_i$ is adjacent to $v_{i+1}$ if the indices are considered modulo $n$. Note that if $n=3$, then the construction gives the complete graph $K_3$.

\begin{prop}
For cycle graphs with odd $n$, 
\[
\mathcal{G}=\{\{v_{i},v_j,v_{k}\} \mid i < j < k \text{ and } j-i,k-j,i+n-k\le(n-1)/2 \}.
\]
\end{prop}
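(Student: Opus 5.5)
We will describe the convex sets of the odd cycle $C_n$ explicitly and then read off the minimal generating sets. Set $m=(n-1)/2$. Because $n$ is odd, two vertices at cyclic distance $d\le m$ are joined by a \emph{unique} geodesic, namely the shorter arc between them. The first step is to show that a set $K$ is convex exactly when $K=V$ or $K$ is an ``arc'' $\{v_a,v_{a+1},\dots,v_{a+\ell}\}$ (indices mod $n$) of length $\ell\le m$, together with $\emptyset$. Arcs of length at most $m$ are convex: any two of their vertices are at distance at most $\ell\le m$, and the unique shorter arc joining them stays inside the arc. Conversely, suppose $K$ is convex and nonempty, and write the complement of $K$ as a union of maximal gaps. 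If some gap of $K$ is bounded by two vertices of $K$ whose shorter arc passes through that gap, convexity forces the gap to be filled. Hence every gap of a proper convex set has length larger than the complementary arc, which is possible only when there is exactly one gap and $K$ is an arc of length at most $m$.

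With this description, $[P]=V$ holds exactly when $P$ lies in no arc of length at most $m$. Equivalently, every cyclic gap between consecutive elements of $P$ is less than $n-m=m+1$ edges, that is, at most $m$. Here the gap between consecutive chosen vertices $v_i,v_j$ means $j-i$, measured cyclically. We then verify minimality. Take a set $P=\{v_i,v_j,v_k\}$ with $i<j<k$ and all three cyclic gaps $j-i$, $k-j$, $i+n-k$ at most $m$. This set generates, by the criterion above. Removing any one of its vertices merges two gaps, and the remaining two vertices are at distance at most $m$, so they lie on an arc of length at most $m$ and do not generate. Singletons and the empty set also fail to generate. Hence every such triple is a minimal generating set.

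The converse is the main obstacle: every minimal generating set has exactly three elements. Let $P$ be generating with $|P|\ge 4$ and consecutive gaps $g_1,\dots,g_r$ (with $r=|P|\ge4$), each at most $m$ and summing to $n=2m+1$. We want to delete one vertex so that the merged gap $g_t+g_{t+1}$ is still at most $m$. If every adjacent pair satisfied $g_t+g_{t+1}\ge m+1$, then summing over the $r$ adjacent pairs would give $2n\ge r(m+1)\ge 4(m+1)$, so $4m+2\ge 4m+4$, a contradiction. Thus some vertex can be removed while the set still generates, so $P$ is not minimal. Together with the fact that sets of size at most $2$ never generate, this gives exactly the stated family $\mathcal{G}$. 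The delicate part is the classification of convex sets, in particular the argument that a convex set with two or more gaps must be all of $V$; there I would argue carefully that some pair of $K$-vertices bounding a gap has that gap as its unique geodesic.
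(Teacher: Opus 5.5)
Your proof is correct, and it follows a different and more complete route than the paper's.

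\textbf{The paper's route.} The paper checks the forward direction directly. When all three cyclic gaps are at most $(n-1)/2$, the unique geodesic between any two chosen vertices is the arc avoiding the third. So the three geodesics already cover the cycle: a single geodetic-closure step gives $V$. For the converse, the paper only asserts, without justification, that every generating set contains three such vertices.

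\textbf{Your route.} You classify all convex sets of the odd cycle as $\emptyset$, $V$, and arcs of length at most $m$. This gives a global criterion: $P$ generates if and only if every cyclic gap of $P$ is at most $m$. Minimality of the triples then follows from this criterion. The fact that minimal generating sets have size at most $3$ follows from your averaging inequality $2n\ge r(m+1)$.

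\textbf{Comparison.} Your approach actually proves the direction the paper leaves unproved, and it describes all generating sets, not just the minimal ones. The paper's argument is quicker for the forward direction and shows slightly more, namely that the geodesics between the chosen vertices already cover $V$ without iterating the hull.

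\textbf{The step you flagged.} The classification step is already closed by your own observation. Each gap, together with its two bounding vertices of $K$, forms an arc of length at least $m+1$. Distinct gaps give edge-disjoint arcs. Two gaps would therefore need at least $2m+2>n$ edges, which is impossible.
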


\begin{proof}
It is easy to see that two vertices cannot generate. The condition on the $i,j,k$ guarantees that the unique geodesic between two of the chosen vertices does not contain the third chosen vertex. Hence every such $\{v_i,v_j,v_k\}$ is a generating set. Every generating set must contain three such vertices.
\end{proof}

\begin{example}
For $C_5$, the maximum directed distance allowed between consecutive vertices in a minimal generating set is $(5-1)/2=2$. So 
\begin{align*}
\mathcal{G} &=\{\{v_1,v_3,v_4\},\{v_2,v_4,v_5\},\{v_1,v_3,v_5\},\{v_1,v_2,v_4\},\{v_2,v_3,v_5\}\}, \\
\mathcal{N}^* &=\complement(\mathcal{G})=\{\{v_2,v_5\},\{v_1,v_3\},\{v_2,v_4\},\{v_3,v_5\},\{v_1,v_4\}\}, \\
\mathcal{G}^* &=\Tr(\mathcal{G})=\{\{v_1,v_2\},\{v_2,v_3\},\{v_3,v_4\},\{v_4,v_5\},\{v_1,v_5\}\}.
\end{align*}
Note that $\mathcal{N}^*$ only contains even sets. Also note that the second player can win $\TER(C_5)$ after two moves since every vertex is contained in a two-element terminating set.
\end{example}

\begin{prop}
For cycle graphs, $\nim(\DNT(C_{n}))=0$.
\end{prop}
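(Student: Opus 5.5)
The plan is to split according to the parity of $n$. For odd $n$ I will use Proposition~\ref{prop:all-same}; for even $n$ the maximal nonterminating sets have mixed parities, so I will instead give an explicit antipodal mirroring strategy for the second player.

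\emph{Odd $n$.} By the preceding proposition, every element of $\mathcal{G}$ is a $3$-element set. Hence every element of $\mathcal{N}^\star=\complement(\mathcal{G})$ has exactly $n-3$ elements. Since $n$ is odd, $n-3$ is even, so all terminal positions of $\DNT(C_n)$ have parity $0$. Proposition~\ref{prop:all-same} then gives $\nim(\DNT(C_n))=0$.

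\emph{Even $n$.} Let $v'$ denote the antipode of $v$, namely $v_i'=v_{i+n/2}$. I first record a fact about generating sets. Two antipodal vertices $v,v'$ are joined by two geodesics, and together these cover the whole cycle, so $[\{v,v'\}]=V$. Conversely, a single vertex does not generate. Hence any set $U$ that is closed under the antipodal map and satisfies $|U|\ge 2$ contains an antipodal pair and is therefore generating.

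The strategy is for the second player, whenever the first player selects $x$, to answer by selecting $x'$. I maintain the invariant that after each move of the second player, the unselected set $U=P^c$ is antipodally symmetric. This holds at the start, since $U=V$. Consequently $|U|$ is always even at such moments.

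Now suppose the first player legally selects $x$, so that $U\setminus\{x\}$ is generating. Then $|U\setminus\{x\}|\ge 2$, so $|U|\ge 3$, and since $|U|$ is even, $|U|\ge 4$. Because $U$ is symmetric, $x'\in U$ and $x'\ne x$, so the reply $x'$ is an unselected vertex. The set $U\setminus\{x,x'\}$ is again symmetric and has at least $2$ elements, so it contains an antipodal pair and is generating. Thus the reply is a legal move and the invariant is restored.

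It follows that the second player always has a legal reply, so the first player is the one who eventually cannot move. Hence the starting position is a $P$-position and $\nim(\DNT(C_n))=0$.

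The only delicate point is checking that the mirrored reply remains legal. The argument above handles it by combining the parity of $|U|$ with the fact that every symmetric set of size at least $2$ generates.
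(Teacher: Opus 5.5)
Your proof is correct and follows the paper's argument. For odd $n$ you apply Proposition~\ref{prop:all-same} to $\mathcal{N}^\star$, whose elements all have size $n-3$. For even $n$ you use the antipodal mirroring strategy, and you usefully make explicit the legality check (every antipodally symmetric set of size at least $2$ generates) that the paper leaves implicit. One harmless inaccuracy: your motivating remark that the maximal nonterminating sets have mixed parities for even $n$ fails for $n=4$, where $\mathcal{G}$ consists only of the two antipodal pairs, but this plays no role in the argument.
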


\begin{proof}
If $n$ is even, then the second player wins by always selecting the vertex antipodal to the vertex selected by the first player. The game ends when there are two antipodal unselected vertices remaining. 
If $n$ is odd, then $\mathcal{N}^*=\complement(\mathcal{G})$ contains sets with size $n-3$, so the result follows from Proposition~\ref{prop:all-same}. 
\end{proof}

\begin{prop}
For cycle graphs with even $n$, $\nim(\TER(C_{n}))=0$.
\end{prop}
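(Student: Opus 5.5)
The plan is to give an explicit winning strategy for the second player, based on antipodal symmetry as in the proof that $\nim(\DNT(C_n))=0$. Throughout, let $n\ge 4$ be even, write $x'$ for the vertex antipodal to $x$, and let $Q:=P^c$ be the set of unselected vertices. A position $P$ is terminating exactly when $[Q]\ne V$. I need only two facts about convex hulls in $C_n$:
\begin{enumerate}
\item[(a)] any set containing an antipodal pair $\{x,x'\}$ is generating, since the two geodesics between $x$ and $x'$ cover the whole cycle;
\item[(b)] two non-antipodal vertices do not generate, since they have a unique geodesic and it is convex, so their hull is that geodesic, which is not $V$.
\end{enumerate}

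The invariant is that after each move of the second player, the position is either terminating (so the second player has just won) or $Q$ is a union of antipodal pairs with $|Q|\ge 4$. This holds at the start, where $Q=V$ and $|V|=n\ge4$. Suppose the first player faces such a $Q$ and removes $x$. The set $Q\setminus\{x\}$ still contains some full antipodal pair, because at most one pair is broken and $|Q|\ge 4$ means there are at least two pairs. By (a), $Q\setminus\{x\}$ is generating, so the first player never terminates the game.

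The second player then replies as follows.
\begin{itemize}
\item If $|Q|\ge 6$, remove $x'$. The new unselected set $Q\setminus\{x,x'\}$ is again a union of antipodal pairs, has size at least $4$, and generates by (a). This is a legal nonterminating move and restores the invariant.
\item If $|Q|=4$, so $Q=\{x,x',b,b'\}$, remove $b$. The unselected set becomes $\{x',b'\}$. These two vertices are not antipodal, because $b'\ne x$. By (b) they do not generate, so the position is terminating and the second player wins.
\end{itemize}

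Since $|Q|$ decreases by $2$ each round, the game must reach the case $|Q|=4$. The second player therefore makes the last move, so the starting position is a $P$-position and $\nim(\TER(C_n))=0$.

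The main obstacle is the endgame. Pure mirroring fails there: if the second player removes $x'$ when $|Q|=4$, the first player faces a single antipodal pair and wins by removing either vertex. The fix is to switch to the terminating move at $|Q|=4$, and the only real verification needed is fact (b), that the remaining non-antipodal pair does not generate.
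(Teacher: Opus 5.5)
Your proof is correct and follows the same approach as the paper. The second player mirrors antipodally until four unselected vertices remain, then answers with a vertex not antipodal to the first player's choice, so that the last two unselected vertices are a non-antipodal and hence nongenerating pair. Your facts (a) and (b), and the invariant that the first player can never terminate, spell out the details the paper's short proof leaves to the reader.
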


\begin{proof}
The second player wins by selecting vertices antipodal to the selection of the first player until there are only four unselected vertices remaining. In the last move the second player selects a vertex that is not antipodal to the vertex selected by the first player.
\end{proof}

The case where $n$ is odd is surprisingly tricky for $\TER(C_n)$.   We have verified the following conjecture up to $n=21$ via computer.

\begin{conjecture}
\label{conj:oddCyclic}
For cycle graphs with odd $n$, $\nim(\TER(C_{n}))=0$.
\end{conjecture}

\subsection{Hypercube graphs}

For $n\geq 3$, we define the set of  binary strings of length $n$ via
\[
\{0,1\}^n := \{a_1a_2\cdots a_n \mid a_k \in \{0,1\} \}.
\]
The \emph{hypercube graph} $Q_n$ of dimension $n$ is the graph whose vertices are elements of $\{0,1\}^n$ with two binary strings connected by an edge exactly when they differ by a single digit. We say that two binary strings $a_1a_2\cdots a_n$ and $b_1b_2\cdots b_n$ are \emph{antipodal} if $a_i\neq b_i$ for all $1\leq i\leq n$.

\begin{prop}
For hypercube graphs, $\nim(\DNT(Q_{n}))=0$.
\end{prop}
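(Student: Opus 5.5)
Our approach is the antipodal pairing strategy used for even cycles: the second player wins $\DNT(Q_n)$ by always answering the first player's vertex $x$ with its antipode $\bar x$. For this we need the convexity structure of $Q_n$. The key fact is that the interval between antipodal vertices $a$ and $\bar a$ is all of $Q_n$. Indeed, $d(a,\bar a)=n$, and every vertex $v$ satisfies $d(a,v)+d(v,\bar a)=n$, since each coordinate of $v$ agrees with exactly one of $a,\bar a$. So any set containing an antipodal pair is generating.

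Conversely, we show that a set $S$ containing no antipodal pair is nongenerating. Convex sets in $Q_n$ are subcubes (faces), and the convex hull of $S$ is the smallest subcube containing it. That subcube fixes coordinate $i$ exactly when all elements of $S$ agree in coordinate $i$. So $S$ is generating if and only if for every $i$, some two elements of $S$ differ in coordinate $i$. This condition does not force an antipodal pair; for example, $\{000,011,101\}$ generates $Q_3$ for $n\ge 3$. So the characterization must be weakened, and we only use the direction we need: unselected sets containing an antipodal pair are generating.

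The strategy runs as follows. Invariant: after each of the second player's moves, the selected set $P$ is a union of antipodal pairs. Then $P^c$ is also a union of antipodal pairs.

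Suppose the first player selects $x\notin P$ legally, so $[(P\cup\{x\})^c]=V$. Then $\bar x\notin P$, because $P$ is closed under antipodes. We must check that the response $\bar x$ is legal, i.e.\ that $(P\cup\{x,\bar x\})^c$ is still generating.
\begin{itemize}
\item If this complement contains some antipodal pair $\{y,\bar y\}$, it is generating by the key fact.
\item Otherwise the complement is empty, since it is a union of antipodal pairs. In that case $P^c=\{x,\bar x\}$. Removing $x$ leaves the single vertex $\{\bar x\}$, which does not generate, so the first player's move was illegal. This cannot occur.
\end{itemize}
Hence the second player always has a legal reply and makes the last move, so $\nim(\DNT(Q_n))=0$.

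The main obstacle is the second case, where the first player's move leaves few vertices unselected. The argument above disposes of it, because under the invariant the only way the reply could be illegal is when exactly two vertices remain, and then the first player's move was already illegal. No further analysis of $\mathcal{N}^*$ is needed.
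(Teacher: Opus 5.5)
Your argument is correct and is the same antipodal pairing strategy the paper uses. The paper cites \cite[Proposition~7.7]{BEMSS2024} for the fact that antipodal pairs generate, while your Hamming-distance interval argument proves it directly, and your legality check spells out why the reply $\bar x$ is always available; you should delete the ``conversely'' paragraph, since its opening claim is false (as your own example $\{000,011,101\}$ shows) and plays no role in the proof.
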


\begin{proof}
Each pair of antipodal vertices is a minimal generating set by \cite[Proposition~7.7]{BEMSS2024}. Then the second player wins by always selecting the antipodal vertex after the first player's selection. The game ends when there is a single pair of antipodal vertices remaining.
\end{proof}
  
\begin{prop}
For hypercube graphs, $\nim(\TER(Q_{n}))=0$.
\end{prop}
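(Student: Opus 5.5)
The plan is to exhibit a winning strategy for the second player in $\TER(Q_n)$, in the spirit of the even-cycle argument: mirror antipodally until few vertices remain, then deviate once. First I would pin down generating sets. Convex sets of $Q_n$ are exactly the subcubes, that is, sets obtained by fixing some coordinates and letting the rest vary freely. Hence the convex hull of a set $U$ is the subcube that fixes precisely the coordinates on which all elements of $U$ agree. So $U$ is generating if and only if, for every coordinate $i$, $U$ contains strings with both values in position $i$. In particular, any set containing an antipodal pair is generating, consistent with \cite[Proposition~7.7]{BEMSS2024}. A position $P$ is therefore terminating exactly when the unselected set $U=P^c$ is constant on some coordinate.

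The strategy is as follows. Whenever the first player selects $x$ and at least $4$ vertices were unselected before that move, the second player selects the antipode $\bar x$. This keeps the following invariant: after each of the second player's moves in this phase, $U$ is closed under taking antipodes, and $|U|$ is a multiple of $2$ (in fact $|U|\equiv 2^n \pmod 4$ is not needed; only closedness matters). I then have to check that the game never ends during this phase. Suppose the first player moves from an antipodally closed $U$ with $|U|\ge 4$. Then $U\setminus\{x\}$ still contains an antipodal pair, so it is generating. After the reply, $U\setminus\{x,\bar x\}$ is antipodally closed and nonempty, so it is again generating. Thus no move in this phase is terminating.

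The deviation happens when $|U|=4$, say $U=\{a,\bar a,b,\bar b\}$ with $b\notin\{a,\bar a\}$. The first player removes one vertex, and by symmetry I may call it $a$; the set $\{\bar a,b,\bar b\}$ still generates. The second player now removes $b$, leaving $\{\bar a,\bar b\}$. Since $b\ne a$ and $b\ne\bar a$, the strings $a$ and $b$ agree in some coordinate, so $\bar a$ and $\bar b$ agree there as well. Therefore $\{\bar a,\bar b\}$ is nongenerating, and the second player has made the terminating move and wins.

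Since $|V|=2^n\ge 8$ is a multiple of $4$, play does reach an antipodally closed $U$ of size exactly $4$ with the first player to move. This gives $\nim(\TER(Q_n))=0$. The only real obstacle is the characterization of the convex hull in $Q_n$ as the smallest subcube, which I would justify by noting two things: geodesics between two strings are exactly the monotone paths that change only the coordinates where the strings differ, and subcubes are convex.
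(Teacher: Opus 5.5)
Your proposal is correct and is essentially the paper's proof. Mirror antipodally until two antipodal pairs remain; then, after the first player removes $a$, remove one of the remaining vertices not antipodal to $a$. Your subcube facts supply what the paper leaves implicit: an antipodal pair generates, and two non-antipodal vertices $\bar a,\bar b$ lie in a proper convex subcube. The one slip is wording: the mirroring rule should apply only when more than $4$ vertices are unselected, not ``at least $4$'', since otherwise it conflicts with your deviation at $|U|=4$.
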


\begin{proof}
The second player always selects the antipodal vertex after the first player's selection until there are two pairs of antipodal vertices remaining. After the first player chooses one of the remaining four vertices, the second player ends the game by selecting one of the two remaining vertices not antipodal to the first player's last choice.
\end{proof}


\subsection{Complete multipartite graphs}
In this section we consider the \emph{complete multipartite graph} $G=K_{m_1,\ldots,m_k}$ with $k\ge2$, $m_1\le m_2\le \cdots \le m_k$ and $m_k\ge 2$. The parts of $G$ that contain only one vertex are called small, while the parts containing at least two vertices are called large. We let $\sigma$ be the number of small parts, that is, $\sigma:=|\{i\mid m_i=1\}|$. We also let $\lambda$ be the number of large parts, that is, $\lambda:=|\{i\mid m_i\ge 2\}|$. Note that $\lambda\geq 1$ by definition of a complete multipartite graph.

If $\lambda=1$, then $G$ is a complete split graph $K_\sigma+\overline{K_{m_k}}$. So we have the following result by Proposition~\ref{prop:splitTER}.

\begin{prop}
For complete multipartite graphs with $\lambda=1$, $\nim(\DNT(G))=\pty(\sigma)$ and $\nim(\TER(G))=1+\pty(\sigma)$.
\end{prop}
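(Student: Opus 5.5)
The plan is to reduce directly to Proposition~\ref{prop:splitTER}. First I identify the graph. With $\lambda=1$ and the ordering $m_1\le\cdots\le m_k$, only the last part is large. So $m_1=\cdots=m_{k-1}=1$, $m_k\ge 2$, and $\sigma=k-1$.

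Next I check that $G$ is a complete split graph. The $\sigma$ small parts are singletons, and since the graph is complete multipartite, any two vertices in different parts are adjacent. Hence the singleton vertices are pairwise adjacent and induce $K_\sigma$. The large part is an independent set of size $n:=m_k\ge 2$, and every one of its vertices is adjacent to every singleton vertex. Therefore $G\cong K_\sigma+\overline{K_{n}}$ with $n\ge2$. This is exactly the join in the definition of a complete split graph, with $m=\sigma$.

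Then I apply Proposition~\ref{prop:splitTER} with $m=\sigma$ and $n=m_k\ge 2$. It gives $\nim(\DNT(G))=\pty(\sigma)$ and $\nim(\TER(G))=1+\pty(\sigma)$. Nim-values depend only on the gamegraph, which is determined by $\mathcal{G}^\star$ and $\mathcal{N}^\star$, so they are invariant under graph isomorphism and the identification is legitimate.

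One degenerate case needs a remark. If $\sigma=0$, then $k=1$, which is excluded by the standing hypothesis $k\ge2$. Still, $K_0+\overline{K_n}=\overline{K_n}$ would only matter if Proposition~\ref{prop:splitTER} required $m\ge1$. Since $k\ge 2$ forces $\sigma\ge1$, this case does not arise. There is no real obstacle: the only step needing care is confirming the hypothesis $n\ge 2$ of Proposition~\ref{prop:splitTER}, which is precisely the condition $m_k\ge2$ defining a large part.
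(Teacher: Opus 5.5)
Your proposal is correct and takes the same route as the paper. For $\lambda=1$ the graph is the complete split graph $K_\sigma+\overline{K_{m_k}}$ with $m_k\ge 2$, and Proposition~\ref{prop:splitTER} immediately gives both nim-values. Your extra checks (that the singleton parts induce $K_\sigma$, that $m_k\ge 2$ is the required hypothesis, and that nim-values are invariant under graph isomorphism) only make explicit what the paper leaves implicit.
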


\begin{prop}
\label{prop:TERMaximalsCompleteBipartiteManyBigComponents}
For complete multipartite graphs with $\lambda\ge 2$, $\mathcal{N}^\star$ consists of sets that are the complement of a set that contains exactly two elements from a single part.
\end{prop}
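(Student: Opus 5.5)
Since $\mathcal{N}^\star=\complement(\mathcal{G})$ by the proposition relating the four families, it suffices to show that for $\lambda\ge 2$ the minimal generating sets of $G=K_{m_1,\ldots,m_k}$ are exactly the two-element sets $\{x,y\}$ with $x\ne y$ in the same part. Taking complements then gives the statement. A two-element subset of a small part is impossible, so such pairs automatically lie in large parts.

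First I record the distance structure. Two vertices in different parts are adjacent. Two distinct vertices in the same part are at distance $2$, and their geodesics are the paths $x,z,y$ with $z$ any vertex outside that part. Consequently the convex sets are easy to describe. If $x,y$ are distinct in part $A$, then $[\{x,y\}]\supseteq V\setminus A$. Because $\lambda\ge 2$, $V\setminus A$ contains some large part $B$ with two distinct vertices $u,w$. Their geodesics pass through every vertex outside $B$, and in particular through all of $A$. Hence $[\{x,y\}]=V$, so every such pair is generating.

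Next I show minimality and that there are no other minimal generating sets. A single vertex is convex, so no singleton generates; this makes every same-part pair minimal. Conversely, let $S$ be a generating set that contains no two vertices from a common part. Then any two vertices of $S$ are adjacent, so $S$ is a clique and hence convex, since geodesics between adjacent vertices have no interior vertices. Thus $[S]=S$. This set is a proper subset of $V$ because $S$ misses the second vertex of some large part, and so $S$ is not generating, a contradiction. Therefore every generating set contains a same-part pair. This shows $\mathcal{G}=\{\{x,y\}\mid x\ne y \text{ in a common part}\}$.

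The only real point requiring care is the use of $\lambda\ge 2$: a second large part is needed to recover the part $A$ containing the pair. When $\lambda=1$ the pair from the unique large part generates only $V\setminus A$ together with $x,y$. This explains the separate complete split graph treatment. Everything else is routine checking of geodesics.
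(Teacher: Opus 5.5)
Your proof is correct and takes the same route as the paper: identify $\mathcal{G}$ as the two-element subsets of a single part, then apply $\mathcal{N}^\star=\complement(\mathcal{G})$. You also supply the geodesic verification that the paper leaves as ``easy to see'': a second large part recovers the part containing the pair, and in the converse a set meeting each part at most once is a clique, hence convex.
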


\begin{proof}
It is easy to see that $\mathcal{G}$ consists of the sets that contain exactly two elements from a single part of $G$. The result now follows from the equality $\mathcal{N}^\star=\complement(\mathcal{G})$.
\end{proof}

\begin{prop}
For complete multipartite graphs with $\lambda \geq 2$, $\nim(\DNT(G))=\pty(V)$. 
\end{prop}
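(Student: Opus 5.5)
By Proposition~\ref{prop:TERMaximalsCompleteBipartiteManyBigComponents}, when $\lambda\ge 2$ every element of $\mathcal{N}^\star$ has the form $V\setminus\{x,y\}$, where $x$ and $y$ are two distinct vertices lying in a common (necessarily large) part. Every maximal nonterminating set therefore has cardinality $|V|-2$. Its parity is $\pty(|V|-2)=\pty(V)$, independent of the choice of $x$ and $y$.

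The terminal positions of $\DNT(G)$ are exactly the elements of $\mathcal{N}^\star$, so every terminal position has parity $\pty(V)$. Proposition~\ref{prop:all-same} then gives $\nim(\DNT(G))=\pty(V)$.

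The substance of the argument lies in Proposition~\ref{prop:TERMaximalsCompleteBipartiteManyBigComponents}, which we take as given. It uses $\lambda\ge2$ in an essential way: two vertices in the same large part have every vertex outside that part as a common neighbor, hence as a geodesic midpoint. Once the vertices of another large part are added, those in turn generate the remainder of the original part. The parity computation itself is routine.
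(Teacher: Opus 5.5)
Your proposal is correct and matches the paper's proof: the paper also observes that every element of $\mathcal{N}^\star$ is the complement of a two-element subset of a single part, so each has size $|V|-2$, and then applies Proposition~\ref{prop:all-same}. Your remark on why $\lambda\ge 2$ is needed, with common neighbors of two same-part vertices serving as geodesic midpoints, is also accurate.
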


\begin{proof}
This is a consequence of Propositions~\ref{prop:TERMaximalsCompleteBipartiteManyBigComponents} and~\ref{prop:all-same}.
\end{proof}

We are going to study $\TER(K_{m_1,\ldots,m_k})$ for $\lambda\ge 2$ through an option-preserving image.
Let $U$ be a finite multiset of nonnegative integers. In the \emph{multiset terminate game} $\TER(U)$, the players decrease one of the positive elements in $U$ by 1 in each turn. The game ends when all elements of $U$ are less than 2. 

For a position $P$ of $\TER(K_{m_1,\ldots,m_k})$, let $f(P)$ be the multiset consisting of the number of unmarked vertices in each component. Since $\mathcal{G}$ consists of the sets that contain exactly two elements from a single part of $G$,
\[
f:\TER(K_{m_1,\ldots,m_k})\to\TER(\m{m_1,\ldots,m_k})
\]
is an option-preserving map if $\lambda\ge 2$.

\begin{example}
Figure~\ref{fig:fimage1} shows a play in $\TER(K_{1,2,2})$ and its image in $\TER(\m{1,2,2})$ under the option-preserving map $f$. Note that $\sigma=1$ and $\lambda=2$.
\end{example}

\begin{figure}[h]
\begin{tabular}{ccccccc}
\begin{tikzpicture}[xscale=.9,auto,,baseline=.7cm]
\node (1) at (1,1.5) [small vert] {};
\node (2) at (0,.8) [small vert] {};
\node (3) at (0.3,0.3) [small vert] {};
\node (4) at (2,.8) [small vert] {};
\node (5) at (1.7,0.3) [small vert] {};
\path [b] (1) to (2);
\path [b] (1) to (3);
\path [b] (1) to (4);
\path [b] (1) to (5);
\path [b] (2) to (4);
\path [b] (2) to (5);
\path [b] (3) to (4);
\path [b] (3) to (5);
\end{tikzpicture}
& $\longrightarrow$ &
\begin{tikzpicture}[xscale=.9,auto,,baseline=.7cm]
\node (1) at (1,1.5) [small vert,fill=purple] {};
\node (2) at (0,.8) [small vert] {};
\node (3) at (0.3,0.3) [small vert] {};
\node (4) at (2,.8) [small vert] {};
\node (5) at (1.7,0.3) [small vert] {};
\path [b] (1) to (2);
\path [b] (1) to (3);
\path [b] (1) to (4);
\path [b] (1) to (5);
\path [b] (2) to (4);
\path [b] (2) to (5);
\path [b] (3) to (4);
\path [b] (3) to (5);
\end{tikzpicture}
& $\longrightarrow$ &
\begin{tikzpicture}[xscale=.9,auto,,baseline=.7cm]
\node (1) at (1,1.5) [small vert,fill=purple] {};
\node (2) at (0,.8) [small vert,fill=purple] {};
\node (3) at (0.3,0.3) [small vert] {};
\node (4) at (2,.8) [small vert] {};
\node (5) at (1.7,0.3) [small vert] {};
\path [b] (1) to (2);
\path [b] (1) to (3);
\path [b] (1) to (4);
\path [b] (1) to (5);
\path [b] (2) to (4);
\path [b] (2) to (5);
\path [b] (3) to (4);
\path [b] (3) to (5);
\end{tikzpicture}
& $\longrightarrow$ &
\begin{tikzpicture}[xscale=.9,auto,,baseline=.7cm]
\node (1) at (1,1.5) [small vert,fill=purple] {};
\node (2) at (0,.8) [small vert,fill=purple] {};
\node (3) at (0.3,0.3) [small vert] {};
\node (4) at (2,.8) [small vert] {};
\node (5) at (1.7,0.3) [small vert,fill=purple] {};
\path [b] (1) to (2);
\path [b] (1) to (3);
\path [b] (1) to (4);
\path [b] (1) to (5);
\path [b] (2) to (4);
\path [b] (2) to (5);
\path [b] (3) to (4);
\path [b] (3) to (5);
\end{tikzpicture}
\\
\vspace{-3mm}
&&&&&& \\
$\m{1,2,2}$ & $\longrightarrow$ & $\m{0,2,2}$ & $\longrightarrow$ & $\m{0,1,2}$ & $\longrightarrow$ & $\m{0,1,1}$ \end{tabular}

\caption{
\label{fig:fimage1}
A play in $\TER(K_{1,2,2})$ and its image in $\TER(\m{1,2,2})$.
}
\end{figure}

The maximum element of the multiset $U$ is denoted by $m$. We define 
\[
\sigma:=|\m{u\in U\mid u=1}|,\quad \lambda:=|\m{u\in U\mid u\ge 2}|, \quad \nu:=|\m{u\in U\mid u=m}|-1, 
\]
and $p:=\pty(\|U\|)$, where $\|U\|:=\sum U$.

\begin{example}
If
\[
U=\m{0,\underbrace{1,1}_\sigma, \underbrace{2,2,2, \overbrace{3,3}^\nu ,3}_\lambda},
\]
then $\sigma=2$, $\lambda=6$, $\nu=2$, $m=3$, and $p=1$.
\end{example}

If $U$ is not terminal, then we define
\[
d:=m-\sum_{u\in U'} (u-1),
\text{ with } 
U':=\m{u\in U \mid u\ge 2}\setminus \m{m}. 
\]
If $U$ is terminal, then we let $d:=\infty$. The \emph{signature} of $U$ is $\rho(U):=(p,d)$. 

\begin{example}
If $U=\m{1,1,2,3,3}$, then $\rho(U)=(0,0)$ because $p=\pty(10)=0$, $U'=\{2,3\}$, and $d=3-(3-1)-(2-1)$. 

If $U=\m{0,1,1,1}$, then $\rho(U)=(1,\infty)$ because $p=\pty(3)=1$ and $U$ is a terminal position.
\end{example}

\begin{prop}
If $U$ is not a terminal position in the multiset terminate game and $(p,d)=\rho(U)$, then 
\[
\nim(U)=\begin{cases}
0, & p=0 \text{ and } d \leq 1 \\
1, & p=0 \text{ and } 2 \leq d \\
1, & p=1 \text{ and } d \leq 0 \\
2, & p=1 \text{ and } d \in \{1, 2\} \\
0, & p=1 \text{ and } 3 \leq d.
\end{cases}
\]
\end{prop}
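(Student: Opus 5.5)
We argue by strong induction on $\|U\|$. For each nonterminal $U$, write $F(U)$ for the value claimed in the statement, and set $F(U)=0$ for terminal $U$. Assuming $\nim(U')=F(U')$ for every option $U'$ of $U$, it suffices to check two things: no option of $U$ has value $F(U)$, and every value smaller than $F(U)$ is attained by some option. So the whole argument reduces to understanding how a single move changes the signature $\rho(U)=(p,d)$.

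\textbf{Step 1: effect of a move on the signature.} Every move flips $p$. The change in $d$ depends on which element is decreased.
\begin{itemize}
\item Decreasing an element equal to $1$ (possible iff $\sigma\ge1$) leaves $U'$ and $m$ unchanged, so $d$ is unchanged.
\item Decreasing an element of $U'$ lowers $\sum_{u\in U'}(u-1)$ by one. This holds also when a $2$ becomes a $1$ and drops out of $U'$. So $d$ increases by $1$.
\item Decreasing a copy of the maximum when $\nu\ge1$ keeps the maximum at $m$. One copy of $m$ in $U'$ is replaced by $m-1$, or drops out of $U'$ when $m=2$. Again $d$ increases by $1$.
\item Decreasing the maximum when $\nu=0$ makes the new maximum $m-1$, which may now be tied with other elements. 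The net effect is that $d$ decreases by $1$, unless the result is terminal. That happens exactly when $\lambda=1$ and $m=2$, and the option then has value $0$.
\end{itemize}
In summary, the options realize:
\begin{itemize}
\item $(1-p,d)$ iff $\sigma\ge1$;
\item $(1-p,d+1)$ iff $\lambda\ge2$;
\item $(1-p,d-1)$ (or a terminal position) iff $\nu=0$.
\end{itemize}
Two structural facts are also needed. If $\lambda=1$, then $\nu=0$ and $d=m\ge2$. The parity $p$ is tied to $\sigma$ and the large entries through $\|U\|$.

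\textbf{Step 2: the five cases.} In each row of the statement, compare with the values of the options of type $(1-p,d')$ for $d'\in\{d-1,d,d+1\}$.

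For $p=0,\ d\le1$: all options have $p=1$ and $d'\le2$, hence values $1$ or $2$. A terminal option cannot occur, since that needs $d=2$. So the value is $0$.

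For $p=0,\ d\ge2$: options with $p=1$ and $d'\ge1$ never have value $1$. We still need an option of value $0$, that is, $d'\ge3$ or a terminal position.
\begin{itemize}
\item If $\lambda\ge2$, use the move to $d+1\ge3$.
\item If $\lambda=1$ and $m=2$, the move to a terminal position works.
\item If $\lambda=1$ and $m\ge4$, the move to $d-1=m-1\ge3$ works.
\item If $\lambda=1$ and $m=3$, parity forces $\sigma$ odd, so $\sigma\ge1$. The move $1\to0$ gives $(1,3)$, which has value $0$.
\end{itemize}

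For $p=1$, the options have $p=0$, with values in $\{0,1\}$ or terminal value $0$.
\begin{itemize}
\item For $d\le0$: we must produce a $0$-option. Since $\lambda\ge2$ here, the move to $d+1\le1$ works.
\item For $d\in\{1,2\}$: we need options of values $0$ and $1$. A $0$ comes from $d'\le1$: either $d-1$ (if $\nu=0$) or $d$ (if $\sigma\ge1$). A $1$ comes from $d'\ge2$: either $d+1$, or $d$ when $d=2$.
\item For $d\ge3$: all options have $d'\ge2$ or are terminal. A terminal option would need $d=2$, so none occurs here, and every option has value $1$. The value is therefore $0$.
\end{itemize}

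\textbf{Main obstacle.} The delicate part is the $p=1,\ d\in\{1,2\}$ case, which is really a set of boundary subcases. There we must guarantee both a $0$-option and a $1$-option, even when some move types are unavailable: $\sigma=0$, or $\nu\ge1$ (no move to $d-1$), or $\lambda=1$ (no move to $d+1$).

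The plan for this case is as follows.
\begin{itemize}
\item When $\lambda=1$, we have $d=m\in\{2\}$. The options are then a terminal position (value $0$) and the move $1\to0$ to $(0,2)$ (value $1$). The latter requires $\sigma\ge1$, which holds by parity since $m=2$ is even and $p=1$.
\item When $\lambda\ge2$, the move to $d+1\ge2$ supplies the value $1$. For the $0$ we need either $\nu=0$ or $\sigma\ge1$.
\item The remaining configuration is $\nu\ge1$, $\sigma=0$, $d\in\{1,2\}$, $p=1$. Here I would use $d=m-\sum_{U'}(u-1)\le m-(m-1)=1$ together with the parity of $\|U\|$ to check that $d=1$ with a compatible parity cannot occur, or else handle it directly.
\end{itemize}
This parity bookkeeping is the step I expect to require the most care.
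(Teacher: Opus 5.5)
Your route is essentially the paper's: induction on $\|U\|$, the same four move types (the paper's facts (1)--(4) about the set $D$ of option signatures), and a row-by-row check, which the paper organizes as Table~\ref{tab:MultiPartCases}. The gap is that you stop at the one subcase that actually needs an argument, namely $p=1$, $d=1$, $\sigma=0$, $\nu\ge1$ (so $\lambda\ge2$). Your fallback, ``or else handle it directly'', is not available. In that configuration every move decreases a large entry while some copy of $m$ survives, so every option has signature $(0,2)$ and value $1$. That would give $\nim(U)=0$ rather than $2$, so the configuration must be shown impossible. Since $\sigma=0$, the nonzero entries of $U$ are exactly its large entries. If $\lambda=2$, then $\nu\ge1$ forces these to be $m,m$, so $\|U\|=2m$ is even, contradicting $p=1$. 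If $\lambda\ge3$, then $U'$ contains a copy of $m$ and at least one further entry $u\ge2$, so $d\le m-(m-1)-1=0$, contradicting $d=1$. This is how the paper justifies the $\rho(U)=(1,1)$ row.

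There is also a smaller slip in the same case. You write that a $0$-option comes from ``either $d-1$ (if $\nu=0$) or $d$ (if $\sigma\ge1$)''. For $d=2$ the move $1\to0$ produces $(0,2)$, of value $1$, so $\sigma\ge1$ does not supply a $0$-option there. For $d=2$ the $0$ must come from the terminal option when $\lambda=1$, and from the move to $(0,1)$ when $\lambda\ge2$. That move is always available, because your inequality $d\le m-(m-1)=1$ for $\nu\ge1$ shows that $d=2$ forces $\nu=0$, whatever $\sigma$ is. Also, for $p=1$, $d\le0$ you should record that no option has value $1$: every option has $d'\le d+1\le1$, and none is terminal. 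With these repairs the proof is complete. Your treatment of the $\lambda=1$, $m=2$ boundary (terminal option plus the $1\to0$ move, with $\sigma\ge1$ forced by parity) is in fact more careful than the paper's table. There, fact (1) and the boxed entries in the $(1,2)$ row gloss over $U=\m{2,1}$, where $\lambda=1$ and the $d-1$ move is replaced by a terminal option.
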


\begin{table}
\footnotesize
\renewcommand{\arraystretch}{1.3}
\setlength{\tabcolsep}{3pt}
\begin{tabular}{|c|c||c||c|c||c|c||c|c||c|c||c|c|}
\hline 
\multicolumn{2}{|c||}{$\rho(U)$} &  & \multicolumn{2}{c||}{$\sigma>0$} & \multicolumn{2}{c||}{$\lambda>1$} & \multicolumn{2}{c||}{$\nu=0$} & \multicolumn{2}{c||}{%
\renewcommand{\arraystretch}{.7}
\begin{tabular}{c}
$\lambda=1$\\
$m=2$\\
\end{tabular}} & \multicolumn{2}{c|}{$\nim(U)$}\\
\hline 
$p$ & $d$ & $\tilde{p}$ & $\tilde{d}$ & $\nim(\tilde{U})$ & $\tilde{d}$ & $\nim(\tilde{U})$ & $\tilde{d}$ & $\nim(\tilde{U})$ & $\tilde{d}$ & $\nim(\tilde{U})$ & $\nim\circ\Opt$ & mex\\
 &  & $1-p$ & $d$ &  & $d+1$ &  & $d-1$ &  & $\infty$ &  &  & \\
\hline 
0 & $\le-1$ & 1 & $\le-1$ & 1 & $\le0$ & 1 & $\le-2$ & 1 &  &  & $\emptyset,\{1\}$ & 0\\
0 & 0 & 1 & 0 & 1 & 1 & 2 & $-1$ & 1 &  &  & $\emptyset,\{1,2\}$ & 0\\
0 & 1 & 1 & 1 & 2 & 2 & 2 & 0 & 1 &  &  & $\emptyset,\{1,2\}$ & 0\\
\hline 
0 & 2 & 1 & 2 & 2 & 3 & $\text{\fbox{0}}$ & 1 & 2 & $\infty$ & $\text{\fbox{0}}$ & $\{0\}$,$\{0,2\}$ & 1 \\
0 & 3 & 1 & 3 & $\text{\fbox{0}}$ & 4 & $\text{\fbox{0}}$ & 2 & 2 &  &  & $\{0\}$,$\{0,2\}$ & 1 \\
0 & $\ge4$ & 1 & $\ge4$ & $\text{\fbox{0}}$ & $\ge5$ & $\text{\fbox{0}}$ & $\ge3$ & $\text{\fbox{0}}$ &  &  & $\{0\}$ & 1 \\
\hline 
1 & $\le0$ & 0 & $\le0$ & $\text{\fbox{0}}$ & $\le1$ & $\text{\fbox{0}}$ & $\le-1$ & $\text{\fbox{0}}$ &  &  & $\{0\}$ & 1\\
\hline 
1 & 1 & 0 & 1 & $\text{\fbox{0}}$ & 2 & $\text{\dbox{1}}$ & 0 & $\text{\fbox{0}}$ &  &  & $\{0,1\}$ & 2 \\
1 & 2 & 0 & 2 & 1 & 3 & $\text{\fbox{1}}$ & 1 & $\text{\dbox{0}}$ & $\infty$ & 0 & $\{0,1\}$ & 2\\
\hline 
1 & $\ge3$ & 0 & $\ge3$ & $\text{\fbox{1}}$ & $\ge4$ & $\text{\fbox{1}}$ & $\ge2$ & $\text{\fbox{1}}$ &  &  & $\{1\}$ & 0\\
\hline 
\end{tabular}

\bigskip
\caption{
\label{tab:MultiPartCases}
Case analysis for the computation of $\nim(U)$ based on the signatures of $U$ and $\tilde{U}$. The column labeled with $\nim\circ\Opt$ shows a subset and a superset of $\nim(\Opt(U))$.
}
\end{table}

\begin{proof}
We argue by induction on $\|U\|$. Since $U$ is not terminal, $\lambda\ge 1$ and $m\ge 2$. For a possible option $\tilde{U}$ of $U$, we let $(\tilde{p},\tilde{d}):=\rho(\tilde{U})$. Note that $\|\tilde{U}\|=\|U\|-1$ and $\tilde{p}=1-p$. Let $D:=\{\tilde{d}\mid \tilde{U}\in \Opt(U)\}$. It is easy to check that
\begin{enumerate}
\item
$d-1\in D$ if and only if $\nu=0$;
\item
$d\in D$ if and only if $\sigma>0$;
\item
$d+1\in D$ if and only if $\lambda>1$;
\item
$\infty\in D$ if and only if $\lambda=1$ and $m=2$. 
\end{enumerate}
Table~\ref{tab:MultiPartCases} shows the computation of $\nim(U)=\mex(\nim(\Opt(U)))$ depending on the signature of $U$ using induction. Each $\tilde{d}$ column shows a possibility that might or might not occur depending on the condition shown at the top of the column. The corresponding nim-values  provide a superset of $\nim(\Opt(U))$. Nim-values contained in the same type of box indicate that one of the possible $\tilde{d}$ values must occur. The corresponding nim-values provide a subset of $\nim(\Opt(U))$. We justify the three nontrivial such claims below.

First, consider the $\rho(U)=(0,2)$ case. Suppose $3\not\in D$ and $\infty\not\in D$. Then $\lambda=1$ and $m>2$. This gives the contradiction $2=d=m>2$.

Next, consider the $\rho(U)=(0,3)$ case. Suppose $3\not\in D$ and $4\not\in D$. Then $\sigma=0$, $\lambda=1$ and $m=3$. This gives the contradiction $p=1$.

Finally, consider the $\rho(U)=(1,1)$ case. Since $\lambda=1$ implies $d=m>2$, we must have $\lambda>1$ and so $2\in D$. 
Suppose $1\not\in D$ and $0\not\in D$. Then $\sigma=0$ and $\nu\ge 1$. Since $p=1$, we must have $\lambda\ge 3$. This gives the contradiction $1=d<m-(m-1)-1=0$.
\end{proof}

The next result is an immediate consequence.  

\begin{prop}
For complete multipartite graphs with $\lambda \geq 2$,
\[
\nim(\TER(K_{m_1,\ldots,m_k}))=\begin{cases}
0, & |V| \text{ even and } d \leq 1 \\
1, & |V| \text{ even and } 2 \leq d \\
1, & |V| \text{ odd and } d \leq 0 \\
2, & |V| \text{ odd and } d \in \{1, 2\} \\
0, & |V| \text{ odd and } 3 \leq d,
\end{cases}
\]
where $d:=m_k - \sum_{i=1}^{k-1}(m_i-1)$.
\end{prop}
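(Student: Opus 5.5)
The plan is to transfer the preceding proposition on the multiset terminate game to the graph through the option-preserving map
\[
f:\TER(K_{m_1,\ldots,m_k})\to\TER(\m{m_1,\ldots,m_k}).
\]
By the result on option-preserving maps (\cite[Proposition 4.17]{Basic}), $\nim(f(P))=\nim(P)$ for every position $P$. The starting position $\emptyset$ maps to $U:=\m{m_1,\ldots,m_k}$, so $\nim(\TER(K_{m_1,\ldots,m_k}))=\nim(U)$. It then remains to check that the signature $\rho(U)=(p,d)$ matches the quantities in the statement.

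Before that I would make sure the map really is option-preserving when $\lambda\ge 2$. By Proposition~\ref{prop:TERMaximalsCompleteBipartiteManyBigComponents}, $\mathcal{G}$ consists of the pairs of vertices lying in a common part. Hence a position $P$ is terminating exactly when every part has at most one unselected vertex, that is, when every entry of $f(P)$ is less than $2$.

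Consequently, a nonterminal position $P$ has as options all $P\cup\{v\}$ with $v$ unselected. The images of these options are exactly the multisets obtained from $f(P)$ by decreasing a positive entry by one. Terminal positions map to terminal multisets, so $f(\Opt(P))=\Opt(f(P))$. This is the point flagged in the text, so I expect only a brief verification to be needed.

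Next comes the parameter matching. We have $\|U\|=\sum m_i=|V|$, so $p=\pty(|V|)$: the case $p=0$ is $|V|$ even and $p=1$ is $|V|$ odd. Since $\lambda\ge2$, $U$ is not terminal, so $d$ is finite and
\[
d=m-\sum_{u\in U'}(u-1), \qquad U'=\m{u\in U\mid u\ge 2}\setminus\m{m}.
\]
Here $m=m_k$, and removing one copy of $m$ leaves the multiset $\m{m_1,\ldots,m_{k-1}}$ restricted to its entries that are at least $2$. Entries equal to $1$ contribute $m_i-1=0$. There are no zero entries initially, since every part is nonempty, so no negative terms $m_i-1=-1$ arise. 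Therefore
\[
d=m_k-\sum_{i=1}^{k-1}(m_i-1).
\]

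Substituting into the case formula of the preceding proposition gives the five cases of the statement. The only subtle point, and the main thing I would double-check, is the absence of $0$ entries in the starting multiset, since the graph's $d$ sums over all parts while the multiset's $d$ sums only over entries at least $2$.
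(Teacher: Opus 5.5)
Your proposal is correct and follows the same route as the paper. The paper presents this result as an immediate consequence of the option-preserving map $f:\TER(K_{m_1,\ldots,m_k})\to\TER(\m{m_1,\ldots,m_k})$ combined with the preceding proposition on the multiset terminate game; your checks that $f$ is option-preserving and that $\rho(\m{m_1,\ldots,m_k})=(\pty(|V|),\,m_k-\sum_{i=1}^{k-1}(m_i-1))$, using that no part is empty, fill in details the paper leaves implicit.
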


\subsection{Wheel graphs}
For $n \geq 5$, we define $W_n$ to be the \emph{wheel graph} with $n$ total vertices $\{v_1,\ldots,v_{n-1},c\}$, where $c$ is the center and $v_i$ is adjacent to $v_{i+1}$  if the indices are considered modulo $n-1$. 

\begin{prop}
\cite[Proposition 7.22]{BEMSS2024}
For wheel graphs, $\mathcal{N}$ consists of the complements of sets containing two neighboring non-central vertices.  
\end{prop}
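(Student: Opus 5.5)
The statement says that for the wheel $W_n$ with $n\ge5$, a maximal nongenerating set is exactly the complement of $\{v_i,v_{i+1}\}$ for some $i$. So I would show: (a) every such complement is nongenerating, (b) it is maximal, and (c) every nongenerating set lies inside one of them. By the relation $\mathcal{N}=\complement(\mathcal{G}^\star)$ from the proposition recalled in the Preliminaries, this is the same as saying $\mathcal{G}^\star=\{\{v_i,v_{i+1}\}\}$, i.e., the minimal terminating sets are the pairs of neighboring rim vertices. I would phrase the argument directly in terms of convex hulls.

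First I would record the metric of $W_n$. The center $c$ is adjacent to every rim vertex. Two rim vertices are at distance $1$ if they are neighbors and at distance $2$ otherwise. For nonadjacent rim vertices $v_i,v_j$, the geodesics are $v_i c v_j$ together with $v_iv_{i+1}v_j$ when $j=i+2$ (and symmetrically).

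For (a), take $P=\{v_i,v_{i+1}\}^c$ and check that $P$ is convex. Any geodesic between two vertices of $P$ that passes through $v_i$ must be a length-$2$ path $v_{i-1}v_iv_{i+1}$ or $v_{i-2}v_{i-1}v_i$ type with endpoints in $P$. The only candidate is $v_{i-1}v_iv_{i+1}$, which needs $v_{i+1}\in P$, and that fails. The same holds for $v_{i+1}$. Hence $[P]=P\ne V$. For (b), adding either $v_i$ or $v_{i+1}$ to $P$ gives a set that generates. For example, $P\cup\{v_i\}$ contains $v_i$ and $v_{i+2}$. The vertex $v_{i+1}$ lies on the geodesic $v_iv_{i+1}v_{i+2}$, and $v_i$ and $v_{i+2}$ are at distance $2$ because $n-1\ge4$. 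So the hull contains $v_{i+1}$ and is all of $V$.

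For (c), the main obstacle, let $S$ be nongenerating. I would show that $S^c$ contains two adjacent rim vertices. Suppose instead that no two rim vertices missing from $S$ are adjacent. Then each missing rim vertex $v_j$ has both neighbors $v_{j-1},v_{j+1}$ in $S$, and these are at distance $2$, so $v_j\in[S]$. Thus $[S]$ contains every rim vertex. If $c\notin S$, then $c$ lies on $v_1cv_3$, a geodesic between two rim vertices, so $c\in[S]$. Hence $[S]=V$, a contradiction. The degenerate cases, where $S$ contains few rim vertices, are covered by the same contrapositive, since any rim gap of size at least $2$ already produces an adjacent missing pair. Therefore $S\subseteq\{v_i,v_{i+1}\}^c$ for some $i$, and combined with (a) and (b) this gives the characterization of $\mathcal{N}$.
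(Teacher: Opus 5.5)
Your argument is correct and complete. The paper gives no proof of this statement: it is quoted from \cite[Proposition 7.22]{BEMSS2024}, and the paper only uses it to read off $\mathcal{G}^\star$ by complementation. So your proposal is a self-contained replacement for a citation, not a different route to the same argument.

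What makes it work is that every distance in $W_n$ is at most $2$ and $c$ is adjacent to every vertex. As a result, the convexity check in (a), the one-step generation in (b), and the contrapositive in (c) are each short local verifications. You also correctly identify where $n\ge 5$ is needed in (b) and (c): the rim has length $n-1\ge 4$, so $v_{j-1}$ and $v_{j+1}$ (and $v_1,v_3$) are nonadjacent.

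One small clean-up in (a). The path $v_{i-2}v_{i-1}v_i$ is irrelevant, since $v_i$ is an endpoint there. The clean statement is this: a geodesic with $v_i$ as an interior vertex has length $2$, and its endpoints are two nonadjacent neighbors of $v_i$. Since $c$ is adjacent to everything, these can only be $v_{i-1}$ and $v_{i+1}$, and $v_{i+1}\notin P$.

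Similarly, for $n=5$ antipodal rim vertices have two common rim neighbors, which your ``and symmetrically'' should be read as covering. Neither point affects the validity of the proof.
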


The following is an immediate consequence since $\mathcal{G}^*=\complement(\mathcal{N})$. 

\begin{corollary}
For wheel graphs, $\mathcal{G}^*$ consists of pairs of neighboring non-central vertices.
\end{corollary}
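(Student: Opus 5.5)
The corollary follows directly from the preceding proposition via the complement map in Remark~\ref{remark:transversal complement voodoo}. By the Proposition from \cite[Subsection 2.2]{SiebenHypergraph}, $\mathcal{N}=\complement(\mathcal{G}^\star)$. Since $\complement$ is an involution on families of subsets of $V$, this is equivalent to $\mathcal{G}^\star=\complement(\mathcal{N})$.

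The proposition \cite[Proposition 7.22]{BEMSS2024} says that $\mathcal{N}$ consists of the sets $\{v_i,v_{i+1}\}^c$ with indices taken modulo $n-1$. Applying $\complement$ to each member then gives exactly the pairs $\{v_i,v_{i+1}\}$ of neighboring non-central vertices.

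There is essentially no obstacle. The only thing to check is that the phrase ``complements of sets containing two neighboring non-central vertices'' in the cited proposition means complements of exactly such two-element sets. A superset reading would contradict the maximality of the sets in $\mathcal{N}$.

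As a sanity check, one can argue directly that each such pair is terminating. If $v_i$ and $v_{i+1}$ are removed, then every geodesic between remaining vertices can be routed through $c$ or along the rim without passing through $v_i$ or $v_{i+1}$. Hence $[\{v_i,v_{i+1}\}^c]\neq V$. The case $W_5$ in Example~\ref{ex:W5} agrees with this, since there the terminal positions of $\TER(W_5)$ contain two neighboring rim vertices.
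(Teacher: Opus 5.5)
Your argument is correct and is exactly the paper's: the corollary is read off from the cited description of $\mathcal{N}$ for wheels via $\mathcal{G}^\star=\complement(\mathcal{N})$.

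One caution about your optional sanity check. As phrased, it only shows that each pair of remaining vertices has \emph{some} geodesic avoiding $v_i,v_{i+1}$, and that does not imply convexity: the same reasoning would wrongly make $\{v_i\}$ terminating, since $v_{i-1}$ and $v_{i+1}$ are also joined through $c$. What is needed, and true, is that \emph{no} geodesic between remaining vertices passes through $v_i$ or $v_{i+1}$, because the only geodesics with interior vertex $v_i$ or $v_{i+1}$ are $v_{i-1}v_iv_{i+1}$ and $v_iv_{i+1}v_{i+2}$.
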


We will prove the following result for all $n$ in Proposition~\ref{thm:wheel-dnt-general}. We include the alternate proof because of its simplicity.

\begin{prop}
\label{wheel-dnt-odd}                   
For wheel graphs with odd $n$, $\nim(\DNT(W_{n}))=1$.
\end{prop}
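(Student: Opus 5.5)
The plan is to split $\DNT(W_n)$ into a game sum: an independent move on the center $c$, plus a game on the rim. Then the second summand is shown to have nim-value $0$ by an antipodal pairing argument.

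\textbf{Step 1: describe the nonterminating positions.} By the Corollary above, $\mathcal{G}^*$ consists of the pairs $\{v_i,v_{i+1}\}$ of neighboring non-central vertices. A position $P$ is terminating exactly when it contains some element of $\mathcal{G}^*$. So $P$ is nonterminating if and only if $P\setminus\{c\}$ is an independent set in the rim cycle $C_{n-1}$ on $v_1,\ldots,v_{n-1}$. Membership of $c$ plays no role.

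\textbf{Step 2: reduce to a sum with an option-preserving map.} Let $\mathsf{R}$ be the game whose positions are the independent sets of the rim $C_{n-1}$, where a move adds one rim vertex while keeping the set independent. Define
\[
f(P):=\bigl(P\setminus\{c\},\ *\pty(|P\cap\{c\}|+1)\bigr)\in \mathsf{R}+*1 .
\]
Here $\{c\}\not\subseteq P$ corresponds to $*1$ and $c\in P$ corresponds to $*0$. By Step 1, the options of $P$ are of two kinds: add $c$ if it is not yet selected, or add a rim vertex keeping the rim part independent. These correspond exactly to the options of the sum, so $f$ is option-preserving. Since $f(\emptyset)$ is the starting position of the sum, we get $\nim(\DNT(W_n))=\nim(\mathsf{R})\oplus 1$. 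It therefore suffices to show $\nim(\mathsf{R})=0$.

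\textbf{Step 3: the second player wins $\mathsf{R}$.} Since $n$ is odd, $n-1$ is even, so every rim vertex $v$ has an antipode $v'$ at distance $(n-1)/2\ge 2$; in particular $v$ and $v'$ are nonadjacent because $n\ge5$. The second player always answers $v$ with $v'$. We maintain the invariant that the current position $P$ is independent and antipodally symmetric.

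To check the invariant, suppose the first player adds $v$, so $P\cup\{v\}$ is independent. Then $v'\notin P$, since otherwise $v=(v')'\in P$ by symmetry. Also $v'$ has no neighbor in $P$, because any neighbor $u$ of $v'$ in $P$ would give the neighbor $u'$ of $v$ in $P$. Finally $v'$ is not adjacent to $v$. Hence $P\cup\{v,v'\}$ is independent and symmetric, so the second player always has a legal reply and never loses. This gives $\nim(\mathsf{R})=0$ and so $\nim(\DNT(W_n))=1$.

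\textbf{Main obstacle.} The only delicate point is Step 2: confirming that $c$ is truly independent of the rim play, i.e.\ that $c$ never appears in an element of $\mathcal{G}^*$. This follows directly from the Corollary, so the rest is routine.
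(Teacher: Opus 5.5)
Your proof is correct and is essentially the paper's argument. The paper has the second player win $\DNT(W_n)+*1$ by pairing each rim vertex with its antipode and pairing the center $c$ with the stone of $*1$; this is exactly your decomposition $\DNT(W_n)\cong\mathsf{R}+*1$ followed by the antipodal pairing on $\mathsf{R}$ (the two copies of $*1$ cancel), except that you spell out the independence of $c$ and the invariant behind the rim pairing, which the paper states in one line.
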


\begin{proof}
Since $n$ is odd, $W_n$ consists of an even cycle with a center vertex. The second player wins $\DNT(W_n)+*1$ using a pairing strategy, where a vertex on the rim is paired with the antipodal vertex and the  center vertex is paired with the stone from $*1$. The second player always selects the pair of the element chosen by the first player. The symmetry of the pairing of the rim vertices guarantees that the move of the second player prescribed by the strategy is always allowed.
\end{proof}

Berlekamp, Conway, and Guy~\cite{ww1} define the game Dawson's Chess $\dawsonschess(n)$ as follows. Two players start with $n \geq 0$ pins arranged in a row. They alternate, and on each turn they  knock down a pin and any of its standing adjacent neighbors.  The player to knock down the last pin wins. Our interest in Dawson's Chess comes from the next result.

If $P$ is a position of a removing game, then we use the notation $\DNT_P(G)$ and $\TER_P(G)$ to denote the games that start at position $P$.  

\begin{prop}
\label{prop:dawsonchessplusoneequivalence}
There is a surjective option-preserving map $f:\DNT_{\{v_{k}\}}(W_n)\to\dawsonschess(n-4)+*1$ for all $1 \leq k \leq n-1$.
\end{prop}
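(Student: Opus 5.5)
The plan is to make the map explicit by reading the nonterminating condition directly off the preceding corollary. By that corollary, $\mathcal{G}^*$ consists exactly of the pairs of neighboring non-central vertices. So a position $P$ of $\DNT(W_n)$ is legal exactly when $P$ contains no two rim vertices $v_i,v_{i+1}$ that are adjacent on the rim cycle $C_{n-1}$. Two consequences follow. The center $c$ can always be selected as long as it is unselected, and selecting it never affects which rim vertices are available. A rim vertex $v$ is available exactly when $v\notin P$ and neither rim neighbor of $v$ lies in $P$. In other words, $\DNT(W_n)$ is a sum of a "rim game" and a single-move game for $c$, which looks like $*1$.

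Next I would specialize to positions containing $v_k$. Once $v_k$ is selected, $v_{k-1}$ and $v_{k+1}$ are permanently unavailable. The remaining rim vertices $v_{k+2},v_{k+3},\ldots,v_{k-2}$ (indices mod $n-1$) form a path with $n-4$ vertices, and $v_{k\pm1}$ separate this path from the rest of the cycle. Its two endpoints are adjacent on the rim only to $v_{k\pm 1}$ besides each other's path neighbors, and $v_{k\pm1}$ can never be chosen, so the cyclic wraparound plays no role. Label the path vertices as pins $1,\ldots,n-4$ of Dawson's Chess. I would view a Dawson's Chess position as the set of standing pins. Then define
\[
f(P)=\bigl(S(P),\,\epsilon(P)\bigr),
\]
where $S(P)$ is the set of path vertices that are available, that is, not in $P$ and not adjacent to a rim vertex of $P$. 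The second coordinate $\epsilon(P)$ is $*1$ if $c\notin P$ and $*0$ otherwise. The starting position $\{v_k\}$ maps to the full row with $*1$, which is the starting position of $\dawsonschess(n-4)+*1$.

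Option preservation is then a direct check. An option of $P$ is either $P\cup\{c\}$, which maps to $(S(P),*0)$, the move in the $*1$ component. Otherwise it is $P\cup\{v\}$ for an available rim vertex $v$, which is precisely a standing pin $v\in S(P)$. Its image is $S(P)$ with $v$ and its path neighbors removed, which is exactly knocking down pin $v$ together with its standing adjacent pins. Conversely, every Dawson move and the $*1$ move arise this way, so $\Opt(f(P))=f(\Opt(P))$. The only point requiring care is that the "neighbors" in Dawson's Chess are neighbors along the row; this holds because path adjacency coincides with rim adjacency on these vertices.

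Surjectivity follows by induction along plays. Every position of $\dawsonschess(n-4)+*1$ is reached from the start by a finite sequence of moves. Option preservation lets us lift each move to a move in $\DNT_{\{v_k\}}(W_n)$ starting from $\{v_k\}$. The main obstacle is bookkeeping rather than substance. One must handle the small case $n=5$, where the path is empty, and make sure the gamegraph conventions match: positions of Dawson's Chess should be taken as reachable standing-pin sets, so that $f$ lands in, and covers, the intended gamegraph.
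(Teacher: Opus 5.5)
Your proposal is correct and is essentially the paper's proof. You use the same map: pin $p_i$ stands exactly when neither the corresponding path vertex nor a rim neighbor of it is selected, and the stone remains exactly when $c$ is unselected. You verify option preservation the same way, and your lifting argument supplies the surjectivity that the paper leaves implicit.

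One small slip: for $n=5$ the path has $n-4=1$ vertex, not zero, since $v_{k+2}=v_{k-2}$ on $C_4$. Your general argument already covers this case without special treatment.
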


\begin{proof}
For notational convenience, assume without loss of generality that $k=n-2$. The vertices $v_{n-3}$ and $v_{n-1}$ cannot be selected since they are adjacent to $v_{n-2}$ and would result in termination.  Thus, the only legal moves in $\DNT_{\{v_{n-2}\}}(W_n)$ are in $\{v_1,\ldots, v_{n-4}, c\}$.  Let $p_1,\ldots,p_{n-4}$ be the pins of $\dawsonschess(n-4)$. For $P\in\DNT_{\{v_{n-2}\}}(W_n)$, let $f(P)$ be $\dawsonschess(n-4)+*1$ defined as follows. Pin $p_i$ of $f(P)$ is knocked down if and only if vertex $v_j$ of $W_n$ is selected in $P$ for some $|i-j|\le 1$. The single stone from $*1$ is taken if and only if vertex $c$ of $W_n$ is selected in $P$. 

Note that $f$ is an option-preserving map, since $v_i \in P$ implies that neither $v_{i-1}$ nor $v_{i+1}$ can be selected.  Since $v_i \in P$, this means that $p_{i-1}$, $p_i$, and $p_{i+1}$ have been knocked down in $f(P)$.   Since a vertex $v_j$ may be selected if and only if neither $v_{j-1}$ nor $v_{j+1}$ have been selected, a pin $p_j$ may be selected if and only if $p_{j-1}$ and $p_{j+1}$ have been selected. 
\end{proof}

The proof of the following is similar to that of the previous result.

\begin{prop}
There is a surjective option-preserving map $f:\DNT_{\{c,v_{k}\}}(W_n)\to\dawsonschess(n-4)$ for all $1 \leq k \leq n-1$.
\end{prop}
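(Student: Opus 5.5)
We mirror the proof of Proposition~\ref{prop:dawsonchessplusoneequivalence}, dropping the $*1$ summand because $c$ is already selected. By rotational symmetry of $W_n$ we may assume $k=n-2$.

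The first step is to identify the legal moves. By the Corollary describing $\mathcal{G}^*$ for wheel graphs, a position $P$ is nonterminating exactly when it contains no two neighboring rim vertices; the center $c$ places no constraint. Hence every position of $\DNT_{\{c,v_{n-2}\}}(W_n)$ has the form $\{c,v_{n-2}\}\cup S$. Here $S$ is a set of pairwise nonadjacent vertices among $v_1,\ldots,v_{n-4}$. The vertices $v_{n-3}$ and $v_{n-1}$ are never selectable since they neighbor $v_{n-2}$. So the only available moves are choices of $v_j$ with $1\le j\le n-4$ such that $v_{j-1},v_{j+1}\notin P$. The indices $0$ and $n-3$ correspond to $v_{n-1}$ and $v_{n-3}$, which are never in $P$, so the endpoints $j=1$ and $j=n-4$ impose only a one-sided condition.

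Next, we define $f(P)\in\dawsonschess(n-4)$ with pins $p_1,\ldots,p_{n-4}$. Pin $p_i$ is down if and only if $v_j\in P$ for some $j$ with $|i-j|\le1$ and $1\le j\le n-4$. Equivalently, the selected rim vertices are exactly the centers of the knocked-down blocks. A Dawson's Chess move knocks down a standing pin $p_j$ together with its standing neighbors, and is realized by selecting $v_j$.

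We then check that $f$ is option-preserving. A vertex $v_j$ is selectable in $P$ iff $v_{j-1},v_{j+1}\notin P$ and $v_j\notin P$, which is exactly the condition that no vertex within distance one of $j$ is selected. That holds iff pin $p_j$ is standing in $f(P)$. Moreover, $f(P\cup\{v_j\})$ is $f(P)$ with $p_j$ and its standing neighbors knocked down. This gives $\Opt(f(P))=f(\Opt(P))$.

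Surjectivity holds because every reachable Dawson's Chess position arises from some sequence of centers. Those centers are pairwise at distance at least two, so selecting them in the same order is a legal sequence of moves in the wheel game. Finally, $f(\{c,v_{n-2}\})$ is the starting position with all $n-4$ pins standing.

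The main obstacle is the well-definedness and injectivity-free nature of $f$: different selected sets might appear to give the same fallen-pin pattern. This causes no trouble, because option-preservation only requires the image of the option set to equal the option set of the image, and both are determined by which pins are standing. It is also worth checking the degenerate case $n=5$, where $\dawsonschess(1)$ has a single pin.
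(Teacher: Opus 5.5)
Your proof is correct and follows the paper's approach: the paper proves this with the same construction it uses for $\DNT_{\{v_k\}}(W_n)\to\dawsonschess(n-4)+*1$, simply dropping the $*1$ summand because $c$ is already selected. One small quibble: your aside that the selected rim vertices are ``exactly the centers of the knocked-down blocks'' is not literally true, since blocks can merge or be cut off at the ends, which is exactly why $f$ fails to be injective, but your argument never uses that remark.
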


\begin{example}
Figure~\ref{fig:map} shows the surjective option-preserving map 
$$
f:\DNT_{\{v_1\}}(W_7)\to\dawsonschess(3)+*1.
$$
The restriction $\DNT_{\{c,v_1\}}(W_7)\to\dawsonschess(3)$ is also surjective and option preserving. 
\end{example}

\begin{figure}[h]
\begin{tikzpicture}[xscale=2,yscale=1.5]
\draw[rounded corners,color=yellow,fill=yellow!20] (-.5,-.3) -- (.5,-.3) -- (.5,.3) -- (-.5,.3) -- cycle; 
\draw[rounded corners,color=yellow,fill=yellow!20] (-.5,-1.3) -- (.5,-1.3) -- (.5,-.68) -- (-.5,-.68) -- cycle; 

\node[inner sep=-1.5,circle] (t) at (0,2) {
\begin{tikzpicture}
\node at (0:0) {$\circ$};
\node at (30:.3) {$\circ$};
\node at (90:.3) {$\bullet$};
\node at (150:.3) {$\circ$};
\node at (210:.3) {$\circ$};
\node at (270:.3) {$\circ$};
\node at (330:.3) {$\circ$};
\end{tikzpicture}
};
\node[inner sep=-1.5,circle] (m1) at (-1,1) {
\begin{tikzpicture}
\node at (0:0) {$\circ$};
\node at (30:.3) {$\circ$};
\node at (90:.3) {$\bullet$};
\node at (150:.3) {$\circ$};
\node at (210:.3) {$\bullet$};
\node at (270:.3) {$\circ$};
\node at (330:.3) {$\circ$};
\end{tikzpicture}
};
\node[inner sep=-1.5,circle] (m2) at (-.3,0) {
\begin{tikzpicture}
\node at (0:0) {$\circ$};
\node at (30:.3) {$\circ$};
\node at (90:.3) {$\bullet$};
\node at (150:.3) {$\circ$};
\node at (210:.3) {$\bullet$};
\node at (270:.3) {$\circ$};
\node at (330:.3) {$\bullet$};
\end{tikzpicture}
};
\node[inner sep=-1.5,circle] (m2+) at (.3,0) {
\begin{tikzpicture}
\node at (0:0) {$\circ$};
\node at (30:.3) {$\circ$};
\node at (90:.3) {$\bullet$};
\node at (150:.3) {$\circ$};
\node at (210:.3) {$\circ$};
\node at (270:.3) {$\bullet$};
\node at (330:.3) {$\circ$};
\end{tikzpicture}
};
\node[inner sep=-1.5,circle] (m3) at (1,1) {
\begin{tikzpicture}
\node at (0:0) {$\circ$};
\node at (30:.3) {$\circ$};
\node at (90:.3) {$\bullet$};
\node at (150:.3) {$\circ$};
\node at (210:.3) {$\circ$};
\node at (270:.3) {$\circ$};
\node at (330:.3) {$\bullet$};
\end{tikzpicture}
};
\node[inner sep=-1.4,circle] (m4) at (0,1) {
\begin{tikzpicture}
\node at (0:0) {$\bullet$};
\node at (30:.3) {$\circ$};
\node at (90:.3) {$\bullet$};
\node at (150:.3) {$\circ$};
\node at (210:.3) {$\circ$};
\node at (270:.3) {$\circ$};
\node at (330:.3) {$\circ$};
\end{tikzpicture}
};
\node[inner sep=-1.5,circle] (b1) at (-1,0) {
\begin{tikzpicture}
\node at (0:0) {$\bullet$};
\node at (30:.3) {$\circ$};
\node at (90:.3) {$\bullet$};
\node at (150:.3) {$\circ$};
\node at (210:.3) {$\bullet$};
\node at (270:.3) {$\circ$};
\node at (330:.3) {$\circ$};
\end{tikzpicture}
};
\node[inner sep=-1.5,circle] (b2) at (1,0) {
\begin{tikzpicture}
\node at (0:0) {$\bullet$};
\node at (30:.3) {$\circ$};
\node at (90:.3) {$\bullet$};
\node at (150:.3) {$\circ$};
\node at (210:.3) {$\circ$};
\node at (270:.3) {$\circ$};
\node at (330:.3) {$\bullet$};
\end{tikzpicture}
};
\node[inner sep=-1.5,circle] (b3) at (.3,-1) {
\begin{tikzpicture}
\node at (0:0) {$\bullet$};
\node at (30:.3) {$\circ$};
\node at (90:.3) {$\bullet$};
\node at (150:.3) {$\circ$};
\node at (210:.3) {$\bullet$};
\node at (270:.3) {$\circ$};
\node at (330:.3) {$\bullet$};
\end{tikzpicture}
};
\node[inner sep=-1.5,circle] (b3+) at (-.3,-1) {
\begin{tikzpicture}
\node at (0:0) {$\bullet$};
\node at (30:.3) {$\circ$};
\node at (90:.3) {$\bullet$};
\node at (150:.3) {$\circ$};
\node at (210:.3) {$\circ$};
\node at (270:.3) {$\bullet$};
\node at (330:.3) {$\circ$};
\end{tikzpicture}
};
\draw[->] (t) -- (m1);
\draw[->] (t) to[bend left=18] (m2+);
\draw[->] (t) -- (m3);
\draw[->] (t) -- (m4);
\draw[->] (m1) -- (b1);
\draw[->] (m4) -- (b1);
\draw[->] (m3) -- (b2);
\draw[->] (m4) -- (b2);
\draw[->] (m2) -- (b3);
\draw[->] (m4) to[bend left=12] (b3+);
\draw[->] (m1) -- (m2);
\draw[->] (m3) -- (m2);
\draw[->] (b1) -- (b3);
\draw[->] (b2) -- (b3);
\draw[->] (m2+) -- (b3+);
\end{tikzpicture}
\hfil  
\newcommand\PP{\scalebox{.7}{\BlackPawnOnWhite}}%
\newcommand\pp{\scalebox{.7}{\color{grey}\BlackPawnOnWhite}}%
\begin{tikzpicture}[xscale=2,yscale=1.5]
\node[draw,rounded corners,inner sep=1.5] (t) at (0,2) {$\underset{+*1}{\sympawn\sympawn\sympawn}$};
\node[draw,rounded corners,inner sep=1.5] (m1) at (-1,1) {$\underset{+*1}{{\color{grey!70}{\sympawn\sympawn}}\sympawn}$};
\node[draw,rounded corners,inner sep=1.5]  (m2) at (0,0) {$\underset{+*1}{{\color{grey!70}{\sympawn\sympawn\sympawn}}}$};
\node[draw,rounded corners,inner sep=1.5]  (m3) at (1,1) {$\underset{+*1}{\sympawn{\color{grey!70}{\sympawn\sympawn}}}$};
\node[draw,rounded corners,inner sep=1.5]  (m4) at (0,1) {$\underset{+*0}{\sympawn\sympawn\sympawn}$};
\node[draw,rounded corners,inner sep=1.5]  (b1) at (-1,0) {$\underset{+*0}{{\color{grey!70}{\sympawn\sympawn}}\sympawn}$};
\node[draw,rounded corners,inner sep=1.5]  (b2) at (1,0) {$\underset{+*0}{\sympawn{\color{grey!70}{\sympawn\sympawn}}}$};
\node[draw,rounded corners,inner sep=1.5]  (b3) at (0,-1) {$\underset{+*0}{{\color{grey!70}{\sympawn\sympawn\sympawn}}}$};
\draw[->] (t) -- (m1);
\draw[->] (t) to[bend left=35] (m2);
\draw[->] (t) -- (m3);
\draw[->] (t) -- (m4);
\draw[->] (m1) -- (b1);
\draw[->] (m4) -- (b1);
\draw[->] (m3) -- (b2);
\draw[->] (m4) -- (b2);
\draw[->] (m2) -- (b3);
\draw[->] (m4) to[bend right=35] (b3);
\draw[->] (m1) -- (m2);
\draw[->] (m3) -- (m2);
\draw[->] (b1) -- (b3);
\draw[->] (b2) -- (b3);
\end{tikzpicture}

\caption{
\label{fig:map}
Gamegraphs $\DNT_{\{v_0\}}(W_7)$ and $\dawsonschess(3)+*1$. The positions boxed together in the first gamegraph are mapped to the same position on the second gamegraph by the option-preserving map $f$. The grayed out pins indicate that they have been knocked down.
}
\end{figure}

\begin{prop}
\label{prop:dawsonschessequivalence}
For wheel graphs,  
\[
\nim(\DNT(W_n))=\begin{cases}
0, & \nim(\dawsonschess(n-4))=0\\
1, & \text{otherwise}.
\end{cases}
\]
\end{prop}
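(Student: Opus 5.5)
We compute $\nim(\DNT(W_n))$ directly as the mex of the nim-values of the options of the starting position $\emptyset$. By the Corollary on wheel graphs, $\mathcal{G}^*$ consists of pairs of neighboring rim vertices. Hence every singleton $\{v_k\}$ and the singleton $\{c\}$ are nonterminating, and these are exactly the options of $\emptyset$.

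\emph{Options $\{v_k\}$.} By Proposition~\ref{prop:dawsonchessplusoneequivalence} there is a surjective option-preserving map from $\DNT_{\{v_k\}}(W_n)$ onto $\dawsonschess(n-4)+*1$. Surjectivity means the starting position goes to the starting position, so
\[
\nim(\{v_k\})=\nim(\dawsonschess(n-4))\oplus 1 .
\]
By rotational symmetry this value is the same for every $k$.

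\emph{Option $\{c\}$.} After $c$ is selected, the game is the avoidance game on the rim cycle: we must never select two adjacent rim vertices, with no further constraint. Its options are the positions $\{c,v_k\}$. By the companion result (the $\{c,v_k\}$ version of the Dawson map), each of these has nim-value $\nim(\dawsonschess(n-4))$. Therefore
\[
\nim(\{c\})=\mex\{\nim(\dawsonschess(n-4))\},
\]
which equals $0$ unless $\nim(\dawsonschess(n-4))=0$, in which case it equals $1$.

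\emph{Case analysis.} Write $a:=\nim(\dawsonschess(n-4))$.
\begin{itemize}
\item If $a=0$, the options have values $1$ (the $\{v_k\}$) and $1$ (the $\{c\}$). The option set is $\{1\}$, so the mex is $0$.
\item If $a\ne 0$, then $\{c\}$ has value $0$, so $0$ is among the option values. The $\{v_k\}$ have value $a\oplus 1$. This is never $1$, since $a\ne 0$. So $1$ is absent from the option set, and the mex is $1$.
\end{itemize}
This gives exactly the stated formula.

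The main obstacle is the middle step: confirming that the options of $\{c\}$ are exactly the $\{c,v_k\}$, and that the companion proposition applies to all $k$ simultaneously, so all these options share the single value $a$. This follows from the rotational symmetry of the rim together with the fact that selecting $c$ creates no terminating pair. A smaller point also needs checking: for small $n$, such as $n=5$ where $\dawsonschess(1)$ appears, the maps still behave as stated. The example of $W_5$ in Figure~\ref{fig:W5} gives value $1$, consistent with $\nim(\dawsonschess(1))=1$.
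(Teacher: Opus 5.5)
Your proposal is correct and matches the paper's proof: both reduce to the two kinds of first moves $\{c\}$ and $\{v_k\}$. The paper likewise identifies $\{v_k\}$ with $\dawsonschess(n-4)+*1$ and each option $\{c,v_k\}$ of $\{c\}$ with $\dawsonschess(n-4)$, then takes the mex separately in the cases $\nim(\dawsonschess(n-4))=0$ and $\nim(\dawsonschess(n-4))\neq 0$.
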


\begin{figure}[h]
\begin{tikzpicture}[xscale=1.8,yscale=1.3]
\node (c) at (0,-.3) {$\underset{\color{cyan}*0 \text{ or } *1}{\DNT(W_n)}$};
\node (l) at (2,-1) {$\underset{\color{cyan}*1 \text{ or } *(\ne 1)}{\dawsonschess(n-4)+ *1}$};
\node (r) at (-1.5,-1) {$\underset{\color{cyan}*1 \text{ or } *0}{\bullet}$};
\node (rr) at (-1.5,-2) {$\underset{\color{cyan}*0 \text{ or } *(\ne 0)}{\dawsonschess(n-4)}$};
\draw[->] (c) -- node[above right] {$\scriptstyle v_{n-1}$} (l);
\draw[->] (c) -- node[above left] {$\scriptstyle c$} (r);
\draw[->] (r)  -- (rr);
\end{tikzpicture}

\caption{
\label{fig:WheelDNTCases}
Schematic gamegraph for $\DNT(W_n)$, where each position is indicated by an isomorphic game. 
}
\end{figure}

\begin{proof}
Without loss of generality, the first player will select either $c$ or $v_{n-1}$ on the first move. This creates position $P=\{c\}$ or $Q=\{v_{n-1}\}$, respectively, as shown in Figure~\ref{fig:WheelDNTCases}. 
In position $P$ the second player will select a rim vertex, which creates a position equivalent to $\dawsonschess(n-4)$. Position $Q$ is equivalent to $\dawsonschess(n-4)+*1$ by Proposition~\ref{prop:dawsonchessplusoneequivalence}. 

If $\nim(\dawsonschess(n-4))=0$, then $\nim(P)=1=\nim(Q)$ so that $\nim(\DNT(W_n))=\mex\{1\}=0$.
If $\nim(\dawsonschess(n-4))>0$, then $\nim(P)=0$ and $\nim(Q)\ne 1$ so that $\nim(\DNT(W_n))=\mex\{0,\nim(Q)\}=1$.
\end{proof}

\begin{prop}\label{prop:dawsonschess-sequence}\cite[Page~89]{ww1}
The sequence of nim-values of $\dawsonschess(n)$ is eventually periodic with preperiod 
\begin{gather*}
\hat{0}1120311033224\hat{0}5\hat{2}\hat{2}3301130211045\hat{2}74
\\
\hat{0}1120311033224\hat{4}5\hat{5}\hat{2}3301130211045\hat{3}74
\end{gather*}
of length 68 and period 
\[
\hat{8}1120311033224\hat{4}5\hat{5}\hat{9}3301130211045\hat{3}74
\]
of length 34.
\end{prop}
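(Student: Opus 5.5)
This proposition is cited from Winning Ways \cite[Page~89]{ww1}, so the plan is to verify it rather than to derive new structure. The first step is to express $\dawsonschess(n)$ as an octal game. Knocking down a pin together with its standing neighbours means one of the following: removing $1$ pin from a row of length $1$; removing $2$ pins from the end of a row; or removing $3$ pins, which leaves zero, one, or two rows. The game is therefore the octal game $\cdot 137$. This gives the recursion
\[
\nim(\dawsonschess(n))=\mex\Bigl(\{\nim(\dawsonschess(n-2))\}\cup\{\nim(\dawsonschess(a))\oplus\nim(\dawsonschess(b)) \mid a+b=n-3,\ a,b\ge 0\}\Bigr)
\]
for $n\ge 3$, with base values $\nim(\dawsonschess(0))=0$, $\nim(\dawsonschess(1))=1$ and $\nim(\dawsonschess(2))=1$. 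Here $\oplus$ is the nim-sum, and the Sprague--Grundy theorem is what lets the two remaining rows be evaluated as a game sum. A removal of two pins from the interior of a row is impossible, because knocking down an interior pin also knocks down both of its neighbours.

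The second step is to compute the sequence from this recursion up to a sufficiently large $n$. By direct computation, the values listed in the statement appear, including the exceptional hatted positions. The third step is to invoke the Guy--Smith periodicity theorem for octal games. For an octal game with finite maximal removal $k$ (here $k=3$), if $\nim(\dawsonschess(n+p))=\nim(\dawsonschess(n))$ holds for all $n$ with $n_0\le n<2n_0+p+k$, then it holds for all $n\ge n_0$. We take $n_0=68$ and $p=34$. It then suffices to check the computed values up to $n=2\cdot 68+34+3=173$.

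The main obstacle is purely computational: producing and checking roughly $174$ values, each of which requires a mex over about $n/2$ nim-sums. This is routine by computer, and in the paper I would simply cite \cite{ww1} rather than reproduce the table. The only conceptual care needed is to translate the move rule correctly into the octal code $\cdot 137$, in particular to handle rows of length $1$ and $2$ properly. An error there would shift the whole sequence.
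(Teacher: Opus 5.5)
Your plan is correct and is precisely what underlies the citation. The paper supplies no argument of its own, and the result in \cite{ww1} rests on the same three steps: identifying Dawson's Chess with the octal game $\cdot 137$, computing the nim-sequence from the recursion you wrote (your move-rule translation and base values are right), and applying the octal periodicity theorem.

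The one slip is in the size of the computation. With $n_0=68$, $p=34$, $k=3$, the hypothesis is $\nim(\dawsonschess(n+34))=\nim(\dawsonschess(n))$ for $68\le n\le 172$. Checking this needs values through $n=206$, not $173$. Alternatively, the last exceptional value is at $n=51$ (the hatted $2$ in the second row versus the $9$ in the period), so you may take $n_0=52$. Then values through $n=174$ suffice, and periodicity from $52$ on gives the stated form with a preperiod of length $68$.
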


The hats in the previous result indicate where the entries differ.
Note that the following theorem is consistent with Proposition~\ref{wheel-dnt-odd}.  

\begin{thm}\label{thm:wheel-dnt-general}
For wheel graphs,  
\[
\nim(\DNT(W_n))=\begin{cases}
0, & n \in \{18,38\} \text{ or } n \text{ mod }  34 \in  \{8, 12,24,28,32\}\\
1, & \text{otherwise}.
\end{cases}
\]
\end{thm}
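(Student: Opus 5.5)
The plan is to combine Proposition~\ref{prop:dawsonschessequivalence} with the explicit nim-sequence of Dawson's Chess in Proposition~\ref{prop:dawsonschess-sequence}. By Proposition~\ref{prop:dawsonschessequivalence}, $\nim(\DNT(W_n))=0$ exactly when $\nim(\dawsonschess(n-4))=0$, and $\nim(\DNT(W_n))=1$ otherwise. So the theorem reduces to determining the set $Z:=\{m\ge 1 \mid \nim(\dawsonschess(m))=0\}$ and translating it by $n=m+4$. The restriction $m\ge1$ comes from $n\ge 5$ in the definition of $W_n$.

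To read off $Z$, I will index the displayed sequence from $m=0$. The first preperiod line covers $m=0,\ldots,33$, the second covers $m=34,\ldots,67$, and the period repeats for $m\ge 68$ with period $34$. I will assume this indexing convention for Proposition~\ref{prop:dawsonschess-sequence}; it is the standard one from Berlekamp, Conway, and Guy. The zeros are then located as follows.
\begin{itemize}
\item In the first line the zeros sit at offsets $0,4,8,14,20,24,28$. Offset $14$ is one of the hatted entries.
\item In the second line they sit at offsets $0,4,8,20,24,28$. The hatted entry at offset $14$ is now $4$.
\item In the period they sit at offsets $4,8,20,24,28$, since offset $0$ is now the hatted $8$.
\end{itemize}
Hence the positive zeros are $m=14$, $m=34$, and all $m$ with $m \bmod 34\in\{4,8,20,24,28\}$. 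The value $m=0$ is discarded.

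Translating by $n=m+4$ gives the answer. The residues $m\bmod 34\in\{4,8,20,24,28\}$ become $n\bmod 34\in\{8,12,24,28,32\}$. The two exceptional values become $n=18$ and $n=38$. Neither is covered by the residue list, since $18\bmod 34=18$ and $38\bmod 34=4$, so they must be listed separately, exactly as in the statement. Every other $n\ge5$ gives a nonzero Dawson's Chess value, so $\nim(\DNT(W_n))=1$ by Proposition~\ref{prop:dawsonschessequivalence}.

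As a sanity check, odd $n$ gives odd $m$. All the zero offsets listed above are even and the period $34$ is even, so no odd $m$ is a zero. Thus odd $n$ yields value $1$, consistent with Proposition~\ref{wheel-dnt-odd}.

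The only real obstacle is bookkeeping. I must get the indexing of the preperiod right, in particular whether the sequence starts at $\dawsonschess(0)$. I must also correctly track the hatted positions, where the preperiod and period differ at offsets $0$ and $14$, since these produce the two sporadic values $n=18$ and $n=38$. I would double-check small cases directly against the wheel: for example $n=8$ gives $m=4$, and $\nim(\dawsonschess(4))=0$ should match a direct computation of $\DNT(W_8)$.
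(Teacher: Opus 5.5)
Your proposal is correct and is the paper's argument. Like the paper, you use Proposition~\ref{prop:dawsonschessequivalence} to reduce the question to the zeros of $\nim(\dawsonschess(n-4))$ and read them off Proposition~\ref{prop:dawsonschess-sequence}. Your explicit bookkeeping (indexing from $\dawsonschess(0)$, tracking the hatted offsets $0$ and $14$, and shifting by $4$) correctly produces the residues $\{8,12,24,28,32\}$ and the sporadic values $n=18,38$, details the paper leaves implicit.
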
                   

\begin{proof}
By Proposition~\ref{prop:dawsonschessequivalence}, $\nim(\DNT(W_n))=0$ exactly when $\nim(\dawsonschess(n-4))=0$ and equals $1$ otherwise.  By Proposition~\ref{prop:dawsonschess-sequence}, $\nim(\dawsonschess(n-4))=0$ exactly in the cases in the statement of the theorem.
\end{proof}

The \emph{ordinal sum} $\mathsf{G}:\mathsf{H}$ of the gamegraphs $\mathsf{G}$ and $\mathsf{H}$ \cite{OrdinalSumImpartial,ONAG,ordinalSum} has position set $\mathsf{G}\times\mathsf{H}$ and 
\[
\Opt(p,q):=\begin{cases}
\Opt(p)\times\{q\}, & p\ne p_0 \\
(\Opt(p_0)\times\{q\})\cup(\{p_0\}\times\Opt(q)), & p=p_0,
\end{cases}
\]
where $p_0$ is the starting position of $\mathsf{G}$. This means a player can make a move either in $\mathsf{G}$ or in $\mathsf{H}$ but $\mathsf{H}$ is discarded as soon as a move is made in $\mathsf{G}$.

\begin{example}
Figure~\ref{fig:ordinalProd} shows the gamegraph $\mathsf{G}$ and the ordinal sum $*2:\mathsf{G}$. Positions of the form $(p_0,q)=(*2,q)$ in $*2:\mathsf{G}$ are shown as squares. The arrows between these squares represent moves in $\mathsf{G}$. The other arrows represent moves in $*2$. 
\end{example}

\begin{figure}[h]
\begin{tabular}{ccc}
\begin{tikzpicture}[xscale=1.5, yscale=.7,baseline=-46]
\node[med vert] (2)  at (1,2) {$\scriptstyle \mathbf{2}$};
\node[med vert] (a)  at (1.5,1) {$\scriptstyle \mathbf{0}$};
\node[med vert] (1)  at (0.5,1) {$\scriptstyle \mathbf{1}$};
\node[med vert] (0)  at (0,0) {$\scriptstyle \mathbf{0}$};
\path[a] (2) to (a);
\path[a] (2) to (1);
\path[a] (1) to (0);
\end{tikzpicture}
& $\qquad$ & 
\begin{tikzpicture}[xscale=1.5, yscale=.7]
\node (02) [med vert,rectangle]  at (1,2) {$\scriptstyle \mathbf{4}$};
\node (0a) [med vert,rectangle]  at (1.5,1) {$\scriptstyle \mathbf{2}$};
\node (01) [med vert,rectangle] at (0.5,1) {$\scriptstyle \mathbf{3}$};
\node (00) [med vert,rectangle] at (0,0) {$\scriptstyle \mathbf{2}$};

\node (12) [med vert]  at (1,1) {$\scriptstyle \mathbf{1}$};
\node (1a) [med vert]  at (1.5,0) {$\scriptstyle \mathbf{1}$};
\node (11) [med vert] at (.5,0) {$\scriptstyle \mathbf{1}$};
\node (10) [med vert] at (0,-1) {$\scriptstyle \mathbf{1}$};
 
\node (22) [med vert]  at (1,0) {$\scriptstyle \mathbf{0}$};
\node (2a) [med vert]  at (1.5,-1) {$\scriptstyle \mathbf{0}$};
\node (21) [med vert] at (.5,-1) {$\scriptstyle \mathbf{0}$};
\node (20) [med vert] at (0,-2) {$\scriptstyle \mathbf{0}$};
 
\path[a,magenta] (02) to (12);
\path[a,magenta] (01) to (11);
\path[a,magenta] (0a) to (1a);
\path[a,magenta] (00) to (10);
\path[a,magenta] (12) to (22);
\path[a,magenta] (11) to (21);
\path[a,magenta] (1a) to (2a);
\path[a,magenta] (10) to (20);
\path[a,magenta] (02) to[bend right=20] (22); 
\path[a,magenta] (01) to[bend right=20] (21); 
\path[a,magenta] (0a) to[bend right=20] (2a); 
\path[a,magenta] (00) to[bend right=20] (20); 
  
\path[a] (02) to (01);
\path[a] (02) to (0a);
\path[a] (01) to (00);
\end{tikzpicture}
\\
$\mathsf{G}$ & & $*2:\mathsf{G}$  
\end{tabular}

\caption{
\label{fig:ordinalProd}
The nim-values of the positions of $\mathsf{G}$ and the ordinal sum $*2:\mathsf{G}$. 
}
\end{figure}

This example suggests the following.

\begin{prop}
\label{prop:AddWinningOption}
If $\mathsf{G}$ is a gamegraph, then $\nim(*k:\mathsf{G})=k+\nim(\mathsf{G})$.
\end{prop}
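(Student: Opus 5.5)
We compute the nim-value of each position $(p,q)$ of $*k:\mathsf{G}$ by analyzing the two kinds of positions separately. First, for positions $(p,q)$ with $p\ne p_0$, the option set is $\Opt(p)\times\{q\}$. Since the second coordinate never changes again, the map $(p,q)\mapsto p$ is option-preserving from this part of the gamegraph onto the positions of $*k$ other than $*k$ itself. Hence $\nim(p,q)=\nim(p)$ for $p\ne p_0$, and in particular $\nim(*j,q)=j$ for every $j<k$ and every $q$.

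Second, we prove by induction on the position $q$ of $\mathsf{G}$ (in reverse topological order, i.e. using that every play is finite) that $\nim(p_0,q)=k+\nim(q)$, where $p_0=*k$. The options of $(*k,q)$ are the positions $(*j,q)$ with $j<k$, whose nim-values are exactly $0,1,\ldots,k-1$ by the first step, together with the positions $(*k,q')$ for $q'\in\Opt(q)$, whose nim-values are $k+\nim(q')$ by induction. So
\[
\nim(*k,q)=\mex\bigl(\{0,\ldots,k-1\}\cup\{k+\nim(q')\mid q'\in\Opt(q)\}\bigr).
\]
Since $\{0,\ldots,k-1\}$ is present and every other value is at least $k$, this mex equals $k+\mex(\{\nim(q')\mid q'\in\Opt(q)\})=k+\nim(q)$. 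The base case of a terminal $q$ is included: the second set is empty and the mex is $k$.

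Applying this to the starting position $(*k,q_0)$, where $q_0$ is the start of $\mathsf{G}$, gives $\nim(*k:\mathsf{G})=k+\nim(\mathsf{G})$. The only point needing care is the first step: the positions of $*k:\mathsf{G}$ with first coordinate not $p_0$ genuinely behave like $*j$. This is immediate from the definition of the ordinal sum, because $\mathsf{G}$ is discarded once a move is made in $*k$. The shift identity $\mex(\{0,\dots,k-1\}\cup(k+A))=k+\mex(A)$ is elementary.
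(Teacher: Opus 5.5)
Your proof is correct and follows essentially the same route as the paper: first $\nim(*j,q)=j$ for all $j<k$, then structural induction on $q$ using $\mex(\{0,\ldots,k-1\}\cup\{k+a\mid a\in A\})=k+\mex(A)$ to get $\nim(*k,q)=k+\nim(q)$. Your option-preserving argument for the first step just spells out what the paper dismisses as ``easy to see.''
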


\begin{proof}
It is easy to see that $\nim(*j,p)=j$ for all $j\in\{0,\ldots,k-1\}$ and $p\in\mathsf{G}$. Structural induction shows that
\[
\begin{aligned}
\nim(*k,p) 
& =\mex(\nim(\Opt(*k)\times\{p\})\cup\nim(\{*k\}\times\Opt(p))) \\
& =\mex(\{\nim(*j,p)\mid *j\in\Opt(*k)\} \cup\{\nim(*k,q)\mid q\in\Opt(p) \}) \\
& =\mex(\{0,\ldots,k-1\}\cup\{k+\nim(q)\mid q\in\Opt(p)\}) \\
& = k+\nim(p). 
\end{aligned}
\]
Hence $\nim(*k:\mathsf{G})=\nim(*k,p_0)=k+\nim(p_0)=k+\nim(\mathsf{G})$, where $p_0$ is the starting position of $\mathsf{G}$.
\end{proof}

\begin{prop}
\label{prop:sutjOP}
The games $\TER_{\{v_i\}}(W_n)$ and $*1:\DNT_{\{v_i\}}(W_n)$ have the same nim-value.
\end{prop}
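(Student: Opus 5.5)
The plan is to compare nim-values directly rather than build an explicit option-preserving map. By Proposition~\ref{prop:AddWinningOption}, $\nim(*1:\DNT_{\{v_i\}}(W_n))=1+\nim(\DNT_{\{v_i\}}(W_n))$. It therefore suffices to prove the stronger statement that every nonterminating position $P\supseteq\{v_i\}$ satisfies
\[
\nim_{\TER}(P)=1+\nim_{\DNT}(P),
\]
where the subscripts indicate the game in which $P$ is evaluated. Applying this with $P=\{v_i\}$ gives the claim.

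First I describe the options of such a $P$ in $\TER_{\{v_i\}}(W_n)$. By the corollary above, $\mathcal{G}^*$ consists of the pairs of neighboring rim vertices, so a position is terminating exactly when it contains two adjacent rim vertices. Let $P\supseteq\{v_i\}$ be nonterminating. Its options in $\TER$ split into two kinds:
\begin{itemize}
\item selecting a vertex keeps $P$ nonterminating; these are precisely the options of $P$ in $\DNT$;
\item selecting a vertex makes $P$ terminating; such an option is a terminal position of $\TER$ and has nim-value $0$.
\end{itemize}
The key observation is that the second kind is always present. Since $v_i\in P$ and $P$ is nonterminating, $v_{i+1}\notin P$. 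Selecting $v_{i+1}$ creates the terminating pair $\{v_i,v_{i+1}\}$. Hence every nonterminating position of $\TER_{\{v_i\}}(W_n)$ has an option of nim-value $0$.

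Next I argue by induction on the number of unselected vertices (equivalently, by structural induction on the gamegraph) for nonterminating $P\supseteq\{v_i\}$. Let $\mathcal{O}=\Opt_{\DNT}(P)$. Using the description of the options and the induction hypothesis applied to each $Q\in\mathcal{O}$,
\[
\nim_{\TER}(P)=\mex\bigl(\{0\}\cup\{1+\nim_{\DNT}(Q)\mid Q\in\mathcal{O}\}\bigr).
\]
For any finite set $A$ of nonnegative integers, $\mex(\{0\}\cup(1+A))=1+\mex(A)$. The right-hand side therefore equals
\[
1+\mex\bigl(\nim_{\DNT}(\mathcal{O})\bigr)=1+\nim_{\DNT}(P).
\]
The base case is contained in this computation. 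If $P$ is terminal in $\DNT$, then $\mathcal{O}=\emptyset$, so $\nim_{\TER}(P)=\mex\{0\}=1=1+0$. This case still relies on the terminating option $v_{i+1}$ being available.

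The only real point is the observation that a zero-valued terminating option always exists. This is exactly why the statement fixes the initial selection $v_i$: it guarantees an available neighbor $v_{i+1}$ whose selection ends the game. Without it, for example at $P=\emptyset$, the argument fails, which is why the proposition concerns $\TER_{\{v_i\}}$ rather than $\TER(W_n)$.

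If an option-preserving formulation is preferred, one could instead map each nonterminating $P$ to $(*1,P)$ and each terminating position to some $(*0,Q)$. However, the surjectivity and target choices for terminal positions are awkward, so I would present the nim-value induction above.
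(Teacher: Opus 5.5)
Your proof is correct, but it takes a numerical route where the paper takes a structural one. The paper collapses all terminal positions of $\TER_{\{v_i\}}(W_n)$ to a single point and does the same for the terminal positions $(*0,Q)$ of $*1:\DNT_{\{v_i\}}(W_n)$. It then observes that the two quotient gamegraphs are isomorphic via $P\mapsto(*1,P)$. Since canonical quotient maps are option-preserving, the nim-values agree without computing any of them.

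You instead prove $\nim_{\TER}(P)=1+\nim_{\DNT}(P)$ by induction, using $\mex(\{0\}\cup(1+A))=1+\mex(A)$, and then appeal to Proposition~\ref{prop:AddWinningOption}. In effect you show that $\nim_{\TER}(P)$ and $\nim(*1,P)$ satisfy the same recursion $x(P)=\mex(\{0\}\cup x(\Opt_{\DNT}(P)))$. This is the nim-value shadow of the paper's isomorphism.

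Both arguments rest on the same fact: every nonterminating $P\ni v_i$ has the terminal option $P\cup\{v_{i+1}\}$. The paper's isomorphism needs this, since otherwise $P$ would lack the arrow to the collapsed terminal point that $(*1,P)$ always has. The paper leaves it unstated, whereas you make it explicit.

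The paper's version fits the option-preserving-map machinery used throughout, and it gives a statement about gamegraphs rather than just about values. Yours is more elementary, and it directly produces the identity $a=1+\nim(\DNT_{\{v_i\}}(W_n))$ that the next theorem actually uses.

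The awkwardness you anticipated with an unquotiented option-preserving map is genuine. For suitable $j$, the terminating set $\{v_i,v_j,v_{j+1}\}$ is an option of both $\{v_i,v_j\}$ and $\{v_i,v_{j+1}\}$, so it has no consistent target of the form $(*0,P)$. The paper sidesteps this exactly by quotienting both sides.
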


\begin{proof}
Identifying the terminal positions in each game creates two isomorphic quotient gamegraphs. The canonical quotient maps are option-preserving and hence nim-value preserving.
\end{proof}

\begin{example}
Figure~\ref{fig:mapOrd} shows the gamegraphs for $\TER_{\{v_i\}}(W_5)$ and $*1:\DNT_{\{v_i\}}(W_5)$. Vertex $v_i$ is drawn on top. The two quotient maps identify all the shaded terminal positions.
\end{example}

\begin{figure}[h]
\begin{tikzpicture}[xscale=1.4,yscale=1.2]
\node[inner sep=-1.5,circle] (t) at (0,2) {
\begin{tikzpicture}
\node at (0:0) {$\circ$};
\node at (0:.3) {$\circ$};
\node at (90:.3) {$\bullet$};
\node at (180:.3) {$\circ$};
\node at (270:.3) {$\circ$};
\end{tikzpicture}
};
\node[draw=yellow,fill=yellow!20,inner sep=-1.5,circle] (m1) at (-.5,1) {
\begin{tikzpicture}
\node at (0:0) {$\circ$};
\node at (0:.3) {$\circ$};
\node at (90:.3) {$\bullet$};
\node at (180:.3) {$\bullet$};
\node at (270:.3) {$\circ$};
\end{tikzpicture}
};
\node[inner sep=-1.5,circle] (m2) at (-1.5,1) {
\begin{tikzpicture}
\node at (0:0) {$\circ$};
\node at (0:.3) {$\circ$};
\node at (90:.3) {$\bullet$};
\node at (180:.3) {$\circ$};
\node at (270:.3) {$\bullet$};
\end{tikzpicture}
};
\node[inner sep=-1.5,circle] (m3) at (1.5,1) {
\begin{tikzpicture}
\node at (0:0) {$\bullet$};
\node at (0:.3) {$\circ$};
\node at (90:.3) {$\bullet$};
\node at (180:.3) {$\circ$};
\node at (270:.3) {$\circ$};
\end{tikzpicture}
};
\node[draw=yellow,fill=yellow!20,inner sep=-1.5,circle] (m4) at (.5,1) {
\begin{tikzpicture}
\node at (0:0) {$\circ$};
\node at (0:.3) {$\bullet$};
\node at (90:.3) {$\bullet$};
\node at (180:.3) {$\circ$};
\node at (270:.3) {$\circ$};
\end{tikzpicture}
};
\node[draw=yellow,fill=yellow!20,inner sep=-1.4,circle] (b1) at (-2,0) {
\begin{tikzpicture}
\node at (0:0) {$\circ$};
\node at (0:.3) {$\circ$};
\node at (90:.3) {$\bullet$};
\node at (180:.3) {$\bullet$};
\node at (270:.3) {$\bullet$};
\end{tikzpicture}
};
\node[draw=yellow,fill=yellow!20,inner sep=-1.4,circle] (b2) at (-1,0) {
\begin{tikzpicture}
\node at (0:0) {$\circ$};
\node at (0:.3) {$\bullet$};
\node at (90:.3) {$\bullet$};
\node at (180:.3) {$\circ$};
\node at (270:.3) {$\bullet$};
\end{tikzpicture}
};
\node[inner sep=-1.4,circle] (b) at (0,0) {
\begin{tikzpicture}
\node at (0:0) {$\bullet$};
\node at (0:.3) {$\circ$};
\node at (90:.3) {$\bullet$};
\node at (180:.3) {$\circ$};
\node at (270:.3) {$\bullet$};
\end{tikzpicture}
};
\node[draw=yellow,fill=yellow!20,inner sep=-1.4,circle] (b3) at (1,0) {
\begin{tikzpicture}
\node at (0:0) {$\bullet$};
\node at (0:.3) {$\circ$};
\node at (90:.3) {$\bullet$};
\node at (180:.3) {$\bullet$};
\node at (270:.3) {$\circ$};
\end{tikzpicture}
};
\node[draw=yellow,fill=yellow!20,inner sep=-1.4,circle] (b4) at (2,0) {
\begin{tikzpicture}
\node at (0:0) {$\bullet$};
\node at (0:.3) {$\bullet$};
\node at (90:.3) {$\bullet$};
\node at (180:.3) {$\circ$};
\node at (270:.3) {$\circ$};
\end{tikzpicture}
};
\node[draw=yellow,fill=yellow!20,inner sep=-1.5,circle] (T1) at (-.5,-1) {
\begin{tikzpicture}
\node at (0:0) {$\bullet$};
\node at (0:.3) {$\circ$};
\node at (90:.3) {$\bullet$};
\node at (180:.3) {$\bullet$};
\node at (270:.3) {$\bullet$};
\end{tikzpicture}
};
\node[draw=yellow,fill=yellow!20,inner sep=-1.5,circle] (T2) at (.5,-1) {
\begin{tikzpicture}
\node at (0:0) {$\bullet$};
\node at (0:.3) {$\bullet$};
\node at (90:.3) {$\bullet$};
\node at (180:.3) {$\circ$};
\node at (270:.3) {$\bullet$};
\end{tikzpicture}
};
\draw[->] (t) -- (m1);
\draw[->] (t) -- (m2);  
\draw[->] (t) -- (m3);
\draw[->] (t) -- (m4);
\draw[->] (m2) -- (b);
\draw[->] (m2) -- (b1);
\draw[->] (m2) -- (b2);
\draw[->] (m3) -- (b3);
\draw[->] (m3) -- (b4);
\draw[->] (m3) -- (b);
\draw[->] (b) -- (T1);
\draw[->] (b) -- (T2);
\end{tikzpicture}
\hfil  
\begin{tikzpicture}[xscale=1.4,yscale=1.2]
\node[inner sep=-1.5,circle] (t) at (0,2) {
\begin{tikzpicture}
\node at (0:0) {$\circ$};
\node at (0:.3) {$\circ$};
\node at (90:.3) {$\bullet$};
\node at (180:.3) {$\circ$};
\node at (270:.3) {$\circ$};
\end{tikzpicture}
};
\node[draw=yellow,fill=yellow!20,inner sep=1,circle] (m1) at (0,1) {
$\star$
};
\node[inner sep=-1.5,circle] (m2) at (-.8,1) {
\begin{tikzpicture}
\node at (0:0) {$\circ$};
\node at (0:.3) {$\circ$};
\node at (90:.3) {$\bullet$};
\node at (180:.3) {$\circ$};
\node at (270:.3) {$\bullet$};
\end{tikzpicture}
};
\node[inner sep=-1.5,circle] (m3) at (.8,1) {
\begin{tikzpicture}
\node at (0:0) {$\bullet$};
\node at (0:.3) {$\circ$};
\node at (90:.3) {$\bullet$};
\node at (180:.3) {$\circ$};
\node at (270:.3) {$\circ$};
\end{tikzpicture}
};
\node[draw=yellow,fill=yellow!20,inner sep=1,circle] (b2) at (-.8,0) {
$\star$
};
\node[inner sep=-1.4,circle] (b) at (0,0) {
\begin{tikzpicture}
\node at (0:0) {$\bullet$};
\node at (0:.3) {$\circ$};
\node at (90:.3) {$\bullet$};
\node at (180:.3) {$\circ$};
\node at (270:.3) {$\bullet$};
\end{tikzpicture}
};
\node[draw=yellow,fill=yellow!20,inner sep=1,circle] (b3) at (.8,0) {
$\star$
};
\node[draw=yellow,fill=yellow!20,inner sep=1,circle] (T) at (0,-1) {
$\star$
};
\node[inner sep=1,circle] () at (0,-1.3) {};

\draw[dashed,blue,->] (t) -- (m1);
\draw[->] (t) -- (m2);  
\draw[->] (t) -- (m3);
\draw[->] (m2) -- (b);
\draw[dashed,blue,->] (m2) -- (b2);
\draw[dashed,blue,->] (m3) -- (b3);
\draw[->] (m3) -- (b);
\draw[dashed,blue,->] (b) -- (T);
\end{tikzpicture}

\caption{
\label{fig:mapOrd}
Gamegraphs for $\TER_{\{v_i\}}(W_5)$ and $*1:\DNT_{\{v_i\}}(W_5)$.
}
\end{figure}

\begin{thm}
For wheel graphs,  
\[
\nim(\TER(W_n))=\begin{cases}
2, &  n \text{ mod } 34 \in \{5, 6,10,11,25,26,30,31\}\\
1, & \text{otherwise}.
\end{cases}
\]
\end{thm}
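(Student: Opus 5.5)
The plan is to analyze the first move of $\TER(W_n)$, as in the proof of Proposition~\ref{prop:dawsonschessequivalence}. By rotational symmetry, the first player either selects the center $c$ or a rim vertex $v_i$. Neither move is terminal, since by the Corollary the elements of $\mathcal{G}^*$ are pairs of neighboring rim vertices. Write $D:=\nim(\dawsonschess(n-4))$. Then
\[
\nim(\TER(W_n))=\mex\{\nim(\TER_{\{c\}}(W_n)),\nim(\TER_{\{v_i\}}(W_n))\}.
\]

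\textbf{The rim-vertex option.} By Proposition~\ref{prop:sutjOP}, Proposition~\ref{prop:AddWinningOption}, and Proposition~\ref{prop:dawsonchessplusoneequivalence},
\[
\nim(\TER_{\{v_i\}}(W_n))=1+\nim(\DNT_{\{v_i\}}(W_n))=1+\nim(\dawsonschess(n-4)+*1).
\]
Here $\nim(\dawsonschess(n-4)+*1)$ is $D$ nim-added with $1$ (i.e.\ $D\oplus 1$), so it equals $1$ if $D=0$, equals $0$ if $D=1$, and is at least $2$ if $D\ge 2$. Hence this option has value $2$, $1$, or at least $3$, respectively.

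\textbf{The center option.} From $\{c\}$ every move selects some rim vertex $v_j$, reaching $\{c,v_j\}$. I will prove the analogue of Proposition~\ref{prop:sutjOP} for the pair $\{c,v_j\}$ with the same argument. Identifying terminal positions gives isomorphic quotients of $\TER_{\{c,v_j\}}(W_n)$ and $*1:\DNT_{\{c,v_j\}}(W_n)$, because the nonterminal positions of $\TER$ are exactly the $\DNT$ positions, and each terminating move corresponds to the extra move in $*1$. Combining this with Proposition~\ref{prop:AddWinningOption} and the option-preserving map $\DNT_{\{c,v_j\}}(W_n)\to\dawsonschess(n-4)$, every option of $\{c\}$ has nim-value $1+D\ge 1$. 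Therefore $\nim(\TER_{\{c\}}(W_n))=\mex\{1+D\}=0$.

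\textbf{Combining.} We get $\nim(\TER(W_n))=\mex\{0,1+(D\oplus 1)\}$. This equals $2$ exactly when $D=1$ and equals $1$ otherwise, since in the other cases the second entry is $2$ or at least $3$. Finally, I read off from Proposition~\ref{prop:dawsonschess-sequence} the indices $k=n-4$ with $\nim(\dawsonschess(k))=1$. In the period these are $k\equiv 1,2,6,7,21,22,26,27 \pmod{34}$, i.e.\ $n \bmod 34\in\{5,6,10,11,25,26,30,31\}$. I must check that none of the hatted entries, which are where the preperiod differs from the period, is a $1$. They are $0,2,4,5,3,8,9$, so the preperiod introduces no exceptions, unlike Theorem~\ref{thm:wheel-dnt-general}. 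As a sanity check, $n=5$ gives $D=\nim(\dawsonschess(1))=1$ and value $2$, matching Figure~\ref{fig:W5}.

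I expect the main obstacle to be justifying the quotient isomorphism for the start position $\{c,v_j\}$ carefully. It also requires care for small $n$: when $n-4$ is tiny, some of the $v_j$-adjacent pins overlap, and the wraparound on the rim must be handled.
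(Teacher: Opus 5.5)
Your proposal is correct and follows the paper's proof. It uses the same first-move decomposition, the same evaluation $1+\nim(\dawsonschess(n-4)+*1)$ of the rim option via Propositions~\ref{prop:sutjOP}, \ref{prop:AddWinningOption}, and~\ref{prop:dawsonchessplusoneequivalence}, and the same reading of the Dawson's Chess sequence; your explicit check that no hatted entry equals $1$ is a detail the paper leaves implicit. The only place you work harder than needed is the center option: the paper just notes that any nonterminal position containing a rim vertex $v_j$ has the immediately winning terminating move $v_{j+1}$, hence nonzero nim-value, so you can drop both the quotient isomorphism for $\{c,v_j\}$ and the map to $\dawsonschess(n-4)$, and with them your small-$n$ concerns.
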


\begin{proof}
By symmetry, the positions with a single rim vertex selected are essentially the same. So $a:=\nim(\{v_i\})$ is independent of the choice of $i$. A position that contains a rim vertex $v_i$ has nonzero nim-value since the next player can win by marking $v_{i+1}$. Hence $a>0$. Also $\nim(\{c\})=0$ since each option $\{c,v_i\}$ of $\{c\}$ has nonzero nim-value. Thus
\[
\nim(\TER(W_n))=\mex(\nim(\{\{c\},\{v_1\},\ldots,\{v_n-1\}\}))=\mex(\{0,a\})\in\{1,2\},
\]
as depicted in Figure~\ref{fig:WheelTERCases}. In fact, $\nim(\TER(W_n))$ is $2$ if $a=1$ and $1$ if $a>1$.

By Propositions~\ref{prop:dawsonchessplusoneequivalence}, \ref{prop:AddWinningOption}, and~\ref{prop:sutjOP},
\[
a=\nim(\TER_{\{v_i\}}(W_n))=\nim(*1:\DNT_{\{v_i\}}(W_n))=\nim(\dawsonschess(n-4)+*1)+1.
\]
This value is $1$ exactly when $\nim(\dawsonschess(n-4))=1$. So the result follows from Propositions~\ref{prop:dawsonschess-sequence}.
\end{proof}

\begin{figure}[h]
\begin{tikzpicture}[xscale=1.8,yscale=1.3]
\node (0) at (0,-.4) {$\underset{\color{cyan}*1\text{ or } *2}{\emptyset}$};
\node (c) at (-1,-.9) {$\underset{\color{cyan}*0}{\{c\}}$};
\draw[->] (0) -- node[above ]{$\scriptstyle c$} (c);
\node (r) at (1,-.9) {$\underset{\color{cyan}*a}{\{v_i\}}$};
\draw[->] (0) -- node[above ]{$\scriptstyle v_i$} (r);
\node (cr) at (-1,-1.9) {$\underset{\color{cyan}*(\ne 0)}{\{c,v_i\}}$};
\draw[->] (c) -- node[left]{$\scriptstyle v_i$} (cr);
\end{tikzpicture}

\caption{
\label{fig:WheelTERCases}
Partial gamegraph for $\TER(W_n)$.
}
\end{figure}

\subsection{Generalized wheel graphs}
The \emph{generalized wheel graph} $W_{m,n}$ with $m\ge 2$ and $n\ge 3$ is the join $\overline{K}_m+C_n$ with $m+n$ total vertices in
$V(\overline{K}_m)=\{c_1, \ldots, c_m\}$ and $V(C_n)=\{v_1, \ldots, v_n\}$.

\begin{prop}\label{prop:GeneralizedWheel3}
For generalized wheel graphs, \[\nim(\DNT(W_{m,3}))=1,\quad \nim(\TER(W_{m,3}))=2.\] 
\end{prop}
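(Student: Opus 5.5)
The plan is to recognize $W_{m,3}$ as a complete split graph and apply Proposition~\ref{prop:splitTER}. The rim $C_3$ is a triangle, so it is the complete graph $K_3$; this is the observation from the definition of cycle graphs that the construction with $n=3$ yields $K_3$. Hence
\[
W_{m,3}=\overline{K}_m+C_3=\overline{K}_m+K_3\cong K_3+\overline{K}_m,
\]
since the join is symmetric. This is exactly the complete split graph with clique part of size $3$ and independent part of size $m$.

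The hypothesis of Proposition~\ref{prop:splitTER} is that the independent part has at least two vertices, and this holds because $m\ge 2$ in the definition of generalized wheel graphs. As a sanity check, $\mathcal{G}=\{\{c_1,\ldots,c_m\}\}$ can also be seen directly:
\begin{itemize}
\item each $c_i$ is simplicial, because its neighborhood is the triangle $\{v_1,v_2,v_3\}$;
\item any two $c_i,c_j$ are at distance $2$, and every $v_k$ lies on a geodesic between them, so $\{c_1,\ldots,c_m\}$ generates $V$;
\item by \cite[Proposition 6.2]{BEMSS2024}, a generating set made of simplicial vertices is the unique minimal generating set.
\end{itemize}
Then $L^c=\{v_1,v_2,v_3\}$ has odd size.

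Proposition~\ref{prop:splitTER} with $m$ replaced by $3$ then gives $\nim(\DNT(W_{m,3}))=\pty(3)=1$ and $\nim(\TER(W_{m,3}))=1+\pty(3)=2$. Equivalently, these values follow from Propositions~\ref{prop:unique DNT} and~\ref{prop:unique TER}.

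There is no substantive obstacle here. The only point needing care is the identification $C_3=K_3$, because the paper formally defines $C_n$ only for $n\ge4$. I would add a sentence noting that the $3$-cycle is the triangle, so the join is indeed the complete split graph $K_3+\overline{K}_m$.
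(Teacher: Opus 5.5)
Your proposal is correct and is the paper's argument: it likewise observes that $C_3\cong K_3$, so $W_{m,3}=\overline{K}_m+K_3$ is a complete split graph, and then applies Proposition~\ref{prop:splitTER} with clique size $3$, where $m\ge 2$ supplies the required independent part. Your direct check that $\mathcal{G}=\{\{c_1,\ldots,c_m\}\}$ via simplicial vertices is a valid confirmation, but the proof does not need it.
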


\begin{proof}
Since $C_3$ is isomorphic to $K_3$, $W_{m,3}$ is a complete split graph. So the result follows from Proposition~\ref{prop:splitTER}.
\end{proof}

\begin{prop}\label{prop:max-genwheel}
For generalized wheel graphs with $n\ge 4$,
\[ 
\mathcal{N}^* = \{ \{c_i, c_j\}^c \mid i \ne j \} \cup
\{ \{v_k, v_\ell\}^c \mid  \text{$k\not=\ell$ and $v_k$, $v_\ell$ nonadjacent}\}. 
\]
\end{prop}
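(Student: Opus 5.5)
Since $\mathcal{N}^\star=\complement(\mathcal{G})$, the plan is to prove that the minimal generating sets of $W_{m,n}$ with $n\ge 4$ are exactly the pairs $\{c_i,c_j\}$ with $i\ne j$ and the pairs $\{v_k,v_\ell\}$ of nonadjacent rim vertices. Taking complements then gives the statement. The first step is to record the metric of $W_{m,n}=\overline{K}_m+C_n$. Every $c_i$ is adjacent to every $v_k$, two distinct hub vertices $c_i,c_j$ are at distance $2$, and two rim vertices are at distance $1$ or $2$ according to whether they are adjacent on $C_n$. So the diameter is $2$, and the geodesics between two vertices at distance $2$ are exactly the paths through their common neighbors.

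Next we show that the listed pairs generate. The common neighbors of $c_i$ and $c_j$ are all of $V(C_n)$, so $[\{c_i,c_j\}]\supseteq V(C_n)$. Since $n\ge 4$, the rim contains two nonadjacent vertices, and every $c_l$ is a common neighbor of them. Hence all of $V$ lies in the hull. Similarly, if $v_k$ and $v_\ell$ are nonadjacent, then every $c_i$ is a common neighbor, so the hull contains $c_1$ and $c_2$. By the previous case it is then all of $V$; this is where $m\ge2$ is used. We then check that no single vertex generates, since singletons are convex, so these two-element generating sets are minimal.

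It remains to show that every generating set $P$ contains one of these pairs; this is the main obstacle. Suppose $P$ contains at most one hub vertex and its rim vertices are pairwise adjacent. Because $n\ge4$, pairwise adjacent rim vertices number at most $2$ and form an edge of $C_n$. We claim that $P$ itself, or a small explicit superset avoiding the other hub vertices, is convex and proper. Concretely, the set $K=\{c_i\}\cup V(C_n)$ is convex: the only pairs at distance $2$ inside $K$ are nonadjacent rim pairs, and their common neighbors are hub vertices or rim vertices, but geodesics through other hub vertices $c_j$ would leave $K$. This shows that $K$ itself is \emph{not} convex, so the claim must be made for smaller sets. Instead we verify directly that $P$ is convex in the remaining cases. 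If $P\subseteq\{c_i,v_k,v_{k+1}\}$, then all pairs in $P$ are adjacent, so $P$ is convex and hence $P\ne V$. This covers the case of at most one hub vertex. Otherwise $P$ has at least two hub vertices or two nonadjacent rim vertices, and so it contains a listed pair. Therefore $\mathcal{G}$ is exactly the listed family, and the result follows from $\mathcal{N}^\star=\complement(\mathcal{G})$.
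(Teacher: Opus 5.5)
Your argument is correct. It ends the same way as the paper's: identify $\mathcal{G}$, then use $\mathcal{N}^\star=\complement(\mathcal{G})$. Where you differ is in how you obtain $\mathcal{G}$.

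The paper imports the maximal nongenerating sets from \cite[Proposition~7.27]{BEMSS2024}, namely the cliques $\{c_i,v_j,v_{j+1}\}$. It then passes to $\mathcal{G}^\star=\complement(\mathcal{N})$ and leaves $\mathcal{G}=\Tr(\mathcal{G}^\star)$ as ``easy to verify.''

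You work directly from distances instead:
\begin{itemize}
\item the listed pairs generate, using $m\ge 2$ and the existence of a nonadjacent rim pair when $n\ge 4$;
\item they are minimal, because singletons are convex;
\item any set containing none of them lies in a clique $\{c_i,v_k,v_{k+1}\}$, since $C_n$ is triangle-free for $n\ge 4$, and such a clique is convex and proper.
\end{itemize}
That last step is in effect a proof of the cited description of $\mathcal{N}$. The ``minimal sets not contained in any member of $\mathcal{N}$'' you end up with are exactly what the transversal computation produces. So your version is a self-contained unpacking of the paper's two-line proof. It is longer, but it fills in the ``easy to verify'' step and shows where $m\ge 2$ and $n\ge 4$ are used, which the paper leaves inside the citation.

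One editorial point: delete the false start claiming $K=\{c_i\}\cup V(C_n)$ is convex, which you assert and then retract in the next sentence. The argument never uses it, and leaving it in makes the key paragraph hard to follow.
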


\begin{proof}
We know from \cite[Proposition 7.27]{BEMSS2024} that 
\[
\mathcal{G}^*=\complement(\mathcal{N})=\{\{c_i,v_j,v_{j+1}\}^c\mid i \in \{1,\ldots,m\}, j \in \{1,\ldots,n\}\}. 
\]
It is easy  to verify that
\[ 
\mathcal{G}=\Tr(\mathcal{G}^*))=
\{ \{c_i, c_j\} \mid i \ne j \} \cup
\{ \{v_k, v_\ell\} \mid  \text{$k\not=\ell$ and $v_k$, $v_\ell$ nonadjacent}\},
\]
which proves the claim about $\mathcal{N}^*=\complement(\mathcal{G})$.
\end{proof}

\begin{prop}
For generalized wheel graphs with $n\ge 4$, $\nim(\DNT(W_{m,n}))=\pty(V)$.
\end{prop}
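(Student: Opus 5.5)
The plan is to apply Proposition~\ref{prop:all-same} directly, using the description of $\mathcal{N}^*$ from Proposition~\ref{prop:max-genwheel}. Since the terminal positions of $\DNT(W_{m,n})$ are exactly the elements of $\mathcal{N}^*$, it suffices to show that all these sets have the same parity. We then check that this common parity equals $\pty(V)$.

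By Proposition~\ref{prop:max-genwheel}, with $n\ge 4$, every element of $\mathcal{N}^*$ has one of two forms. It is either $\{c_i,c_j\}^c$ with $i\ne j$, or $\{v_k,v_\ell\}^c$ with $v_k,v_\ell$ distinct and nonadjacent. In both cases the removed set has exactly two elements, so every $N\in\mathcal{N}^*$ satisfies $|N|=|V|-2$. Hence $\pty(N)=\pty(|V|-2)=\pty(V)$ for all $N\in\mathcal{N}^*$.

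We also need $\mathcal{N}^*$ to be nonempty, so that the game actually has terminal positions of this parity. This is immediate. Since $m\ge 2$, the set $\{c_1,c_2\}^c$ belongs to $\mathcal{N}^*$. Alternatively, since $n\ge 4$, the vertices $v_1$ and $v_3$ are nonadjacent.

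Proposition~\ref{prop:all-same} then gives $\nim(\DNT(W_{m,n}))=\pty(V)$. There is no real obstacle here. The substantive work, identifying $\mathcal{N}^*$, was already carried out in Proposition~\ref{prop:max-genwheel}, and what remains is the observation that every maximal nonterminating set has size $|V|-2$.
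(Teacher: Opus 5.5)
Your proposal is correct and is the same argument as the paper's proof, which also deduces the result directly from Proposition~\ref{prop:max-genwheel} and Proposition~\ref{prop:all-same}: every maximal nonterminating set is the complement of a two-element set, so it has parity $\pty(|V|-2)=\pty(V)$. Your nonemptiness check does no harm, but it is automatic, since $\emptyset$ is always nonterminating and is therefore contained in some element of $\mathcal{N}^*$.
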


\begin{proof}
The result follows from Propositions~\ref{prop:max-genwheel} and \ref{prop:all-same}.
\end{proof}

We now focus on $\TER(W_{m,n})$ for the rest of the section. We will map this game to another simpler game. To describe this map, we need to introduce some terminology. 

A  \emph{partition} $\lambda$ of a nonnegative integer $n$ is a multiset of nonnegative integers whose sum is $n$. It is customary to write the elements of a partition as a nonincreasing list $\lambda=[l_1,\ldots,l_k]$. We will sometimes use the notation $\lambda^n$ if we want to emphasize that we have a partition of $n$. We call each $\lambda_i$ a \emph{part} of $\lambda$. We say that we \emph{split} $\lambda$ if we replace a part $\alpha$ with a pair $\beta, \gamma \geq 0$ such that $\beta+\gamma=\alpha-1$. The special split when $\gamma=0$ is called a \emph{decrement}.

It will sometimes be useful to write $\lambda^n_o$ for a partition of $n$ with exactly $o$ odd parts.
Replacing an even part with an odd and an even part increases the number of odd parts by $1$. Replacing an odd part with two even parts decreases the number of odd parts by one, whereas replacing an odd part with two odd parts increases the number of odd parts by one. Hence a split of $\lambda^n_0$ always results in $\lambda^{n-1}_1$. A split of $\lambda^n_1$ results in either $\lambda^{n-1}_0$ or $\lambda^{n-1}_2$, with the former always being available.  

Once a rim vertex has been selected in a position $P$ of $\TER(W_{m,n})$, we define $f(P):=(c,\lambda)$, where $c$ is the number of unselected central vertices remaining and $\lambda$ is the partition that describes the sizes of the consecutive clusters of unselected rim vertices. If $P$ is a terminal position then $f(P)$ is $(0,[1])$, $(0,[2])$, $(1,[\,])$, $(1,[1])$, or $(1,[2])$. If a rim vertex has not yet been selected, we will use the notation $\Lambda^n$ to denote all of the original rim vertices, and we define $f(P_0):=(m,\Lambda^n)$ for the starting position $P_0$ of $\TER(W_{m,n})$. We consider $\Lambda^n$ to be a special partition whose only split is $[n-1]$. We allow $\lambda^n_{\pty(n)}$ to mean $\Lambda^n$.

Let $\SPL(m,n)$ be the game whose set of positions is $\{f(P)\mid P\in\TER(W_{m,n})\}$. 
There are two possible moves from position $(c,\lambda)$. A \emph{dehub} decreases $c$ by 1. The other option is a split of $\lambda$. Selecting a central vertex in position $P$ decreases the first component of $f(P)$, while selecting a rim vertex creates a split of the second component of $f(P)$. So $f:\TER(W_{m,n})\to\SPL(m,n)$ is an option-preserving map. As a consequence of \cite{Baltushkin,Basic}, $\nim(\TER(W_{m,n}))=\nim(\SPL(m,n))$.

\begin{example}
Figure~\ref{fig:fimage} shows a play in $\TER(W_{2,4})$ and its image in $\SPL(2,4)$ under the option-preserving map $f$.
\end{example}

\begin{figure}[h]
\begin{tabular}{ccccccccc}
\begin{tikzpicture}[scale=1.2,auto,,baseline=.5cm]
\node (1) at (0,0) [small vert] {};
\node (2) at (1,0) [small vert] {};
\node (3) at (1,1) [small vert] {};
\node (4) at (0,1) [small vert] {};
\node (5) at (.5,.7) [small vert] {};
\node (6) at (0.5,.3) [small vert] {};
\path [b] (1) to (2) to (3) to (4) to (1);
\path [b] (1) to (5);
\path [b] (2) to (5);
\path [b] (3) to (5);
\path [b] (4) to (5);
\path [b] (1) to (6);
\path [b] (2) to (6);
\path [b] (3) to (6);
\path [b] (4) to (6);
\begin{pgfonlayer}{background}
\end{pgfonlayer}
\end{tikzpicture}
& $\longrightarrow$ &
\begin{tikzpicture}[scale=1.2,auto,,baseline=.5cm]
\node (1) at (0,0) [small vert] {};
\node (2) at (1,0) [small vert,fill=purple] {};
\node (3) at (1,1) [small vert] {};
\node (4) at (0,1) [small vert] {};
\node (5) at (.5,.7) [small vert] {};
\node (6) at (0.5,.3) [small vert] {};
\path [b] (1) to (2) to (3) to (4) to (1);
\path [b] (1) to (5);
\path [b] (2) to (5);
\path [b] (3) to (5);
\path [b] (4) to (5);
\path [b] (1) to (6);
\path [b] (2) to (6);
\path [b] (3) to (6);
\path [b] (4) to (6);
\begin{pgfonlayer}{background}
\end{pgfonlayer}
\end{tikzpicture}
& $\longrightarrow$ &
\begin{tikzpicture}[scale=1.2,auto,baseline=.5cm]
\node (1) at (0,0) [small vert] {};
\node (2) at (1,0) [small vert,fill=purple] {};
\node (3) at (1,1) [small vert] {};
\node (4) at (0,1) [small vert,fill=purple] {};
\node (5) at (.5,.7) [small vert] {};
\node (6) at (0.5,.3) [small vert] {};
\path [b] (1) to (2) to (3) to (4) to (1);
\path [b] (1) to (5);
\path [b] (2) to (5);
\path [b] (3) to (5);
\path [b] (4) to (5);
\path [b] (1) to (6);
\path [b] (2) to (6);
\path [b] (3) to (6);
\path [b] (4) to (6);
\begin{pgfonlayer}{background}
\end{pgfonlayer}
\end{tikzpicture}
& $\longrightarrow$ &
\begin{tikzpicture}[scale=1.2,auto,baseline=.5cm]
\node (1) at (0,0) [small vert] {};
\node (2) at (1,0) [small vert,fill=purple] {};
\node (3) at (1,1) [small vert] {};
\node (4) at (0,1) [small vert,fill=purple] {};
\node (5) at (.5,.7) [small vert,fill=purple] {};
\node (6) at (0.5,.3) [small vert] {};
\path [b] (1) to (2) to (3) to (4) to (1);
\path [b] (1) to (5);
\path [b] (2) to (5);
\path [b] (3) to (5);
\path [b] (4) to (5);
\path [b] (1) to (6);
\path [b] (2) to (6);
\path [b] (3) to (6);
\path [b] (4) to (6);
\begin{pgfonlayer}{background}
\end{pgfonlayer}
\end{tikzpicture}
& $\longrightarrow$ &
\begin{tikzpicture}[scale=1.2,auto,baseline=.5cm]
\node (1) at (0,0) [small vert,fill=purple] {};
\node (2) at (1,0) [small vert,fill=purple] {};
\node (3) at (1,1) [small vert] {};
\node (4) at (0,1) [small vert,fill=purple] {};
\node (5) at (.5,.7) [small vert,fill=purple] {};
\node (6) at (0.5,.3) [small vert] {};
\path [b] (1) to (2) to (3) to (4) to (1);
\path [b] (1) to (5);
\path [b] (2) to (5);
\path [b] (3) to (5);
\path [b] (4) to (5);
\path [b] (1) to (6);
\path [b] (2) to (6);
\path [b] (3) to (6);
\path [b] (4) to (6);
\end{tikzpicture}\\
\vspace{-3mm}
 & & & & & & & & \\
$(2,\Lambda^4)$ & $\longrightarrow$ & $(2,[3])$ & $\longrightarrow$ & $(2,[1,1])$ & $\longrightarrow$ & $(1,[1,1])$ & $\longrightarrow$ & $(1,[1])$
\end{tabular}

\caption{
\label{fig:fimage}
A play in $\TER(W_{2,4})$ and its image in $\SPL(2,4)$.
}
\end{figure}

\begin{rem}
As in~\cite{impartialremovinggamesgrid}, we are going to describe complicated winning strategies for the second player using \emph{case analysis diagrams}. A case analysis diagram is a digraph whose vertices are either single positions or sets of positions. Sets of positions are usually described using some parameters and conditions on these parameters. Single headed arrows represent the possible moves for the first player, while double headed arrows represent the winning replies for the second player. The diagram may contain cycles. The second player breaks out of these cycles along moves represented by dashed arrows when these moves become available. This is guaranteed since the game does not have infinite plays. A sink vertex is either a terminal position of the game or a nonterminal position that has already been proved to be a losing position. The latter is indicated by {\color{cyan}$*0$}  or a reference to another case analysis diagram. These sink vertices only have double headed incoming arrows. The starting position of the strategy is usually on the top of the diagram but a diagram can have several starting positions indicated by {\color{red}$\cc{i}$} symbols for some number $i$. Notation such as (X.$i$) indicates that the diagram continues at starting point $\cc{i}$ of Diagram~(X). Occasionally, a $\bullet$ indicates a position after the first player's move for which a description is not necessary.
\end{rem}

We demonstrate the use of case analysis diagrams with a familiar game.

\begin{example}
Figure~\ref{fig:nplusn} shows a case analysis diagram for a strategy of the second player to win $*n+*n$. Play starts at \cc{1} if $n>1$ and at \cc{2} if $n=0$. A dashed arrow becomes available for the second player when the first player moves to a position that makes $a=0$.
\end{example}

\begin{figure}[h]

\begin{tikzpicture}[xscale=3,yscale=.45]
\node (0) at (0,2) {$*b+*b$};
\node at (0.155) {$\cc{1}$};
\node (l) at (-1,1) {$*a+*b$};
\node (r) at (1,1) {$*b+*a$};
\node (b) at (0,0) {$*0+*0$};
\node at (b.155) {$\cc{2}$};
\draw[->] (0) to[bend right=15] (l);
\draw[->>] (l) to[bend right=15] (0);
\draw[->] (0) to[bend left=15] (r);
\draw[->>] (r) to[bend left=15] (0);
\draw[->>,dashed] (l) to (b);
\draw[->>,dashed] (r) to (b);
\end{tikzpicture}

\caption{
\label{fig:nplusn}
Case analysis diagram for a strategy with $0\le a<b$. 
}
\end{figure}

\begin{lemma}\label{lem:SPL33}
Position $(3,[3])$ of $\SPL$ has nim-value $1$.
\end{lemma}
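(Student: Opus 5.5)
The plan is a direct computation of $\nim(3,[3])=\mex(\nim(\Opt(3,[3])))$ by backward induction over the small positions of $\SPL$ reachable from $(3,[3])$. First I would pin down the terminal positions. In $\TER(W_{m,n})$ the play stops exactly when the unselected vertices contain no minimal generating set. By Proposition~\ref{prop:max-genwheel}, the minimal generating sets are the pairs of central vertices and the pairs of nonadjacent rim vertices. Hence $(c,\lambda)$ is terminal precisely when $c\le 1$ and the rim clusters contain no two nonadjacent vertices. Ignoring zero parts, this means $\lambda\in\{[\,],[1],[2]\}$, which matches the list of terminal positions given before the statement. In particular, no position with $c\ge 2$ is terminal.

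The options of $(3,[3])$ are the dehub $(2,[3])$, the decrement $(3,[2])$, and the split $(3,[1,1])$. I would evaluate the positions below these bottom-up, each value being a one-line $\mex$:
\begin{itemize}
\item $\nim(2,[\,])=1$ and $\nim(3,[\,])=0$, since $(1,[\,])$ is terminal.
\item $\nim(2,[1])=\mex\{0,1\}=2$, using the options $(1,[1])$ and $(2,[\,])$; then $\nim(3,[1])=\mex\{2,0\}=1$.
\item $\nim(0,[1,1])=1$, then $\nim(1,[1,1])=\mex\{1,0\}=2$, and $\nim(2,[1,1])=\mex\{2,2\}=0$.
\item $\nim(3,[1,1])=\mex\{0,1\}=2$, from the options $(2,[1,1])$ and $(3,[1])$.
\item $\nim(2,[2])=\mex\{0,2\}=1$, from the options $(1,[2])$ and $(2,[1])$; then $\nim(3,[2])=\mex\{1,1\}=0$.
\end{itemize}

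The only potentially laborious option is $(2,[3])$, whose own options include $(1,[3])$, and that position leads into a longer computation. I would avoid it entirely. It suffices to note that $(2,[3])$ has the options $(2,[2])$, of value $1$, and $(2,[1,1])$, of value $0$. Therefore $\nim(2,[3])\ge 2$ regardless of the value of $(1,[3])$.

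It follows that $\nim(\Opt(3,[3]))$ contains $0$ (from $(3,[2])$) and $2$ (from $(3,[1,1])$). Its remaining element is $\nim(2,[3])\ge 2$. So the set contains $0$ but not $1$, and the mex is $1$. The main obstacle is only bookkeeping: I must check the terminal conditions correctly, in particular that $(1,[1,1])$ and $(0,[1,1])$ are not terminal because two singleton clusters are nonadjacent. I must also be careful that zero parts created by splits are discarded.
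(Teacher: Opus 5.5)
Your proof is correct and is essentially the paper's argument: the paper checks the same small subtree by giving a second-player winning strategy for $(3,[3])+*1$ in case analysis diagram (A), whereas you tabulate the mex values directly. Both avoid evaluating $(1,[3])$ in the same way, since the diagram's reply $(2,[3])+*1\to(2,[2])+*1$ is exactly your observation that $(2,[3])$ has an option of value $1$.
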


\begin{proof}
The case analysis diagram in Figure~\ref{fig:Case3nnEndgame3} shows a winning strategy for the second player starting at $\cc{1}$.
\end{proof}

\begin{figure}[h]
\begin{tikzpicture}[yscale=0.45,xscale=0.83]
\node (01) at (2,6) {$(3,[3])+*1$};
\node at (01.170) {$\cc{1}$};
\node (11) at (-6,4) {$(2,[3])+*1$};
\node (12) at (10,4) {$(3,[1,1])+*1$};
\node (13) at (4,4) {$(3,[2])+*1$};
\node (14) at (0,4) {$(3,[3])$};
\draw[->] (01) -- (11);
\draw[->] (01) -- (12);
\draw[->] (01) -- (13);
\draw[->] (01) -- (14);
\node (21) at (-4,2) {$(2,[2])+*1$};
\node (22) at (8,2) {$(3,[1])+*1$};
\node (24) at (2,2) {$(3,[2])$};
\node at (24.170) {$\cc{2}$};
\draw[->>] (11) -- (21);
\draw[->>] (12) -- (22);
\draw[->>] (13) -- (24);
\draw[->>] (14) -- (24);
\node (31) at (-6,0) {$(1,[2])+*1$};
\node (32) at (2,0) {$(2,[1])+*1$};
\node (33) at (-2,0) {$(2,[2])$};
\node (34) at (10,0) {$(3,[\,])+*1$};
\node (35) at (6,0) {$(3,[1])$};
\draw[->] (21) -- (31);
\draw[->] (21) -- (32);
\draw[->] (21) -- (33);
\draw[->] (22) -- (32);
\draw[->] (22) -- (34);
\draw[->] (22) -- (35);
\draw[->] (24) -- (33);
\draw[->] (24) -- (35);
\node (41) at (-4,-2) {$(1,[2])$};
\node (42) at (4,-2) {$(2,[\,])+*1$};
\node (43) at (8,-2) {$(3,[\,])$};
\node at (43.170) {$\cc{3}$};
\draw[->>] (31) -- (41);
\draw[->>] (32) -- (42);
\draw[->>] (33) -- (41);
\draw[->>] (34) -- (43);
\draw[->>] (35) -- (43);
\node (51) at (1,-4) {$(1,[\,])+*1$};
\node (52) at (6,-4) {$(2,[\,])$};
\draw[->] (42) -- (51);
\draw[->] (42) -- (52);
\draw[->] (43) -- (52);
\node (63) at (4,-6) {$(1,[\,])$};
\draw[->>] (51) -- (63);
\draw[->>] (52) -- (63);
\end{tikzpicture}

\caption{
\label{fig:Case3nnEndgame3}
Case analysis diagram (A) for one possible endgame. The alternate starting positions $\cc{2}$ and $\cc{3}$ will be needed later in the paper.
}
\end{figure}

\begin{lemma}
\label{lemma:AllOnes}
A position $(c,[1^c])$ of $\SPL$ with $c\ge 1$ has nim-value $0$.
\end{lemma}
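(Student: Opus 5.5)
The plan is to prove the statement by induction on $c$, using the mirror strategy: whenever the first player moves in one component of the pair $(c,\lambda)$, the second player answers in the other. The only moves available from $(c,[1^c])$ are a dehub and a split of a part equal to $1$. Such a split necessarily replaces the $1$ by $0,0$, so it simply deletes that part; parts equal to $0$ correspond to no unselected rim vertices and can be ignored. A position $(c,[1^c])$ therefore has at most two option types:
\[
(c-1,[1^c]) \quad\text{and}\quad (c,[1^{c-1}]).
\]
It suffices to show that each of these has an option of nim-value $0$.

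\emph{Base case} $c=1$. The position $(1,[1])$ is one of the terminal positions listed in the definition of $f$ (namely $(0,[1])$, $(0,[2])$, $(1,[\,])$, $(1,[1])$, $(1,[2])$). Hence it has nim-value $0$.

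\emph{Inductive step} $c\ge 2$. Assume $\nim(c-1,[1^{c-1}])=0$.
\begin{enumerate}
\item If the first player dehubs to $(c-1,[1^c])$, the second player deletes one part of size $1$ and reaches $(c-1,[1^{c-1}])$.
\item If the first player deletes a part, reaching $(c,[1^{c-1}])$, the second player dehubs and again reaches $(c-1,[1^{c-1}])$.
\end{enumerate}
In both cases the reply lands on a position of nim-value $0$ by the induction hypothesis. Therefore both options of $(c,[1^c])$ have nonzero nim-value, and $\nim(c,[1^c])=0$.

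The only point needing care is legality: none of the intermediate positions may be terminal, since otherwise the game would already have ended and the reply would not be available. I would check this against the list of terminal positions of $\SPL$, which all have $c\le 1$ and at most one nonzero part.
\begin{enumerate}
\item For $c\ge 2$, the position $(c,[1^c])$ has $c\ge 2$, so it is not terminal, and both of its moves exist.
\item The position $(c-1,[1^c])$ has at least two nonzero parts, so it is not terminal.
\item The position $(c,[1^{c-1}])$ has $c\ge 2$, so it is not terminal.
\end{enumerate}
Hence both replies are legal moves. They are also available in $\TER(W_{m,n})$: two nonadjacent unselected rim vertices, or two unselected central vertices, still generate by Proposition~\ref{prop:max-genwheel}. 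This agrees with the listed terminal positions.

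I expect no real obstacle beyond this bookkeeping of terminal positions. The argument could also be phrased as the statement that $(c,[1^c])$ behaves like $*c+*c$ restricted by the terminal condition, with the strategy following the pattern of Figure~\ref{fig:nplusn}.
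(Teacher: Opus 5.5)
Your proposal is correct and is the paper's argument: the paper's proof is exactly this mirror strategy, splitting when the opponent dehubs and dehubbing when the opponent splits. You make it rigorous with an induction on $c$ and an explicit check that the intermediate positions $(c-1,[1^c])$ and $(c,[1^{c-1}])$ are not terminal for $c\ge 2$; the paper leaves both of these implicit.
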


\begin{proof}
The second player can win by splitting whenever the first player decreases and decreasing whenever the first player splits.
\end{proof}

\begin{lemma}
\label{lemma:GeneralizedWheel2n}
A position $(c,\lambda^{2l}_0)$ of $\SPL$ with $c\in\{0,2\}$ and $l\geq 2$ has nim-value $0$.
\end{lemma}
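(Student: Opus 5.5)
The plan is to give an explicit winning strategy for the second player, phrased as an invariant. Call a position \emph{good} if it has the form $(c,\lambda^{2k}_0)$ with $c\in\{0,2\}$ and $k\ge 2$. Here parts equal to $0$ are ignored, and $\Lambda^{2k}$ counts as such a partition. I will show two things. First, from a good position the first player can never move to a terminal position. Second, every move of the first player can be answered either by a move back to a good position of smaller total size, or by a move to a position already known to have nim-value $0$ (a terminal position, or one covered by Lemma~\ref{lemma:AllOnes}). Since the game is finite, this shows that good positions are $P$-positions.

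\emph{No terminal move for the first player.} The terminal positions of $\SPL$ are $(0,[1])$, $(0,[2])$, $(1,[\,])$, $(1,[1])$ and $(1,[2])$. From a good position, a dehub is only possible when $c=2$, and it gives $(1,\lambda^{2k})$ with $2k\ge 4$, which is not terminal. A split keeps $c$ and leaves rim total $2k-1\ge 3$. The result is again not terminal: when $c=2$ no position is terminal at all, and when $c=0$ termination requires rim total at most $2$.

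\emph{The second player's replies.}
\begin{itemize}
\item If the first player dehubs from $(2,\lambda^{2k}_0)$, the second player dehubs again. This reaches $(0,\lambda^{2k}_0)$, which is good.
\item If the first player splits, then by the parity remark before the lemma the result is $(c,\lambda^{2k-1}_1)$, with exactly one odd part $\alpha$. (From $\Lambda^{2k}$ the only split gives $[2k-1]$, which fits this pattern.) The second player decrements $\alpha$ to $\alpha-1$, which is even. This gives $(c,\lambda^{2k-2}_0)$.
\end{itemize}
If $2k-2\ge 4$, the resulting position is good again. If $2k-2=2$ and $c=0$, the reply produces $(0,[2])$, which is terminal, so the second player has made the last move and wins.

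\emph{The boundary case $k=2$, $c=2$.} This is the one place where the plain invariant breaks down, since $(2,[2])$ is not terminal. So this is the step I expect to need separate care. Here the first player has split $(2,\lambda^4_0)$ into one of $(2,[3])$ or $(2,[2,1])$. Instead of decrementing, the second player moves to $(2,[1,1])$:
\begin{itemize}
\item from $(2,[3])$ by splitting $3$ into $1+1$;
\item from $(2,[2,1])$ by decrementing the part $2$.
\end{itemize}
Lemma~\ref{lemma:AllOnes} with $c=2$ gives $\nim(2,[1,1])=0$, so this reply wins. A dehub from $(2,\lambda^4_0)$ is handled by the general rule above: it leads to the good position $(0,\lambda^4_0)$.

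It remains to check that $(0,\lambda^4_0)$ is covered. Its splits lead to rim total $3$ with $c=0$, and the decrement reply then gives $(0,[2])$, which is terminal. Putting the cases together, every first-player move from a good position has a reply that is good or is a known $P$-position, so every good position has nim-value $0$.
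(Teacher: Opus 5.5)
Your proposal is correct and is essentially the paper's own strategy, namely case analysis diagram (B): answer a dehub with a dehub, answer a split by restoring an all-even partition (your decrement of the unique odd part), and in the $(2,\lambda^4_0)$ endgame answer a split by moving to $(2,[1,1])$. The only difference is cosmetic: you cite Lemma~\ref{lemma:AllOnes} for $(2,[1,1])$, whereas the diagram draws that short endgame out explicitly.
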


\begin{proof}
A winning strategy for the second player is described in Figure~\ref{fig:EvenLemma}.  For $l=2$, the strategy starts at $(0,[4])$, $(0,[2,2])$, $(2,[4])$, or $(2,[2,2])$.  For $l \geq 3$, the starting position is at \cc{1} or \cc{2}. Note that the possible partitions described as $\lambda_1^5$ are $[5]$, $[4,1]$, $[3,2]$, and $[2,2,1]$. From each of these either $[4]$ or $[2,2]$ is available after a split, indicated by a dashed arrow in the diagram. 
\end{proof}

\begin{figure}[h]
\begin{tikzpicture}[yscale=0.49,xscale=0.76]
\node (0) at (0,2.5) {${(2,\lambda^{2k}_0)}$};
\node[draw,circle,red,inner sep=.7] at (0.170) {$\scriptscriptstyle 1$};
\node (a1) at (-4,0) {$(1,\lambda^{2k}_0)$};
\node (a2) at (6,2) {$(2,\lambda^{2k-1}_1)$};
\draw[->,blue] (0) -- (a1);
\draw[->,orange] (0) to[bend left=10] (a2);
\node (b1) at (-7,-2) {$(0,\lambda^{2k}_0)$};
\node[draw,circle,red,inner sep=.7] at (b1.170) {$\scriptscriptstyle 2$};
\draw[->>,blue] (a1) -- (b1);
\draw[->>,orange] (a2) to[bend left=10] (0);
\node (c1) at (-3,-2.5) {$(0,\lambda^{2k-1}_1)$};
\node (c2) at (3,0) {$(2,4)$};
\node[draw,circle,red,inner sep=.7] at (c2.170) {$\scriptscriptstyle 3$};
\node (c3) at (9.5,0) {$(2,[2,2])$};
\node[draw,circle,red,inner sep=.7] at (c3.170) {$\scriptscriptstyle 4$};
\draw[->,orange] (b1) to[bend left=10] (c1);
\draw[dashed,->>,orange] (a2) -- (c2);
\draw[dashed,->>,orange] (a2) -- (c3);
\node (d2) at (1,-2) {$(1,4)$};
\node (d3) at (4,-2) {$(2,[3])$};
\node (d4) at (7,-2) {$(2,[2,1])$};
\node (d5) at (11,-2) {$(1,[2,2])$};
\draw[->>,orange] (c1) to[bend left=10] (b1);
\draw[->,blue] (c2) -- (d2);
\draw[->,orange] (c2) -- (d3);
\draw[->,orange] (c2) -- (d4);
\draw[->,orange] (c3) -- (d4);
\draw[->,blue] (c3) -- (d5);
\node (e1) at (-5,-5) {$(0,4)$};
\node[draw,circle,red,inner sep=.7] at (e1.170) {$\scriptscriptstyle 5$};
\node (e2) at (-1,-5) {$(0,[2,2])$};
\node[draw,circle,red,inner sep=.7] at (e2.170) {$\scriptscriptstyle 6$};
\node (e3) at (5.5,-5) {$(2,[1,1])$};
\draw[dashed,->>,orange] (c1) -- (e1);
\draw[dashed,->>,orange] (c1) -- (e2);
\draw[->>,blue] (d2) -- (e1);
\draw[->>,blue] (d5) -- (e2);
\draw[->>,orange] (d3) -- (e3);
\draw[->>,orange] (d4) -- (e3);
\node (f1) at (-5,-7) {$(0,[3])$};
\node (f2) at (-1,-7) {$(0,[2,1])$};
\node (f3) at (3.5,-7) {$(1,[1,1])$};
\node (f4) at (7.5,-7) {$(2,[1])$};
\draw[->,orange] (e1) -- (f1);
\draw[->,orange] (e1) -- (f2);
\draw[->,orange] (e2) -- (f2);
\draw[->,blue] (e3) -- (f3);
\draw[->,orange] (e3) -- (f4);
\node (g1) at (-3,-9) {$(0,[2])$};
\node (g2) at (5.5,-9) {$(1,[1])$};
\draw[->>,orange] (f1) -- (g1);
\draw[->>,orange] (f2) -- (g1);
\draw[->>,orange] (f3) -- (g2);
\draw[->>,blue] (f4) -- (g2);
\end{tikzpicture}

\caption{
\label{fig:EvenLemma}
Case analysis diagram (B) for a strategy  with $k\ge 3$.  Here, $(i,4)$ stands for either $(i,\Lambda^4)$ or $(i,[4])$ with $i \in \{0,1,2\}$. Arrow colors distinguish between dehub and split moves.
}
\end{figure}

We say that a partition $\lambda:=[l_1,l_2,\ldots,l_k]$ is \emph{solid} if $l_1 \geq 4$ or $l_2 \geq 2$. We will write $\bm{\lambda}$ to indicate a solid partition. If $\lambda$ is not solid, then $\lambda$  must have the form $[1^a]$, $[2,1^a]$, or $[3,1^a]$ for some $a \geq 0$. It is impossible to split a solid partition into a partition of the form $[1^a]$. We will also consider $\Lambda^n$ to be solid for all $n \geq 5$.

\begin{lemma}
\label{lemma:SolidPartitions}
A solid partition $\bm{\lambda}$ different from $[4]$ and $[2,2]$ can be split into a solid partition.
\end{lemma}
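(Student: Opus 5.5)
This is a direct case analysis on the shape of $\bm{\lambda}=[l_1,l_2,\ldots,l_k]$ with $l_1\ge l_2\ge\cdots$. Recall that a split replaces one part $\alpha$ by $\beta,\gamma\ge0$ with $\beta+\gamma=\alpha-1$; parts equal to $0$ may be discarded. The special partition $\Lambda^n$ with $n\ge5$ splits only to $[n-1]$. Since $n-1\ge4$, the partition $[n-1]$ is solid, so $\Lambda^n$ is settled at once. For ordinary partitions, solidity means $l_1\ge4$ or $l_2\ge2$. The plan is always to split a part in a way that keeps one of these two witnesses intact.

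\emph{Case 1: $l_2\ge2$ and $k\ge3$.} Then $l_3\ge1$, and we decrement $l_3$ (replace it by $l_3-1$). The parts $l_1\ge l_2\ge2$ survive, so the result still has two parts that are at least $2$ and is solid.

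\emph{Case 2: $k=2$ and $l_2\ge2$.} If $l_1\ge3$, decrement $l_1$. This leaves parts $l_1-1\ge2$ and $l_2\ge2$, which is solid. Otherwise $l_1=l_2=2$, so $\bm{\lambda}=[2,2]$, which is excluded.

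\emph{Case 3: $l_2\le1$, so $l_1\ge4$.}
\begin{itemize}
\item If $l_1\ge5$, decrement $l_1$ to $l_1-1\ge4$; the result is solid.
\item If $l_1=4$ and $k\ge2$, so that $l_2=1$, decrement $l_2$ to $0$. This leaves $[4,1^{k-2}]$, which is solid.
\item If $l_1=4$ and $k=1$, then $\bm{\lambda}=[4]$, which is excluded.
\end{itemize}
(Alternatively, splitting $4$ into $2,1$ gives $[2,1,\ldots]$, which is not solid. This is exactly why $[4]$ had to be excluded.)

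The only delicate point is making sure the cases are exhaustive and that the two exclusions are exactly the dead ends. Indeed, $[4]$ splits only to $[3]$, $[2,1]$, or $[1,1]$ (ignoring zeros), and $[2,2]$ splits only to $[2,1]$ or $[2,1,1]$; none of these is solid. The cases above cover every solid partition: either $l_2\ge2$, handled by the number of parts $k$, or $l_1\ge4$ with $l_2\le1$. So the argument is routine, and the main care needed is in tracking which part is decremented so that no case silently falls back to one of the two excluded forms.
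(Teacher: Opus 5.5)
Your proof is correct and is essentially the paper's argument. Both handle $\Lambda^n$ the same way and then run an exhaustive case analysis on the shape of $\bm{\lambda}$, decrementing a part so that a witness of solidity ($l_1\ge 4$ or $l_2\ge 2$) survives; the paper just splits the cases by the value of $l_1$ ($l_1>4$, $l_1=4$, $l_1=3$, $l_1=2$) instead of by $l_2$ and the number of parts. One small slip in your closing remark does not affect the proof: a split of $\alpha$ gives parts summing to $\alpha-1$, so $[4]$ splits only to $[3]$ or $[2,1]$, and $[2,2]$ only to $[2,1]$, meaning $[1,1]$ and $[2,1,1]$ are not splits.
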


\begin{proof}
If $\bm{\lambda}=\Lambda^n$, then $n\ge 5$ and so the only split $[n-1]$ is solid.
Thus, we may assume that $\bm{\lambda}=[l_1,l_2,\ldots,l_k]$. If $l_1>4$, then we can decrement $l_1$. If $l_1=4$, then $l_2\in\{1,2,3,4\}$ since $\lambda \not= [4]$, so we can decrement $l_2$. If $l_1=3$, then $l_2\in\{2,3\}$, so we can decrement $l_1$. Finally, if $l_1=2$, then $l_2=2$ and $l_3 >0$ since $\lambda \not=[2,2]$, so we can decrement $l_3$.
\end{proof}

\begin{lemma}
\label{lem:solidIfBig}
A partition $\lambda_o^c$ with $o\le 2$ and $c\ge 5$ is solid.
\end{lemma}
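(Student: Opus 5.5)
We argue by contraposition: assume $\lambda$ is a partition of $c\ge 5$ that is not solid, and show it has at least three odd parts. (If $\lambda=\Lambda^c$, then it is solid by convention since $c\ge5$, so we may assume $\lambda$ is an ordinary partition $[l_1,\ldots,l_k]$.) By the remark following the definition of solidity, a nonsolid partition has one of the forms $[1^a]$, $[2,1^a]$, or $[3,1^a]$ with $a\ge 0$. Indeed, nonsolid means $l_1\le 3$ and $l_2\le 1$, so every part after the first is $1$ (zero parts can be discarded or ignored, as they affect neither parity count nor sum).

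Next we count odd parts in each form using $c\ge 5$. In $[1^a]$ we have $a=c\ge5$, so there are at least $5$ odd parts. In $[2,1^a]$ we have $a=c-2\ge 3$, so there are at least $3$ odd parts. In $[3,1^a]$ we have $a=c-3\ge2$, and together with the part $3$ this gives at least $3$ odd parts. In every case $o\ge 3$, contradicting $o\le 2$. Hence $\lambda$ is solid.

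The only point needing care is the bookkeeping convention on zero parts (partitions here are multisets of nonnegative integers) and the special symbol $\Lambda^c$; both are harmless. Zero parts do not change $o$ or $c$ and do not affect solidity, because solidity only concerns $l_1$ and $l_2$ in nonincreasing order, and zeros sort to the end. $\Lambda^c$ is declared solid for $c\ge5$. There is no real obstacle.
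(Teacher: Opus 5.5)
Your proposal is correct and follows essentially the same argument as the paper: assume $\lambda^c_o$ is not solid, so it has the form $[1^a]$, $[2,1^a]$, or $[3,1^a]$, and then $c\ge 5$ forces at least three odd parts, contradicting $o\le 2$. Your case-by-case count of $a$ and your explicit handling of $\Lambda^c$ and zero parts simply spell out what the paper's one-line ``$a\ge 2$'' step leaves implicit.
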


\begin{proof}
For a contradiction suppose that $\lambda_o^c$ is not solid. Then $\lambda_o^c$ must be $[1^a]$, $[2,1^a]$, or $[3,1^a]$ for some $a$. Since $c\ge 5$, we must have $a\ge 2$. This implies the contradiction $o\ge 3$.
\end{proof}

\begin{lemma}
\label{lem:solidStrat}
A position $(c,\bm{\lambda}^{c+2})$ of $\SPL$ with $c\ge 3$ has nim-value $0$.
\end{lemma}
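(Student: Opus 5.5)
The plan is a strategy-stealing invariant for the second player. We show that from any position $(c,\bm{\lambda}^{c+2})$ with $c\ge 3$, every move of the first player has a reply that returns to one of three kinds of position. The first kind is a position of the same form with $c$ decreased by one. The second kind is a position already known to be losing, namely $(2,\lambda^4_0)$ from Lemma~\ref{lemma:GeneralizedWheel2n} or $(c',[1^{c'}])$ from Lemma~\ref{lemma:AllOnes}. The third kind is a terminal position. We then induct on $c$, with base case $c=2$, where a solid partition of $4$ is $[4]$, $[2,2]$ or $\Lambda^4$.

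\emph{Dehub by the first player.} The position becomes $(c-1,\bm{\lambda}^{c+2})$. Since $c+2\ge 5$, $\bm{\lambda}$ is neither $[4]$ nor $[2,2]$. By Lemma~\ref{lemma:SolidPartitions} the second player can split it into a solid $\bm{\mu}^{c+1}$, reaching $(c-1,\bm{\mu}^{(c-1)+2})$. If $c-1\ge 3$ we are back in the invariant. If $c-1=2$, then $\bm{\mu}$ is a solid partition of $4$, i.e.\ $[4]$, $[2,2]$ or $\Lambda^4$. All three have no odd parts, so Lemma~\ref{lemma:GeneralizedWheel2n} with $l=2$ gives nim-value $0$.

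\emph{Split by the first player.} The position becomes $(c,\mu^{c+1})$.
\begin{itemize}
\item If $\mu$ is solid, the second player dehubs to $(c-1,\mu^{(c-1)+2})$, which is handled exactly as in the dehub case.
\item If $\mu$ is not solid, it has the form $[1^a]$, $[2,1^a]$ or $[3,1^a]$. The form $[1^a]$ cannot arise from a split of a solid partition.
\item If $\mu=[2,1^{c-1}]$, the second player decrements the $2$, reaching $(c,[1^c])$, which is a $P$-position by Lemma~\ref{lemma:AllOnes}.
\item If $\mu=[3,1^{c-2}]$, the second player splits the $3$ into $1,1$, again reaching $(c,[1^c])$.
\end{itemize}
The sizes work out because the total number of rim vertices is $c+1$ in each case. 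The special partition $\Lambda^n$ with $n\ge 5$ is covered by the convention that it is solid and that its only split is the solid $[n-1]$.

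The main obstacle is the non-solid-split bookkeeping. We need to confirm that every non-solid result of splitting a solid partition of size $c+2$ has one of the two listed shapes with exactly the right number of $1$'s. This follows from the size count. We also need to check that the replies are legal moves in $\SPL$, meaning the position is nonterminal and the required part exists. Finally, the base case relies on the identification of $(2,\Lambda^4)$ with the $l=2$ case of Lemma~\ref{lemma:GeneralizedWheel2n}, which that lemma explicitly allows. With these checks, induction on $c$ (or on the total $c+|\lambda|$) shows that the second player always reaches a $P$-position, so the nim-value is $0$.
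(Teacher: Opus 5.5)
Your proof is correct and follows the paper's case analysis diagram (C) essentially move for move: you answer a dehub with a solid split via Lemma~\ref{lemma:SolidPartitions}, a solid split with a dehub, and the non-solid splits $[2,1^{c-1}]$ and $[3,1^{c-2}]$ with a split to $(c,[1^c])$, and you end at the losing sinks $(2,[4])$, $(2,[2,2])$ and $(c,[1^c])$. Two minor points: $\Lambda^4$ never arises from a split and is not declared solid, so it need not be listed among the solid partitions of $4$; and the argument is an invariant (pairing) strategy for the second player, not strategy stealing.
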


\begin{proof}
A winning strategy for the second player is described in Figure~\ref{fig:GenWheelPhase2}. The dotted arrow is justified by Lemma~\ref{lemma:SolidPartitions}. We also use the fact that it is impossible to split a solid partition to $[1^a]$. Positions $(2,[4])$ and $(2,[2,2])$ are losing by Lemma~\ref{lemma:GeneralizedWheel2n}. Position $(c,[1^c])$ is losing by Lemma~\ref{lemma:AllOnes}. Note that the strategy works even if $\lambda_0^{c+2}=\Lambda^{c+2}$.
\end{proof}

\begin{figure}[h]

\begin{tikzpicture}[xscale=0.7,yscale=.66]
\node (0) at (0,6) {$(c,\bm{\lambda}^{c+2})$};
\node[red] at (0.170) {$\cc{1}$};
\node (a1) at (-6,4) {$(c-1,\bm{\lambda}^{c+2})$};
\node (a2) at (-2,4) {$(c,\bm{\lambda}^{c+1})$};
\node (a3) at (2,4) {$(c,[2,1^{c-1}])$};
\node (a4) at (6,4) {$(c,[3,1^{c-2}])$};
\draw[->,blue] (0.190) -- (a1);
\draw[densely dotted,->>,orange] (a1.12) -- (0);
\draw[->,orange] (0) to[bend right=7] (a2);
\draw[->>,blue] (a2) to[bend right=7] (0);
\draw[->,orange] (0) -- (a3);
\draw[->,orange] (0) -- (a4);
\node (b2) at (6,2) {$\underset{\color{cyan}{*0}}{(c,[1^c])}$};
\draw[->>,orange] (a3) -- (b2);
\draw[->>,orange] (a4) -- (b2);
\node (c1) at (-6,2) {$(2,[4])$};
\node at (c1.-90) {\tiny (B.3)};
\node (c2) at (-2,2) {$(2,[2,2])$};
\node at (c2.-90) {\tiny (B.4)};
\draw[dashed,->>,orange] (a1) -- (c1);
\draw[dashed,->>,orange] (a1) -- (c2);
\draw[dashed,->>,blue] (a2) -- (c1);
\draw[dashed,->>,blue] (a2) -- (c2);
\end{tikzpicture}

\caption{
\label{fig:GenWheelPhase2}
Case analysis diagram (C) for a strategy with $c \geq 3$. 
}
\end{figure}

\begin{lemma}
\label{lem:SmallPartition}
A position $(c+(2k+3),\lambda^{c})$ of $\SPL$ with $c, k\ge 0$ has nim-value $0$. 
\end{lemma}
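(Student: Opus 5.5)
The plan is a direct second-player strategy that maintains an invariant on the quantity $h-\|\lambda\|$. Here $h$ is the number of unselected central vertices and $\|\lambda\|$ is the sum of the parts, that is, the number of unselected rim vertices.

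First I pin down the terminal positions of $\SPL$. A position is terminal exactly when the unselected vertices no longer generate. By Proposition~\ref{prop:max-genwheel}, the minimal generating sets are pairs of hubs and pairs of nonadjacent rim vertices. So a terminal position must have at most one hub left; this matches the list $(0,[1])$, $(0,[2])$, $(1,[\,])$, $(1,[1])$, $(1,[2])$ given when $f$ was defined. The key consequence is that no position with $h\ge 2$ is terminal.

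Every move, dehub or split, lowers $h+\|\lambda\|$ by exactly $1$, so it changes the parity of $h-\|\lambda\|$. The invariant I want to hold after each of the second player's moves (and at the start) is
\[
h-\|\lambda\| \text{ is odd and } h-\|\lambda\|\ge 3 .
\]
At the start $h-\|\lambda\| = (c+2k+3)-c = 2k+3$, so the invariant holds.

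Now suppose the invariant holds and the first player moves.
\begin{itemize}
\item If the first player splits, $\|\lambda\|$ drops by $1$. Then $h-\|\lambda\|$ is even and at least $4$, so $h\ge 4$. The second player dehubs, which is legal since $h\ge 2$. This gives an odd value of $h-\|\lambda\|$ that is at least $3$.
\item If the first player dehubs, $h$ drops by $1$. Then $h-\|\lambda\|$ is even and at least $2$.
\begin{itemize}
\item If $\|\lambda\|>0$, the second player splits any positive part by decrementing it. This includes the special partition $\Lambda^n$, whose only split is $[n-1]$. The difference becomes odd and at least $3$.
\item If $\|\lambda\|=0$, the position is $(h,[\,])$ with $h$ even and $h\ge 2$. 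Only dehubs remain, and the second player dehubs as well. From here the parity of $h$ alone decides the game: the first player always faces odd $h\ge 3$ and the second player always faces even $h$, so the second player is the one who reaches $(1,[\,])$.
\end{itemize}
\end{itemize}

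In every case, after any move by the first player we have $h\ge \|\lambda\|+2\ge 2$, so the first player never moves to a terminal position. The game is finite, so the second player makes the last move. Hence the position is a $P$-position and has nim-value $0$.

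The step I expect to need the most care is the bookkeeping at the boundary. First, I must confirm that the second player's prescribed move is always legal: a dehub needs $h\ge 1$, which holds because $h\ge 2$ after the first player's move, and a split needs $\|\lambda\|>0$. Second, I must check that the special starting partition $\Lambda^c$ behaves like an ordinary partition for this argument. Beyond these checks, no case analysis diagram should be needed, since the parity invariant does all the work.
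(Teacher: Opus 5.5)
Your proof is correct and is essentially the paper's argument. The paper's diagram~(D) maintains exactly your invariant: positions of the form $(c'+2k'+3,\lambda^{c'})$, that is, $h-\|\lambda\|$ odd and at least $3$, which guarantees the first player always leaves at least two hubs and so can never terminate.

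The differences are minor. The paper answers every first-player move with a split whenever one is available, so a split is answered by a split and $k$ increases. It then finishes by citing diagram~(A) for $(3,[\,])$. You instead answer a split with a dehub, and you close out the all-hub endgame with a direct parity count.
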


\begin{proof}
The second player can win by splitting whenever possible until position $(3,[\,])$ is reached, as shown in Figure~\ref{fig.split}. This is a losing position as shown in Figure~\ref{fig:Case3nnEndgame3}.
\end{proof}

\begin{figure}[h]

\begin{tikzpicture}[xscale=3,yscale=.5]
\node (0) at (0,2) {$(c+2k+3,\lambda^c)$};
\node at (0.170) {$\cc{1}$};
\node (l) at (1,2) {$\bullet$};
\node (b) at (0.4,0) {$(3,[\,])$};
\node at (b.150) {$\cc{2}$};
\node at (b.-90) {\tiny (A.3)};
\draw[->] (0) to[bend left=12] (l.150);
\draw[->>] (l.210) to[bend left=12] (0);
\draw[->>,dashed] (l.-100) to (b);
\end{tikzpicture}

\caption{
\label{fig.split}
Case analysis diagram (D) for a strategy with $c,k\ge 0$. 
}
\end{figure}

\begin{thm}
For generalized wheel graphs with $n \geq 4$, 
\[
\nim(\TER(W_{m,n}))=\begin{cases}
0, & n+m \text{ even and } 2\le d\\
0, & n+m \text{ odd and } d \leq 0\\
1, & n+m \text{ even and } d \leq 0\\
1, & n+m \text{ odd and } 2\le d\\
2, & d=1,
\end{cases}
\]
where $d:=n-m$.
\end{thm}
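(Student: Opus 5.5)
The plan is to work entirely in $\SPL(m,n)$. The map $f$ is option-preserving, so $\nim(\TER(W_{m,n}))=\nim(m,\Lambda^n)$. From $(m,\Lambda^n)$ there are exactly two kinds of options: the dehub $(m-1,\Lambda^n)$ and the split $(m,[n-1])$. For a position $(c,\lambda^s)$ write $d=s-c$. A dehub raises $d$ by one, a split lowers it by one, and every move flips the parity of $c+s$. The five cases of the theorem therefore look like a $(p,d)$-signature rule of the same kind as in the complete multipartite case. I would first show that the three ``zero'' cases are losing positions, and then get the other values by mex arguments on the two options.

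\textbf{Zero cases.} First let $n+m$ be even and $d\ge 2$, so $d$ is even. The second player always answers with a split until $d=2$: a dehub followed by a split leaves $d$ unchanged, and two splits lower $d$ by $2$. In the first situation the second player can choose the split so the partition stays solid. This uses Lemma~\ref{lemma:SolidPartitions} and Lemma~\ref{lem:solidIfBig}, since while $s\ge 5$ the number of odd parts stays at most $2$ by the parity remarks on $\lambda^n_o$. Once $d=2$ is reached we are at $(c,\bm{\lambda}^{c+2})$. If $c\ge 3$ this is losing by Lemma~\ref{lem:solidStrat}. If $c\in\{0,2\}$ it is losing by Lemma~\ref{lemma:GeneralizedWheel2n}; $c=1$ cannot occur because of parity.

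Next let $n+m$ be odd and $d\le 0$, so $d$ is odd. For $d\le -3$ we have $m=n+2k+3$, and the position is losing by Lemma~\ref{lem:SmallPartition}. The start is $\Lambda^n$, but the strategy of Figure~\ref{fig.split} only ever splits, so it applies to $\Lambda^n$ as well. The boundary case $d=-1$, i.e.\ $(n+1,\Lambda^n)$, needs its own argument.
\begin{itemize}
\item If the first player splits, then $d=-2$, and the second player splits again to reach $d=-3$, which is covered by Lemma~\ref{lem:SmallPartition}.
\item If the first player dehubs, we reach $(n,\lambda^n)$ with $d=0$. Here I would have the second player steer toward $(c,[1^c])$, which is losing by Lemma~\ref{lemma:AllOnes}, or toward one of the endgame positions of Figure~\ref{fig:Case3nnEndgame3}, such as $(3,[\,])$.
\end{itemize}
I expect to need a small case analysis diagram for this $d\in\{-1,0\}$ boundary, in the style of Diagram~(C). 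I would keep it solid-aware: a solid partition can never be split into $[1^a]$, so the second player must avoid handing over positions like $(c,[2,1^{c-2}])$.

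\textbf{Nonzero cases.} Here I use the two options and the zero cases already established. If $d=1$, then $n+m$ is odd. The dehub has $d=2$ and even sum, so it has value $0$. The split has $d=0$ and even sum, so it should have value $1$; hence the value is $\mex\{0,1\}=2$. In the remaining two cases (even sum with $d\le 0$, and odd sum with $d\ge 2$) the claim is value $1$. One option is a zero position: for odd sum with $d\ge 2$ it is the split, which has $d-1\ge 1$ and even sum. When $d-1=1$ that option has value $2$ and not $0$, so for $d=2$ with odd sum I would instead use the dehub, which has $d=3$ and even sum and is therefore $0$. For even sum with $d\le 0$ the zero option is the dehub, giving $d+1\le 1$ with odd sum. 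When $d=0$ this option has $d=1$ and value $2$, so for $d=0$ I would use the split, which has $d=-1$ and odd sum and is therefore $0$. The remaining step is to check that no option has value $1$. Since each option lies in one of the theorem's own cases, this becomes a simultaneous induction.

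Because of that, I would state and prove the formula for all positions $(c,\lambda)$, not just $(c,\Lambda^n)$, by induction on $c+s$. The exceptions must be listed explicitly: small $c$ with nonsolid $\lambda$, where values like $(3,[3])$ equal $1$ by Lemma~\ref{lem:SPL33}. They appear only near the end of play, and the parity and $d$ bookkeeping should confine them to $c\le 3$ and $s\le 4$. The main obstacle is the $d\in\{-1,0,1\}$ region. There the non-solidity of $\lambda$ and positions like $(c,[2,1^{a}])$ or $(c,[3,1^a])$ could break a purely signature-based rule. To handle it I would first try to prove that every position the second player's strategy lands on in that band is solid or is $(c,[1^c])$, reusing the arrows of Diagrams~(A)--(C).
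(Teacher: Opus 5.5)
Your plan has two genuine gaps. First, the strategy for the zero case with $n+m$ even and $d\ge 2$ fails. The claim that ``$c=1$ cannot occur because of parity'' is false. Every position with $d=2$ has $c+s=2c+2$, which is even for all $c$. Your rule of always answering with a split lowers $c$ by one whenever the first player dehubs, so nothing controls the parity of $c$.

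Concretely, take $W_{2,6}$ (even sum, $d=4$). From $(2,\Lambda^6)$ the first player dehubs to $(1,\Lambda^6)$. Your rule forces the only split, $(1,[5])$, and the first player then splits to $(1,[3,1])$. This is a $P$-position: its options $(0,[3,1])$, $(1,[3])$, $(1,[2,1])$, $(1,[1,1,1])$ all have nim-value $1$. So the second player loses; the correct reply at $(1,\Lambda^6)$ was the dehub to $(0,\Lambda^6)$.

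Likewise, Lemma~\ref{lemma:GeneralizedWheel2n} needs a partition with \emph{no} odd parts, not at most two; for example, $(2,[3,1])$ has nim-value $1$. The missing idea is the invariant of the paper's Diagram~(E). After the first round, answer a dehub with a dehub, so $c$ stays even and play reaches $c=2$ rather than $c=1$. Answer a split with a split that restores a partition with only even parts. The exits are $(2,\lambda_0^{2+2k})$ and $(e,\bm{\lambda}_0^{e+2})$ with $e\ge 4$.

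Second, the mex argument for the nonzero cases needs the value of the split option $(m,[n-1])$, which is never a starting position. The hypothesis you propose for the simultaneous induction is false: the signature formula for all $(c,\lambda)$, with exceptions confined to $c\le 3$, $s\le 4$. For instance, $(c,[1^c])$ has even sum and $d=0$, so the formula predicts $1$, but it has nim-value $0$ for every $c\ge 1$ by Lemma~\ref{lemma:AllOnes}.

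In particular, Case~5 rests on $\nim(m,[m])=1$, which you only assert. The paper proves it with an explicit second-player strategy for $(m,[m])+*1$ (Diagram~(G), finishing in Diagram~(A)). Inside your framework, this could be repaired by induction on $c\ge 3$, starting from Lemma~\ref{lem:SPL33}:
\begin{itemize}
\item the split $(c,[c-1])$ is $0$ by Diagram~(F);
\item every other split $(c,[a,b])$ is $0$, by Diagram~(F), or by the $\lambda_2$ branch of Diagram~(G) when $a,b$ are both odd;
\item $(c-1,[c])$ is not $1$, because it has the option $(c-1,[c-1])$, which has value $1$ by induction.
\end{itemize}

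You would still need a concrete strategy for the $d=-1$ boundary. The paper's Diagram~(F) does this: after a dehub, split so that $d=-1$ again with at most one odd part, exiting at $(3,[2])$. ``Steer toward $(c,[1^c])$'' does not describe such a strategy. Finally, $d\equiv n+m \pmod 2$, so your subcase ``odd sum with $d=2$'' cannot occur.
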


\begin{proof}
It is sufficient to prove the corresponding result for $\SPL(m,n)$ due to the option-preserving map $f$. We consider the five cases of our formula separately. In each case we show that the second player can win.


{\bf Case 1.} $\pty(n+m)=0$ and $m+2\le n$:
The second player's goal is to keep the unselected central and rim vertices even in a careful way until there are only two central vertices left or there are exactly two more rim than central vertices. Figure~\ref{fig:Case1Phase1BeginningNew} shows a case analysis diagram for a winning strategy. Position $(2,\lambda_0^{2+2k})$ is losing by Lemma~\ref{lemma:GeneralizedWheel2n}. Lemma~\ref{lem:solidIfBig} implies that ${\bm\lambda_0^{e+2}}$ is solid, so $(e,{\bm\lambda}_0^{e+2})$ is losing by Lemma~\ref{lem:solidStrat}. 

\begin{figure}[h]
\begin{tikzpicture}[yscale=0.2,xscale=0.67]
\node (00) at (-17,21) {$(o,\Lambda^{o+2k}_{1})$};
\node at (00.170) {$\cc{1}$};
\node (a1) at (-23,18) {$(o-1,\Lambda^{o+2k})$};
\node (a2) at (-12,18) {$(o,\lambda_0^{e+2k})$};
\node (a4) at (-17,10) {$(e,\lambda_0^{e+2l})$};
\node at (a4.160) {$\cc{3}$};

\node (b2) at (-17,3) {$(e,\lambda_0^{e+2l-1})$};
\node (c1) at (-17,16) {$(2,\lambda^{2+2k}_0)$};
\node at (c1.160) {$\cc{2}$};
\node at (c1.-90) {\tiny (B)};
\node (c2) at (-17,-4) {$(e,{\bm\lambda}_0^{e+2})$};
\node at (c2.170) {$\cc{4}$};
\node at (c2.-90) {\tiny (C.1)};

\draw[->,blue] (00) -- (a1);
\draw[->,orange] (00) -- (a2);
\draw[->>,orange] (a1) to[bend right=30] (a4);

\draw[->,blue] (a4) to[bend right=6] (a2);
\draw[->>,blue] (a2) to[bend right=6] (a4);
\draw[->,orange] (a4) to[bend left=4] (b2);
\draw[->>,orange] (b2) to[bend left=4] (a4);

\draw[->>,dashed,orange] (b2) -- (c2);
\draw[->>,dashed,orange] (a1) -- (c1);
\draw[->>,dashed,orange] (a1) to[bend right=15] (c2);
\draw[->>,dashed,blue] (a2) to (c1);
\draw[->>,dashed,blue] (a2) to[bend left=15] (c2);
\end{tikzpicture}

\caption{
\label{fig:Case1Phase1BeginningNew}
Case analysis diagram~(E) for Case~1 with odd $o \geq 3$, even $e \geq 4$, $l \geq 2$, and $k \geq 1$.
}
\end{figure}


{\bf Case 2.} $\pty(n+m)=1$ and $m>n$: 
If $m\geq n+3$, then the second player wins by Lemma~\ref{lem:SmallPartition}. Otherwise, $m=n+1$ and the second player wins using the strategy shown in Figure~\ref{fig:Case2}.

\begin{figure}[h]

\begin{tikzpicture}[yscale=.5,xscale=3.3]
\node (b1) at (1,2) {$(c+1,\lambda_{\pty(c)}^{c})$};
\node at (b1.170) {$\cc{1}$};
\node (b2) at (3,0) {$(c,\lambda^{c-3})$};
\node at (b2.-90) {\tiny (D)};
\node (c1) at (0,1) {$(c,\lambda_{\pty(c)}^{c})$};
\node (c2) at (2,1) {$(c,\lambda^{c-2})$};
\draw[->,orange] (b1) -- (c2);
\draw[->,blue] (b1) to[bend right=15] (c1);
\draw[->>,orange] (c1) to[bend right=15] (b1);
\draw[->>,orange] (c2) -- (b2);
\node (e1) at (1,0) {$(3,[2])$};
\node at (e1.-90) {\tiny (A.2)};
\draw[dashed,->>,orange] (c1) -- (e1);
\end{tikzpicture}

\caption{
\label{fig:Case2}
Case analysis diagram (F) for the endgame of Case 2 with $c\ge 3$.
}
\end{figure}


{\bf Case 3.} $\pty(n+m)=0$ and $m\geq n \geq 4$: 
We show that the second player can win $\SPL(m,n)+*1$.   If the first player splits or makes a move in $*1$, then the second player can move to $(m,[n-1])$.  Since $m \geq n-1$ and $m+(n-1)$ is odd, this is a losing position by Case~2. 

So we may assume that the first player initially moves to $(m-1,\Lambda^n)+*1$.  If $m \geq n+2$, then the second player can move to $(m-1,\Lambda^n)$ and follow the strategy from Case~2.  

So we may also assume that $m=n$ and the first player moves to $(n-1,\Lambda^n)+*1$.  Then the second player can move to $(n-1,[n-1])+*1$ and win following the strategy in Figure~\ref{fig:Case3nnEndgame2}.
The continuation from $(3,[3])+*1$ is shown in Figure~\ref{fig:Case3nnEndgame3}. Positions $(o,[o-1])$, $(e,\lambda^{e-1}_1)$, and $(o,\lambda^{o-3})$ are losing as shown in Figure~\ref{fig:Case2}.

\begin{figure}[h]
\begin{tikzpicture}[yscale=0.37,xscale=0.75]
\node (01) at (0,8) {$(o,[o])+*1$};
\node[draw,circle,red,inner sep=.7] at (01.170) {$\scriptscriptstyle 1$};
\node (11) at (-3,5) {$(o-1,[o])+*1$};
\node (12) at (0.5,5) {$(o,[o])$};
\node (13) at (4,5) {$(o,\lambda^{o-1}_0)+*1$};
\node (14) at (8,5) {$(o,\lambda^{o-1}_2)+*1$};
\draw[->] (01) -- (11);
\draw[->] (01) -- (12);
\draw[->] (01) -- (13);
\draw[->] (01) -- (14);
\node (21) at (-6,2) {$(e,[e])+*1$};
\node[draw,circle,red,inner sep=.7] at (21.170) {$\scriptscriptstyle 2$};
\node (22) at (0.5,2) {$(o,[o-1])$};
\node at (22.-90) {\tiny (F.1)};
\node (23) at (4,2) {$(o,\lambda^{o-1}_o)$};
\node (24) at (8,2) {$(o,\lambda^{o-1}_2)$};
\draw[->>] (11) -- (21);
\draw[->>] (12) -- (22);
\draw[->>] (13) -- (23);
\draw[->>] (14) -- (24);
\node (31) at (-9.5,-2) {$(e-1,[e])+*1$};
\node (32) at (-5,-2) {$(e,[e])$};
\node (33) at (-1,-2) {$(e,\lambda^{e-1}_1)+*1$};
\node (34) at (3,-2) {$(e,\lambda^{e}_o)$};
\node (35) at (6,-2) {$(e,\lambda^{e}_2)$};
\node (36) at (9,-2) {$(o,\lambda^{o-2})$};
\draw[->] (21) -- (31);
\draw[->] (21) -- (32);
\draw[->] (21) -- (33);
\draw[->] (23) -- (34);
\draw[->] (23) -- (36);
\draw[->] (23) -- (36);
\draw[->] (24) -- (35);
\draw[->] (24) -- (36);
\draw[->>] (31) to[bend left=40] (01);
\node (41) at (-9.5,-6) {$(3,[3])+*1$};
\node at (41.-90) {\tiny (A.1)};
\node (42) at (-1,-6) {$(e,\lambda^{e-1}_1)$};
\node at (42.-90) {\tiny (F.1)};
\node (43) at (9,-6) {$(o,\lambda^{o-3})$};
\node at (43.-90) {\tiny (D.1)};
\draw[dashed,->>] (31) -- (41);
\draw[->>] (32) -- (42);
\draw[->>] (33) -- (42);
\draw[->>] (34) -- (42);
\draw[->>] (35) -- (42);
\draw[->>] (36) -- (43);
\end{tikzpicture}

\caption{
\label{fig:Case3nnEndgame2}
Case analysis diagram (G) for the endgame of Case 3 starting at either $(o,[o])+*1$ or $(e,[e])+*1$ for some odd $o \geq 5$ or even $e \geq 4$.  
}
\end{figure}

{\bf Case 4.}  $\pty(n+m)=1$ and $m+2 \leq n$:  
We will show that the second player can win $\SPL(m,n)+*1$. First suppose that $m=2$ and the first player initially moves to $(1,\Lambda^n)+*1$.  The second player then can move in $*1$ to get to $(0,[n-1])$ at the fourth move of the game.  Since $n \geq m+2=4$ and $n$ is odd, $n-1$ is even and at least $4$. Hence the second player can win by Lemma~\ref{lemma:GeneralizedWheel2n}.

Now suppose that  $m>2$ or the first player's initial selection is a split.  The second player can move to either $(m-1,\Lambda^n)$ where $m-1 \geq 2$ or $(m,[n-1])$ where $m \geq 2$. The second player now wins as in Case~1 since $\pty(n+m-1)=0$.

{\bf Case 5.} $m+1=n$: 
The options of the starting position of $\SPL(m,m+1)$ are $(m-1,\Lambda^{m+1})$ and $(m,[m])$.  The former is isomorphic to $\SPL(m-1,m+1)$ and has nim-value $*0$ by Case~1 since $(m-1)+(m+1)=2m$ is even and $m \geq 3$.  The latter has nim-value $*1$ by Figure~\ref{fig:Case3nnEndgame2}.

Therefore, the nim-value for Case~5 is $2$.
\end{proof}

Recall that the case $W_{m,3}$ was handled in Proposition~\ref{prop:GeneralizedWheel3}.

\section{Further directions}\label{sec:Closing}

\begin{enumerate}
\item 
Determine whether Conjecture~\ref{conj:oddCyclic} is true.  The difficulty is that there is no obvious pairing strategy for the second player due to there being an odd number of vertices.
\item   
Sometimes the geodetic closure agrees with the convex hull, but not always. For example, they are not the same on $Q_n$ if $n\ge 3$.   What are the nim-values for the variations using geodetic closures instead of convex hulls?  
\item
The nim-value of an arbitrary hypergraph game can be any integer as shown in~\cite{SiebenHypergraph}. The available nim-values for generating groups is much more limited \cite{BeneshErnstSiebenGENSpectrum}. What is the spectrum of nim-values for each of the convex hull removing games?
\item 
One can study the convex hull hypergraph games on other finite combinatorial objects where there is a natural notion of geodesic.  The objects include hypergraphs, weighted graphs, and directed graphs such as the Hasse diagram of a poset and the Cayley digraph for a group.
\item
A case analysis diagram is essentially a quotient of a winning strategy, which is a pruned game digraph containing only the winning moves of the winner. It might be beneficial to develop a formal theory of these quotients similar to~\cite{dMorphism,Baltushkin,Basic}. 
\end{enumerate}

\section*{Acknowledgments}

This material is based upon work supported by the National Science Foundation under
Grant No.~DMS-1929284 while the authors were in residence at the Institute for Computational and Experimental Research in Mathematics (ICERM) in Providence, RI, via the Collaborate@ICERM program.

\bibliographystyle{plain}
\bibliography{game}

\newpage

\end{document}